\newtheorem{theorem}{Theorem}[section]
\newtheorem{lemma}[theorem]{Lemma}
\newtheorem{definition}[theorem]{Definition}
\newtheorem{proposition}[theorem]{Proposition}
\newcommand{\qed}{\rule{1mm}{3mm}}     
\newenvironment{proof}{\vspace*{\parsep}\noindent {\bf proof:}}{\qed\\[1em]}
\begin{document}

\title{On the Cauchy Completeness of the Constructive Cauchy Reals}

\author{Robert S. Lubarsky \\ Dept. of Mathematical Sciences
\\ Florida Atlantic University \\ Boca Raton, FL 33431 \\
Robert.Lubarsky@alum.mit.edu}
\maketitle

\begin{abstract}
It is consistent with constructive set theory (without Countable
Choice, clearly) that the Cauchy reals (equivalence classes of
Cauchy sequences of rationals) are not Cauchy complete. Related
results are also shown, such as that a Cauchy sequence of
rationals may not have a modulus of convergence, and that a Cauchy
sequence of Cauchy sequences may not converge to a Cauchy
sequence, among others.
\end{abstract}

\section{Introduction}
Are the reals Cauchy complete? This is, for every Cauchy sequence
of real numbers, is there a real number which is its limit?

This sounds as though the answer should be ``of course". After
all, the reals are defined pretty much to make this true. The
reason to move from the rationals to the reals is exactly to
``fill in the holes" that the rationals have. So however you
define ${\mathbb R}$, you'd think its Cauchy completeness would be
immediate. At the very least, this property would be a litmus test
for any proffered definition.

In fact, for the two most common notions of real number, Dedekind
and Cauchy real, this is indeed the case, under classical logic. First off,
classically Cauchy and Dedekind reals are equivalent anyway. Then,
taking a real as an equivalence class of Cauchy sequences,
given any Cauchy sequence of reals, a canonical representative can
be chosen from each real, and a limit real can be built from them
by a kind of diagonalization, all pretty easily.

Constructively, though, this whole procedure breaks down. Starting
even at the beginning, Cauchy and Dedekind reals are no longer
equivalent notions (see \cite {FH} or \cite{LR}). While the
Dedekind reals are complete, working with the Cauchy reals, it's
not clear that a representative can be chosen from each
equivalence class, or, even if you could, that a limit could be
built from them by any means.

It is the purpose of this paper to show that, indeed, such
constructions are not in general possible, answering a question of
Martin Escardo and Alex Simpson \cite{ES}.

While the original motivation of this work was to show the final
theorem, that the Cauchy reals are not provably Cauchy complete,
it is instructive to lay out the framework and examine the related
questions so laid bare. A Cauchy real is understood as an
equivalence class of Cauchy sequences of rationals. When working
with a Cauchy sequence, one usually needs to know not only that
the sequence converges, but also how fast. In classical set
theory, this is definable from the sequence itself, and so is not
problematic. The same cannot be said for other contexts. For
instance, in recursion theory, the complexity of the convergence
rate might be important. In our context, constructive (a.k.a.
intuitionistic) set theory, the standard way to define a modulus
of convergence just doesn't work. Certainly given a Cauchy
sequence $X(n)$ and positive rational $\epsilon$, there is an
integer $N$ such that for $m, n > N \; \mid X(n) - X(m) \mid <
\epsilon$: that's the definition of a Cauchy sequence. A modulus
of convergence is a function $f$ such that for any such $\epsilon$
$f$ on $\epsilon$ returns such an $N$. Classically this is easy:
let $f$ return the least such $N$. Intuitionistically that won't
work. And there's no obvious alternative. So a real is taken to be
an equivalence class of pairs $\langle X, f \rangle$, where $X$ is
a Cauchy sequence and $f$ a modulus of convergence.

One immediate source of confusion here is identifying reals with
sequence-modulus pairs. A real is an equivalence class of such
pairs, and it is not obvious how a representative can be chosen
constructively from each real; in fact, this cannot in general be
done, as we shall see. This distinction has not always be made
though. For instance, as observed by Fred Richman, in \cite{TvD},
the Cauchy completeness of the reals was stated as a theorem, but
what was proved was the Cauchy completeness of sequence-modulus
pairs. To be precise, what was shown was that, given a countable sequence,
with its own modulus of convergence, of sequence-modulus pairs,
then there is a limit sequence, with modulus. For that matter, it
is not hard (and left to the reader) that, even if the given
sequence does not come
equipped with its own modulus, it still has a Cauchy sequence as a
limit, although we will have to punt on the limit having a
modulus. But neither of those two observations is the Cauchy
completeness of the reals.

Nonetheless, these observations open up the topic about what kinds
of behavior in the limit one can expect given certain input data.
There are two, independent parameters. Does the outside Cauchy
sequence have a modulus of convergence? And are its individual
members sequence-modulus pairs, or merely naked sequences?
Notice that, while the first question is
yes-no, the second has a middle option: the sequences have
moduli, but not uniformly.
Perhaps each entry in the big sequence is simply a Cauchy sequence
of rationals, and it is hypothesized
to have a modulus somewhere, with no information about the modulus given.
These possibilities are all summarized in the following
table.
\begin{center} Entries in the outside Cauchy sequence are:
\begin{tabular}{c|c|c|c}
  & seq-mod pairs &  seqs that have mods & seqs
  that may not \\
  & & somewhere & have mods anywhere\\
  \hline
 Outside seq & & &\\
 has a mod &  &  &  \\
 \hline
 Outside seq & & &\\
 doesn't & & &\\
 have a mod &  &  &  \\
\end{tabular}\\
\end{center}

Not to put the cart before the
horse, is it even possible to have a Cauchy sequence with no
modulus of convergence, or with only
non-uniform moduli? It has already been observed that the
obvious classical definition of such does not work
intuitionistically, but it still remains to be shown that no such
definition is possible.

The goal of this paper is to prove the negative results as much as
possible, that any given hypothesis does not show that there is a
limit Cauchy sequence, or in certain cases that there is no limit
with a modulus.

The positive results are all easy enough and so are left as
exercises. They are:

\begin{enumerate}
\item Every Cauchy sequence with modulus of sequence-modulus pairs
has a limit sequence with modulus.

\item Every Cauchy sequence of sequence-modulus pairs has a limit
sequence.

\item Every Cauchy sequence with modulus of Cauchy sequences has a
limit sequence.

\end{enumerate}

In tabular form, the positive results are:
\begin{center} Entries in the outside Cauchy sequence are:\\

\begin{tabular}{c|c|c|c}
  & seq-mod pairs &  seqs that have mods & seqs
  that may not \\
  & & somewhere & have mods anywhere\\
  \hline
 Outside seq &There is a &There is a &There is a\\
 has a mod &limit with mod.  &limit sequence.  & limit sequence. \\
 \hline
 Outside seq & & & \\
 doesn't &There is a &  & \\
 have a mod & limit sequence. &   & \\
\end{tabular}\\
\end{center}

Regarding the first and last columns, the negative results are
that these are the positive results cited above are the best
possible. In detail:\\

{\bf Theorem \ref{thm1}}
{\it IZF$_{Ref}$ does not prove that every Cauchy sequence has a modulus of
convergence. It follows that IZF$_{Ref}$ does not prove that
every Cauchy sequence of
sequence-modulus pairs converges to a Cauchy sequence with a
modulus of convergence.}\\

{\bf Theorem \ref{thm2}}
{\it IZF$_{Ref}$ does not prove that every Cauchy sequence of Cauchy
sequences converges to a Cauchy sequence.}\\

{\bf Theorem \ref{thm3}}
{\it IZF$_{Ref}$ does not prove that every Cauchy sequence with a modulus
of convergence of Cauchy sequences converges to a Cauchy sequence
with a modulus of convergence.}\\

The middle column is discussed briefly in the questions in the
last section of this paper.

In tabular form, these negative results are:

\begin{center} Entries in the outside Cauchy sequence are:
\nopagebreak

\begin{tabular}{c|c|c|c}
  & seq-mod pairs &  seqs that have mods & seqs
  that may not \\
  & & somewhere & have mods anywhere\\
  \hline
 Outside seq & &See questions, &Limit may not have\\
 has a mod &  &section 7.  & a mod. thm \ref{thm3} \\
 \hline
 Outside seq &Limit may not & &There may not \\
 doesn't &have a mod. &See questions, & even be a limit.\\
 have a mod & thm \ref{thm1} & section 7. & thm \ref{thm2}\\
\end{tabular}\\
\end{center}

Then there is the major negative result, the original and ultimate
motivation of this work:\\

{\bf Theorem \ref{thm4}}
{\it IZF$_{Ref}$ does not prove that every Cauchy sequence of reals
has a limit.}\\

Recalling that a real is here taken as an equivalence class of
sequence-modulus pairs, to prove this result it would suffice to
construct a Cauchy sequence (perhaps itself without modulus) of
reals, with no sequence-modulus pair as a limit. We will do a tad
better, constructing a Cauchy sequence, with modulus, of reals, with
no Cauchy sequence, even without modulus, as a limit.

At this point a word about the meta-theory is in order. The
results here are stated as non-theorems of IZF$_{Ref}$, which is
the variant of IZF in which the Collection schema is replaced by
the Reflection schema. The point is that these independence
results are not meant to be based on a weakness of the underlying
set theory. Hence the set theory taken is the strongest version of
the intuitionistic theories commonly considered. The results would
remain valid if IZF were augmented by yet stronger hypotheses,
such as large cardinals. Of course, these remarks do not apply if
IZF is augmented by whatever choice principle would be enough to
build the sequences and moduli here claimed not to exist. Clearly
Dependent Choice is strong enough for everything at issue here:
choosing representatives from equivalence classes, building
moduli, constructing Cauchy sequences. The question what weaker
choice principle/s would suffice is addressed in \cite{LRi}.

Regarding the methodology, counter-examples are constructed in
each case. These examples could be presented as either topological
or Kripke models. While certain relations among topological and
Kripke models are known, it is not clear to the author that the
natural models in the cases before us are really the same. While
the family resemblance is unmistakable (compare sections 2 and 3
below), it is, for example, at best nonobvious where the
non-standard integers in the Kripke models are hidden in the
topological ones. A better understanding of the relations among
Kripke and topological models would be a worthwhile project for
some other time. For now, we would like to present the reader with
adequate information without being long-winded. Hence all of the
constructions will be presented as topological models, since there
is better technology for dealing with them. In particular, there
is already a meta-theorem (see \cite{G}) that the (full) model
over any Heyting algebra models IZF (easily, IZF$_{Ref}$ too). So
we will never have to prove that our topological models satisfy
IZF$_{Ref}$. In contrast, we know of no such meta-theorem that
would apply to the Kripke models in question. In the simplest
case, the first theorem, the Kripke model will also be given, so
the reader can see what's going on there. But even a cursory
glance at that argument should make it clear why the author does
not want to repeat the proofs of IZF$_{Ref}$ and all the auxiliary
lemmas, and the reader likely does not want to read them, three
more times.

One last word about notation/terminology. For p an open set in a
topological space and $\phi$ a formula in set theory (possibly
with parameters from the topological model), ``$p \subseteq \|
\phi \|", ``p \Vdash \phi$", and ``$p$ forces $\phi$" all mean the
same thing. Also, ``WLOG" stands for ``without loss of
generality".

\section{Not every Cauchy sequence has a modulus of convergence}

\begin{theorem}
\label{thm1} IZF$_{Ref}$ does not prove that every Cauchy sequence
has a modulus of convergence. It follows that IZF$_{Ref}$ does not
prove that every Cauchy sequence of sequence-modulus pairs
converges to a Cauchy sequence with a modulus of convergence.
\end{theorem}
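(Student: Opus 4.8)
The plan is to build a topological (Heyting-valued) model of IZF$_{Ref}$ containing a specific Cauchy sequence of rationals for which no modulus of convergence can be forced to exist. Since the meta-theorem from \cite{G} guarantees that the full model over any Heyting algebra satisfies IZF$_{Ref}$, the only real work is choosing the right space and verifying the two semantic facts: (i) the generic object is (forced to be) a Cauchy sequence of rationals, and (ii) no ground-model or generic function can be forced to be a modulus for it.

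The natural space to use is one whose points encode, for each precision $\epsilon = 2^{-k}$, an as-yet-undetermined threshold $N$ past which the sequence has settled — but arranged so that this threshold can always be pushed further out. Concretely, I would take points to be nondecreasing sequences (or, equivalently, a single increasing function) $g:\mathbb{N}\to\mathbb{N}$ recording ``the sequence is within $2^{-k}$ from stage $g(k)$ on,'' with basic opens determined by finite initial conditions of the form ``$g(k) \le n_k$ is not yet decided'' — i.e.\ opens that constrain $g$ only on finitely many arguments and only from below, leaving room to increase any value. From a point $g$ one reads off a Cauchy sequence $X_g$ of rationals: start at $0$, and at stage $g(k)$ make a jump of size $2^{-k}$ (in whatever direction a second generic coordinate, or a fixed rule, dictates), so that $X_g$ is automatically Cauchy with the $g(k)$ serving as a modulus \emph{for that point}. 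The key is that no open set pins down the values $g(k)$, because any basic open contains points with arbitrarily large $g(k)$.

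The main steps, in order: first, define the space and its topology, and define the generic Cauchy sequence $X$ as an element of the topological model (a term for a function $\mathbb{N}\to\mathbb{Q}$), checking that every open set forces $X$ to be a Cauchy sequence of rationals — this uses that along any point the jumps shrink geometrically, and the ``Cauchy'' statement $\forall \epsilon\,\exists N\,\forall m,n>N\;|X(n)-X(m)|<\epsilon$ is forced because on each point it is literally true. Second, and this is the crux, show no modulus is forced: suppose $p \Vdash ``f \text{ is a modulus for } X"$; in particular $p$ decides, or is refined to decide, the value $f(k)=N$ for some fixed $k$; but within $p$ there are points whose actual settling-stage $g(k)$ exceeds $N$, so below some $q\le p$ we can force $|X(g(k)) - X(g(k)+1)| = 2^{-k} \not< 2^{-k}$... more carefully, force a genuine jump of size $\ge 2^{-k}$ strictly after stage $N$, contradicting that $N$ is a valid threshold for $\epsilon = 2^{-k}$. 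Hence $\|\exists f\,(f \text{ is a modulus for } X)\| = \emptyset$ while $\|X \text{ is a Cauchy sequence}\| = $ the whole space, which proves the first assertion. Third, the ``it follows that'' clause: a sequence-modulus pair $\langle X(n), f_n\rangle$ can be taken with each $X(n)$ the constant-$0$ sequence except... actually more simply, regard our $X$ itself as the limit question — present the naked Cauchy sequence $X$ as (the limit of) a Cauchy sequence of sequence-modulus pairs by letting the $n$-th pair be the eventually-constant rational sequence $\langle X(0),\dots,X(n),X(n),\dots\rangle$ with its trivial modulus; this outer sequence converges to $X$, and any modulus for the limit would, by the positive result (1), have yielded a modulus for $X$, which we have just shown cannot be forced.

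The hard part will be the crux step: engineering the space so that the generic sequence is \emph{provably, in the model} Cauchy (so that asking for its modulus is a legitimate question) while simultaneously no open condition can bound the modulus. The tension is that forcing ``$X$ is Cauchy'' seems to want the convergence behavior pinned down, yet forcing ``no modulus'' wants it free; the resolution is that Cauchyness is a $\forall\exists\forall$ statement true \emph{pointwise} on every point of the space (so forced on every open), whereas a modulus is a single \emph{function} that would have to work across all points of some open set at once, and the openness/finiteness of conditions prevents any such uniform choice. Getting the topology's basic opens exactly right — finitely many lower constraints, freely extendable upward — so that both halves go through, and then routinely checking that the relevant formulas are decided the way one wants (appealing freely to \cite{G} for the IZF$_{Ref}$ axioms themselves), is where the care is needed; everything else is bookkeeping of the kind the paper defers as routine.
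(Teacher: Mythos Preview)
Your reduction of the second assertion to the first (embed the naked Cauchy sequence as a sequence of eventually-constant sequence--modulus pairs) is correct and is exactly what the paper does.

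The gap is in the topological model itself. With opens that constrain $g$ ``only on finitely many arguments and only from below,'' the value $X_g(n)=\sum_{k:\,g(k)\le n}\pm 2^{-k}$ is never determined: lower bounds on finitely many $g(k)$ cannot decide, for the remaining $k$, whether $g(k)\le n$, so no open set forces $X(n)$ to equal a specific rational and $X$ fails even to be a total function $\mathbb{N}\to\mathbb{Q}$ in the model. If instead you let opens fix an initial segment of $g$ exactly (so that $X(n)$ \emph{is} locally determined), then the generic $g$ is itself a total function in the model and is visibly a modulus for $X$, which is precisely what you want to exclude. You correctly name the tension but the proposed topology does not resolve it; nor does the slogan ``Cauchyness is forced because it is true at each point'' --- forcing $\exists N\,\psi(N)$ requires an open cover on each piece of which a \emph{single} $N$ works uniformly, not merely a pointwise witness.

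The paper's resolution, which your sketch is missing, is to take the points to be the Cauchy sequences themselves and to let a basic open $(p,I)$ carry both a finite initial segment $p$ \emph{and} an open interval $I$ in which all further terms and the limit must lie. The interval is what forces Cauchyness locally (once $|I|<\epsilon$, take $N=\mathrm{length}(p)$), while remaining loose enough that no modulus is pinned down. The ``no modulus'' step is then far from a one-liner: after some $(q,J)\subseteq(p,I)$ forces $f(n)=m$, one cannot simply ``push the threshold past $N$''; instead one must march the interval $J$ across $I$ in overlapping stages, arguing at each stage that the forced value $f(n)=m$ persists, until a condition is reached beneath which two entries beyond index $m$ can be placed more than $1/n$ apart. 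That marching argument --- and the interval datum in the opens that makes it possible --- is the substantive content of the proof, and nothing in your setup supplies a substitute for it.
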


The second assertion follows immediately from the first: given a
Cauchy sequence $X(n)$, for each $n$ let $X_n$ be the constant
sequence $X(n)$ paired with some modulus of convergence
independent of $n$. Sending $X$ to the sequence $\langle X_n \mid
n \in {\bf N} \rangle$ embeds the Cauchy reals into Cauchy
sequences of sequence-modulus pairs. If provably every one of the
latter had a modulus, so would each of the former.

To prove the first assertion, we will build a topological model
with a specific Cauchy sequence $Z(n)$ of rationals with no
modulus of convergence.

The topological space $T$ consists of all Cauchy sequences of
rationals.

A basic open set is given by $(p, I)$, where $p$ is a finite
sequence of rationals and $I$ is an open interval. A Cauchy
sequence $X$ is in (the open set determined by) $(p, I)$ if $p
\subseteq X$, rng($X \backslash p) \subseteq I$, and lim($X$) $\in
I$. (Notice that, under this representation, the whole space $T$
is given by ($\emptyset, {\mathbb R}$), and the empty set is given
by $(p, \emptyset)$ for any $p$.)

For this to generate a topology, it suffices to show that the
basic open sets are closed under intersection. Given $(p, I)$ and
$(q, J)$, if $p$ and $q$ are not compatible (i.e. neither is an
extension of the other), then $(p, I) \cap (q, J) = \emptyset$.
Otherwise WLOG let $q \supseteq p$. If rng($q \backslash p) \not
\subseteq I$ then again $(p, I) \cap (q, J) = \emptyset$.
Otherwise $(p, I) \cap (q, J) = (p \cup q, I \cap J) = (q, I \cap
J)$.

Let $M$ be the Heyting-valued models based on $T$, as describes in
e.g. \cite{G}. Briefly, a set in $M$ is a collection of objects of
the form $\langle \sigma, (p, I) \rangle$, where $\sigma$
inductively is a set in $M$. It is shown in \cite{G} that $M
\models$ IZF$_{Coll}$ (assuming IZF$_{Coll}$ in the meta-theory).
Similarly, assuming IZF$_{Ref}$ in the meta-theory yields $M$
$\models$ IZF$_{Ref}$.

We are interested in the term $\{ \langle {\bar p}, (p, I) \rangle
\mid (p, I)$ is an open set$\}$. (Here ${\bar p}$ is the canonical
name for $p$. Each set in $V$ has a canonical name in $M$ by
choosing $(p, I)$ to be ($\emptyset, {\mathbb R}$) hereditarily:
${\bar x} = \{ \langle {\bar y}, (\emptyset, {\mathbb R}) \rangle
\mid y \in x \}$.) We will call this term $Z$.

\begin{proposition}
$\|$ Z is a Cauchy sequence $\|$ = T.
\end{proposition}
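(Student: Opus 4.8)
The plan is to verify directly, inside the Heyting-valued model $M$ over $T$, that every basic open set $(p,I)$ forces both ``$Z$ is a function from $\mathbb N$ to $\mathbb Q$'' and ``$Z$ is Cauchy''; since $\|\cdot\|$ of a formula is always an open set and the basic opens generate the topology, this yields $\|Z$ is a Cauchy sequence$\|=T$. The computation I will lean on throughout is the evaluation of the forcing relation for $Z$: unwinding the definition of the term, for a basic open $(p,I)$, a natural number $n$, and a rational $q$ one has $(p,I)\Vdash Z(n)=\bar q$ exactly when $|p|>n$ and $p(n)=q$, and more generally $\|Z(n)=\bar q\|$ is the union of all $(p,I)$ with $|p|>n$ and $p(n)=q$. (Throughout, $Z$ is read as the generic Cauchy sequence $\bigcup\{p:p\in Z\}$ that the term determines, which is what the statement intends.)

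First I would show $Z$ is forced to be a total, single-valued function with domain $\mathbb N$ and values in $\mathbb Q$. Single-valuedness is immediate: a nonempty open set meeting both $\|Z(n)=\bar q\|$ and $\|Z(n)=\bar q'\|$ would contain a point $X\in T$ with $q=p(n)=X(n)=p'(n)=q'$. For totality at index $n$ I would show that the basic opens $(p,I)$ with $|p|>n$ cover $T$: given a Cauchy sequence $X\in T$, put $p=X{\upharpoonright}(n+1)$ and let $I$ be any bounded open interval containing $\operatorname{rng}(X\setminus p)\cup\{\lim X\}$ (possible since a Cauchy sequence is bounded); then $X\in(p,I)$ with $|p|=n+1>n$. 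Hence $\|n\in\operatorname{dom}(Z)\|=T$ for every $n$, and since every ordered pair occurring in a finite sequence $p$ has first coordinate $<|p|\in\mathbb N$ and second coordinate in $\mathbb Q$, $Z$ is forced to be a function from $\mathbb N$ to $\mathbb Q$.

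The substantive step is that $Z$ is forced Cauchy. Fix a basic open $(p,I)$ and a positive rational $\epsilon$; the goal is $(p,I)\Vdash\exists N\,\forall m,n>N\ |Z(m)-Z(n)|<\epsilon$. The key point is that one cannot merely shrink $I$: for a generic $X\in(p,I)$ the set $\operatorname{rng}(X\setminus p)$ may be spread across all of $I$, so no single short subinterval works for all of $(p,I)$. Instead I would consider the family of basic opens $(p',J)$ with $p'\supseteq p$, $\operatorname{rng}(p'\setminus p)\subseteq I$, and $J\subseteq I$ an open interval of length $<\epsilon$. Each such $(p',J)$ lies inside $(p,I)$ and forces $\forall m,n\ge|p'|\ |Z(m)-Z(n)|<\epsilon$ (any $X\in(p',J)$ has $X(m),X(n)\in J$ once $m,n\ge|p'|$), hence $(p',J)\Vdash\exists N\,\forall m,n>N\ |Z(m)-Z(n)|<\epsilon$ via the witness $N=|p'|$. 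It remains to check that these $(p',J)$ cover $(p,I)$: given $X\in(p,I)$, use that $X$ is Cauchy to choose $N_0\ge|p|$ with $|X(m)-X(n)|<\epsilon/4$ for all $m,n\ge N_0$, and set $p'=X{\upharpoonright}N_0$ and $J=(X(N_0)-\epsilon/3,\ X(N_0)+\epsilon/3)\cap I$; then $p'\supseteq p$, $\operatorname{rng}(p'\setminus p)\subseteq\operatorname{rng}(X\setminus p)\subseteq I$, $J$ is an open interval of length $<\epsilon$ inside $I$, and $J$ contains $\{X(k):k\ge N_0\}\cup\{\lim X\}$, so $X\in(p',J)$. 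Thus $(p,I)$ is the union of these opens, each contained in $\|\exists N\,\forall m,n>N\ |Z(m)-Z(n)|<\epsilon\|$, so $(p,I)$ forces this too. As $(p,I)$ and $\epsilon$ were arbitrary, and any element of $\mathbb Q$ in $M$ forced to be positive agrees, on a suitable cover, with a standard positive rational $\bar q$, we conclude $\|Z$ is Cauchy$\|=T$; together with the previous paragraph this proves the proposition. Note that the argument yields only the bare Cauchy condition and supplies no modulus of convergence — the impossibility of a modulus for $Z$ being the separate matter the rest of this section addresses.

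The step I expect to be the main obstacle is precisely the covering claim above: the necessity of first passing to long enough finite extensions $p'$ of the stem $p$ before the tail can be confined to a short interval, and then seeing that the resulting family $\{(p',J)\}$ is dense below $(p,I)$. This is where it is used that the points of $T$ are themselves Cauchy sequences, so that every point of $(p,I)$ has arbitrarily long finite approximations with arbitrarily small tail.
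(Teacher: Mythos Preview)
Your proof is correct and follows essentially the same route as the paper's: for each point $X\in T$ and each tolerance, use that $X$ is itself Cauchy to find a stage $N_0$ beyond which $X$ lives in a short interval, and observe that the basic open $(X{\upharpoonright}N_0,\,\text{short interval})$ then forces the Cauchy condition with witness $N_0$. The paper carries this out point-by-point with parameter $1/N$ and handles the endpoint issue by slightly widening the interval, while you work below an arbitrary basic open $(p,I)$ with parameter $\epsilon$ and handle the endpoint issue by intersecting your $\epsilon/3$-ball with $I$; these are the same maneuver, and your version is simply more explicit about the constants.
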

\begin{proof}
To see that $\|Z$ is total$\| = T$, let $N$ be an integer. (Note
that each integer in $M$ can be identified locally with an integer
in $V$. For notational ease, we will identify integers in $M$ and
$V$.) Let $p$ be any sequence of rationals of length $> N$. Then
$(p, {\mathbb R}) \subseteq \| Z(N) = p(N) \| \subseteq \| N \in$
dom($Z) \|$. As $T$ is covered by the open sets of that form, $T$
$\subseteq \| N \in$ dom($Z) \|$. That $Z$ is a function is
similarly easy.

As for $Z$ being Cauchy, again let $N$ be an integer and $X$ be in
$T$. Since $X$ is Cauchy, there is an integer $M$ such that beyond
$M$ $X$ stays within an interval $I$ of size 1/(2$N$). Of course,
$X$'s limit might be an endpoint of $I$. So let $J$ extend $I$ on
either side and still have length less that 1/$N$. Then $X \in (X
\upharpoonright M, J) \subseteq \| \forall m, n > M \mid Z(m) -
Z(n) \mid \; \leq 1/N \|$, making $Z$ ``Cauchy for 1/$N$", to coin
a phrase.
\end{proof}

In order to complete the theorem, we need only prove the following
\begin{proposition}
$\|$Z has no modulus of convergence$\|$ = T.
\end{proposition}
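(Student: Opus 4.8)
The plan is to show that no open set $p = (p_0, I) \neq \emptyset$ can force ``$f$ is a modulus of convergence for $Z$'' for any term $f$. Suppose toward a contradiction that some nonempty basic open set $(p_0, I)$ forces that a given term $f$ is a modulus of convergence. Applying $f$ to the standard rational $1/N$ (for $N$ to be chosen), by the forcing semantics there is a shrinking of $(p_0, I)$ — say $(q, J)$ with $q \supseteq p_0$, $J \subseteq I$ — that decides the value $f(1/N) = M$ for some specific integer $M$. Since $(q,J)$ forces that $M$ is a legitimate response of the modulus, it forces $\forall m,n > M \; |Z(m) - Z(n)| \le 1/N$. The idea is then to exhibit two Cauchy sequences $X, Y$ lying in $(q, J)$ that agree on $q$ but eventually diverge from each other by more than $1/N$ past stage $M$, contradicting the fact that $(q,J)$ forces this bound at \emph{every} point of the open set.

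To build those two sequences: since $J$ is a nonempty open interval, pick two rationals $a, b \in J$ with $|a - b| > 1/N$ — this is possible provided $N$ is chosen large enough that $1/N$ is smaller than the length of $J$. But here is the subtlety: $J$ was produced by the forcing argument \emph{after} $N$ was chosen, so we cannot pick $N$ depending on $J$. The fix is the standard one: first note that $(p_0, I)$ forces ``$f$ is total,'' so it forces $f(1/N)\!\downarrow$ for \emph{every} $N$ simultaneously; choose $N$ with $1/N$ less than the length of $I$ (hence less than the length of any $J \subseteq I$ we later encounter) \emph{before} invoking $f$, and only then descend to $(q, J)$ deciding $f(1/N) = M$. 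Now with $a, b \in J$, $|a-b| > 1/N$, extend $q$ to a finite sequence $q'$ of length $> M$ all of whose new entries are, say, $a$ — choosing $q'$ long enough and its tail constant at some value in $J$ — and then let $X$ continue constantly at $a$ (so $X \in (q', J)$) and let $Y$ continue constantly at $b$ (so $Y \in (q'', J)$ where $q''$ extends $q$ similarly but we must be careful that $X$ and $Y$ agree on a common extension of $q$). A cleaner route: pick $q'$ extending $q$ of length $M+2$ with entries in $J$, then inside $(q', J)$ choose two further extensions, one continuing at $a$ and one at $b$; both lie in $(q', J) \subseteq (q,J)$, and at indices $M+1 < M+2$ (or two indices both $> M$) the two sequences can be made to differ by more than $1/N$. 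Concretely, set $q'$ to have length $M+1$ with last entry $a$, last-but-others in $J$; let $X$ extend $q'$ by constant $a$ and $Y$ extend $q'$ by constant $b$ from index $M+2$ on. Then $X, Y \in (q', J) \subseteq (q,J) = \|f(1/N) = M\| \cap \|\text{$M$ works}\|$, yet $|X(M+2) - Y(M+2)| = |a - b| > 1/N$, so neither $X$ nor $Y$ can satisfy $\forall m,n > M\;|Z(m)-Z(n)| \le 1/N$ — contradiction, since at the point $X$ the term $Z$ evaluates (locally) to $X$ itself.

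I expect the main obstacle to be bookkeeping the order of quantifiers in the forcing argument: making sure $N$ is fixed \emph{before} the open set $J$ is extracted, and making sure the two witness sequences $X$ and $Y$ genuinely lie in a \emph{single} open subset of $(q,J)$ on which the contradictory statement is forced (they must share a common finite stem extending $q$, and that stem must already reach past $M$ so that the divergence happens at indices the modulus claims to control). A secondary point to get right is the translation ``$Z$ evaluated at the point $X \in T$ is $X$'': this is exactly the content of the previous proposition's proof that $(p, \mathbb{R}) \subseteq \|Z(N) = p(N)\|$, so at any sufficiently small neighborhood of $X$, $Z(k)$ is forced equal to $X(k)$; one applies this at $k = M+2$ (and any other relevant indices) to turn the forced bound on $Z$ into an actual numerical bound on $X$, which $X$ was constructed to violate. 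Once these are arranged, the contradiction is immediate and the proposition — hence Theorem \ref{thm1} — follows.
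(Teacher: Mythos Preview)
Your argument has a genuine gap at the parenthetical ``(hence less than the length of any $J \subseteq I$ we later encounter).'' The inequality goes the wrong way: $J \subseteq I$ gives $\mathrm{length}(J) \le \mathrm{length}(I)$, so $1/N < \mathrm{length}(I)$ does \emph{not} imply $1/N < \mathrm{length}(J)$. The interval $J$ produced when you shrink $(p_0,I)$ to decide $f(1/N)=M$ can be arbitrarily short, and then there are no $a,b \in J$ with $|a-b|>1/N$. This is not a bookkeeping issue; it is the whole difficulty. A modulus could, a priori, be forced piecewise on a cover of $(p_0,I)$ by very thin basic opens, each of which is internally consistent with the bound it names.

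The paper's proof confronts exactly this obstacle with an iterative ``marching'' construction. After reaching $(q,J)$ with $(q,J) \Vdash f(n)=m$, one does not try to find far-apart points inside $J$; instead one builds a sequence of conditions $(q_k,J_k)$, each still forcing $f(n)=m$, whose intervals $J_k$ creep toward an endpoint of $I$ (at each step either halving toward $\sup J_k$, or jumping past $\sup J_k$ when a compatible condition on the other side is available). Either the march eventually spans a total distance exceeding $1/n$ within $I$ (possibly after ``turning around'' once), in which case one can append two values $>1/n$ apart past index $m$ and contradict the modulus; or the $q_k$'s converge to a Cauchy sequence $X \in (p_0,I)$, and a neighborhood of $X$ deciding $f(n)$ witnesses that the march should have gone strictly further at some finite stage, a contradiction. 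The point is that preserving ``$f(n)=m$'' while widening the interval is nontrivial and is where the work lies.

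A secondary remark: the contradiction must come from a \emph{single} point of $T$ violating $\forall m,n>M\;|Z(m)-Z(n)|\le 1/N$, not from two points $X,Y$ diverging from each other. Your $Y$ (with consecutive late entries $a$ then $b$) would do this on its own --- but only once you have $a,b \in J$ with $|a-b|>1/N$, which brings you back to the main gap.
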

\begin{proof}
Suppose $(p, I) \subseteq \| f$ is a modulus of convergence for
$Z\|$. WLOG $I$ is a finite interval. Let $n$ be such that 1/$n$
is less than the length of $I$, and let $\epsilon$ be (length($I$)
- 1/$n$)/2.

Let $(q, J) \subseteq (p, I)$ force a value $m$ for $f(n)$. WLOG
length($q$) $> m$, as $q$ could be so extended. If $J=I$, then
$(q, J)$ could be extended simply by extending $q$ with two values
a distance greater than 1/$n$ apart, thereby forcing $f$ not to be
a modulus of convergence. So $J \subset I$. That means either inf
$J$ $>$ inf $I$ or sup $J$ $<$ sup $I$. WLOG assume the latter.
Let mid $J$ be the midpoint of $J$, $q_0$ be $q$ extended by mid
$J$, and $J_0$ be (mid $J$, sup $J$). Then ($q_0, J_0) \subseteq
(q, J)$, and therefore ($q_0, J_0) \subseteq \|f(n) = m\|$.

What ($q_1, J_1$) is depends:

CASE I: There is an open set $K$ containing sup($J_0$) such that
($q_0, K) \subseteq \|f(n) = m\|$. Then let $j_{max}$ be the sup
of the right-hand endpoints (i.e. sups) of all such $K$'s. Let
$q_1$ be $q_0$ extended by sup $J_0$.

\underline{Claim}: ($q_1$, (sup $J_0, j_{max})) \subseteq \|f(n) =
m\|$.

\begin{proof}
Let $j \in$ (sup $J_0, j_{max}$). By hypothesis, there is a $K$
such that ($q_1$, (sup $J_0, j)) \subseteq (q_1, K) \subseteq
\|f(n) = m\|$. Since ($q_1$, (sup $J_0, j_{max}$)) is the union of
the various ($q_1$, (sup $J_0, j$))'s over such $j$'s, the claim
follows.
\end{proof}
Of course, $J_1$ will be (sup $J_0, j_{max}$).

CASE II: Not Case I. Then extend by the midpoint again. That is,
$q_1$ is $q_0$ extended by mid $J_0$, and $J_1$ is (mid $J_0$, sup
$J_0$). Also in this case, ($q_1, J_1) \Vdash f(n) = m$.

Clearly we would like to continue this construction. The only
thing that might be a problem is if the right-hand endpoint of
some $J_k$ equals (or goes beyond!) sup $I$, as we need to stay
beneath $(p, I)$. In fact, as soon as sup($J_k) >$ sup($I$) -
$\epsilon$ (if ever), extend $q_k$ by something within $\epsilon$
of sup $I$, and continue the construction with left and right
reversed. That is, instead of going right, we now go left. This is
called ``turning around".

What happens next depends.

CASE A: We turned around, and after finitely many more steps, some
$J_k$ has its inf under inf($I$) + $\epsilon$. Then extend $q_k$
by something within $\epsilon$ of inf $I$. This explicitly blows
$f$ being a modulus of convergence for $Z$.

CASE B: Not Case A. So past a certain point (either the stage at
which we turned around, or, if none, from the beginning) we're
marching monotonically toward one of $I$'s endpoints, but will
always stay at least $\epsilon$ away. WLOG suppose we didn't turn
around. Then the construction will continue for infinitely many
stages. The $q_k$'s so produced will in the limit be a (monotonic
and bounded, hence) Cauchy sequence $X$. Furthermore, lim $X$ is
the limit of the sup($J_k$)'s. Finally, $X \in (p, I)$. Hence
there is an open set $(q', K)$ with $X \in (q', K) \subseteq
\|f(n) = m'\|$, for some $m'$. Let $k$ be such that sup($J_k$)
$\in K$, and $q_k \supseteq q'$. Consider ($q_k, J_k$). Note that
($q_k, J_k \cap K$) extends both ($q_k, J_k$) and $(q', K)$, hence
forces both $f(n) = m$ and $f(n) = m'$, which means that $m=m'$.

Therefore, at this stage in the construction, we are in Case I. By
the construction, $J_{k+1}$ = (sup $J_k, j_{max}$), where $j_{max}
\geq$ sup $K >$ lim $X$ = lim$_k$ (sup($J_k$)) $\geq$
sup($J_{k+1}$) = $j_{max}$, a contradiction.
\end{proof}

\section{Same theorem, Kripke model version}
{\bf Theorem \ref{thm1}}
{\it IZF$_{Ref}$ does not prove that every Cauchy sequence has a modulus of
convergence.}

To repeat the justification given in the introduction, even though
the theorem proved in this section is exactly the same as in the
previous, this argument is being given for methodological
considerations. The prior construction is topological, the coming
one is a Kripke model, and it is not clear (at least to the
author) how one could convert one to the other, either
mechanically via a meta-theorem or with some human insight. Hence
to help develop the technology of Kripke models, this alternate
proof is presented.

\subsection{Construction of the Model}
Let $M_{0} \prec M_{1} \prec$ ... be an $\omega$-sequence of
models of ZF set theory and of elementary embeddings among them,
as indicated, such that the sequence from $M_{n}$ on is definable
in $M_{n}$, and such that each thinks that the next has
non-standard integers. Notice that this is easy to define (mod
getting a model of ZF in the first place): an iterated ultrapower
using any non-principal ultrafilter on $\omega$ will do. We will
ambiguously use the symbol f to stand for any of the elementary
embeddings inherent in the M$_{n}$-sequence.

The Kripke model $M$ will have underlying partial order a
non-rooted tree; the bottom node (level 0) will have continuum (in
the sense of $M_0$) many nodes, and the branching at a node of
level $n$ will be of size continuum in the sense of $M_{n+1}$. (We
will eventually name each node by associating a Cauchy sequence to
it. Some motivation will be presented during this section, and the
final association will be at the end of this section.)
Satisfaction at a node will be indicated with the symbol
$\models$. There is a ground Kripke model, which, at each node of
level $n$, has a copy of $M_{n}$. The transition functions (from a
node to a following node) are the elementary embeddings given with
the original sequence of models (and therefore will be notated by
$f$ again). Note that by the elementarity of the extensions, this
Kripke model is a model of classical ZF. More importantly, the
model restricted to any node of level $n$ is definable in $M_{n}$,
because the original $M$-sequence was so definable.

The final model $M$ will be an extension of the ground model that
will be described like a forcing extension. That is, $M$ will
consist of (equivalence classes of) the terms from the ground
model. The terms are defined at each node separately, inductively
on the ordinals in that model. At any stage $\alpha$, a term of
stage $\alpha$ is a set $\sigma$ of the form $\{ \langle
\sigma_{j}, (p_{j}, I_{j}) \rangle \mid j \in J \}$, where $J$ is
some index set, each $\sigma_{i}$ is a term of stage $< \alpha$,
each $p_{j}$ is a finite function from ${\mathbb N}$ to ${\mathbb
Q}$, and each $I_{j}$ is an open rational interval on the real
line. Note that all sets from the ground model have canonical
names, by choosing each $p_{j}$ to be the empty function and $I_j$
to be the whole real line, hereditarily.

Notice also that the definition of the terms given above will be
interpreted differently at each node of the ground Kripke model,
as the ${\mathbb N}$ and ${\mathbb Q}$ change from node to node.
However, any term at a node gets sent by the transition function
$f$ to a corresponding term at any given later node. The
definitions given later, such as the forcing relation $\Vdash$,
are all interpretable in each $M_{n}$, and coherently so, via the
elementary embeddings.

As a condition, each finite function $p$ is saying ``the Cauchy
sequence includes me", and each interval $I$ is saying ``future
rationals in the Cauchy sequence have to come from me". For each
node of level $n$ there will be an associated Cauchy sequence $r$
(in the sense of $M_{n}$) such that at that node the true $p$'s
and $I$'s will be those compatible with $r$ (or, perhaps, those
with which $r$ is compatible, as the reader will). You might
reasonably think that compatibility means ``$p \subset r$ and
rng($r \backslash p) \subseteq I$": roughly, ``$r$ extends $p$,
and anything in $r$ beyond $p$ comes from $I$". But that's not
quite right. Consider the Cauchy sequence $r(n) = 1/n \; (n \geq$
1). rng$(r) \subseteq$ (0, 2), but in a non-standard extension,
$r$'s pattern could change at a non-standard integer; at that
point, it would be too late for $r$ to change by a standard
amount, but it could change by an infinitesimal amount. So the
range of $r$ could include (infinitely small) negative numbers,
which are outside of (0, 2). Hence we have the following

\begin{definition}
A condition (p, I) and a Cauchy sequence r are \underline{compatible} if
p $\subseteq$ r,  rng(r$\backslash$p) $\subseteq$
I, and lim(r) $\in$ I.

(p, I) is compatible with a finite function q if p $\subseteq$ q
and rng(q$\backslash$p) $\subseteq$ I.
\end{definition}

Given this notion of compatibility, speaking intuitively here, a
term $\sigma$ can be thought of as being interpretable (with
notation $\sigma^{r}$) inductively in $M_{n}$ as $\{
\sigma_{j}^{r} \mid \langle \sigma_{j}, p_j, I_{j} \rangle \in
\sigma$ and $r$ is compatible with ($p_j, I_j) \}$. (This notion
is hidden in the more formal development below, where we define
and then mod out by =$_M$.)

Our next medium-term goal is to define the primitive relations at
each node, =$_{M}$ and $\in_{M}$ (the subscript being used to
prevent confusion with equality and membership of the ambient
models $M_{n}$). In order to do this, we need first to develop our
space's topology.

\begin{definition} (q, J) $\leq$ (p, I) ((q, J) \underline{extends} (p, I)) if
q $\supseteq$ p, J $\subseteq$ I, and rng(q$\backslash$p) $\subset$ I.

{\it C} = $\{ (p_j, I_j) \mid j \in J \}$ \underline{covers} (p, I) if each
(p$_j$, I$_j$) extends (p, I) and each Cauchy sequence r
compatible with (p, I) is compatible with some  (p$_j$, I$_j$).

$\leq$ induces a notion of compatibility of conditions (having a
common extension). We say that a typical member $\langle \sigma,$
(p, I) $\rangle$ of a term is \underline{compatible} with (q, J) if
(p, I) and (q, J) are compatible.
\end{definition}

We need some basic facts about this p.o., starting with the fact
that it is a p.o.
\begin{lemma}
\begin{enumerate}
\item $\leq$ is reflexive, transitive, and anti-symmetric.
\item If (p, I) and (q, J) are each compatible with a Cauchy
sequence r, then they are compatible with each other.
\item If (p, I) and (q, J) are compatible, then their glb in the
p.o. is (p$\cup$q, I$\cap$J).
\item \{(p, I)\} covers (p, I).
\item A cover of a cover is a cover. That is, if {\it C} covers
(p, I), and, for each (p$_j$, I$_j$) $\in$ {\it C}, {\it C}$_j$
covers (p$_j$, I$_j$), then $\bigcup_j${\it C}$_j$ covers (p, I).
\item If {\it C} covers (p, I) and (q, J) $\leq$ (p, I), then (q,
J) is covered by {\it C} $\wedge$ (q, J) =$_{def}$ \{(p$_j\cup$q,
I$_j\cap$J) $\mid$ (q, J) is compatible with (p$_j$, I$_j$) $\in$
{\it C}\}.

\end{enumerate}
\end{lemma}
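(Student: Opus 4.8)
The plan is to verify all six items by directly unwinding the three notions just defined — compatibility of a condition with a Cauchy sequence, compatibility/extension of conditions, and cover — so there is essentially no deep content, only bookkeeping. Two features of the poset need a little care and should be dispatched once at the outset. First, an interval $I$ may be empty, in which case $(p,I)$ is the bottom condition; all six statements degenerate harmlessly in that case, so I will tacitly assume the intervals in play are nonempty whenever it matters. Second, the $p$'s are finite partial functions ${\mathbb N}\to{\mathbb Q}$, so two of them need not be $\subseteq$-comparable; but whenever $p,q$ both sit inside a common Cauchy sequence $r$ (or inside a common stronger condition) they agree on the overlap of their domains, so $p\cup q$ is again a finite function and is contained in $r$ (resp. in that stronger condition). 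I also read the ``$\subset$'' in the definition of $\leq$ as ``$\subseteq$'', which makes reflexivity immediate.

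Items 1, 3, 4, 5 are purely order-theoretic. For transitivity in 1, note that for $p\subseteq q\subseteq r$ one has $\mathrm{rng}(r\setminus p)=\mathrm{rng}(r\setminus q)\cup\mathrm{rng}(q\setminus p)$, so the three defining inclusions compose; antisymmetry is forced coordinatewise. For 3, if $(p,I)$ and $(q,J)$ share a common extension $(s,L)$ then $p,q\subseteq s$, so $p\cup q$ is well defined and $\subseteq s$; one checks $(p\cup q,I\cap J)\leq(p,I)$ and $\leq(q,J)$ (using $\mathrm{rng}((p\cup q)\setminus p)=\mathrm{rng}(q\setminus p)\subseteq\mathrm{rng}(s\setminus p)\subseteq I$, and symmetrically), and any common lower bound $(t,M)$ satisfies $t\supseteq p\cup q$, $M\subseteq I\cap J$, and $\mathrm{rng}(t\setminus(p\cup q))\subseteq I\cap J$, hence lies below $(p\cup q,I\cap J)$; so the latter is the glb. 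Item 4 is trivial (reflexivity, plus the fact that the only member of the cover is $(p,I)$ itself). For 5, transitivity gives that every member of $\bigcup_j C_j$ extends $(p,I)$, and a two-step chase — any $r$ compatible with $(p,I)$ picks out some $(p_j,I_j)\in C$ compatible with $r$, which in turn picks out some member of $C_j$ compatible with $r$ — gives the covering property.

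Items 2 and 6 are where the clause ``$\lim(r)\in I$'' in the definition of compatibility earns its keep, and this is the only mildly substantive point. For 2: given $r$ compatible with both $(p,I)$ and $(q,J)$, the function $p\cup q$ is $\subseteq r$; then $\mathrm{rng}(r\setminus(p\cup q))$ is contained in both $\mathrm{rng}(r\setminus p)\subseteq I$ and $\mathrm{rng}(r\setminus q)\subseteq J$, and — crucially — $\lim(r)\in I\cap J$, so $r$ is compatible with $(p\cup q,I\cap J)$, which by the computation in item 3 extends both conditions; hence they are compatible. (In particular $I\cap J\neq\emptyset$ because it contains $\lim(r)$, so no bottom-condition pathology arises.) Item 6 then combines this with 3: that each element of $C\wedge(q,J)$ extends $(q,J)$ is exactly the glb statement of 3, and for the covering property one lets $r$ be compatible with $(q,J)$, observes (using $(q,J)\leq(p,I)$, $\mathrm{rng}(r\setminus p)=\mathrm{rng}(r\setminus q)\cup\mathrm{rng}(q\setminus p)\subseteq J\cup I=I$, and $\lim(r)\in J\subseteq I$) that $r$ is compatible with $(p,I)$, picks $(p_j,I_j)\in C$ compatible with $r$ since $C$ covers $(p,I)$, applies item 2 to conclude $(p_j,I_j)$ and $(q,J)$ are compatible so that $(p_j\cup q,I_j\cap J)\in C\wedge(q,J)$, and makes one last direct check that $r$ is compatible with it.

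Finally, I would note that every clause is first-order over the parameters displayed and everything is carried out inside a single $M_n$, so the lemma holds at each node of the ground Kripke model and is transported, coherently, by the elementary transition maps — which is all that the subsequent definitions of $=_M$, $\in_M$, and $\Vdash$ will need. I expect no real obstacle here; the ``hard'' step, such as it is, is just keeping track of which range inclusions and which appearances of the limit point are being invoked in items 2 and 6.
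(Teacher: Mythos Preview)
Your verification is correct in every part; the paper's own proof is simply ``Left to the reader,'' so you have supplied exactly the routine bookkeeping the author elided. Your flags about reading $\subset$ as $\subseteq$ for reflexivity and about the role of the clause $\lim(r)\in I$ in items 2 and 6 are apt and match what the definitions require.
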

\begin{proof}
Left to the reader.
\end{proof}

Now we are in a position to define =$_{M}$ and $\in_{M}$.
This will be done via a forcing relation $\Vdash$.

\begin{definition}
$(p, I) \Vdash \sigma =_{M} \tau$ and $(p, I)
\Vdash \sigma \in_{M} \tau$
are defined inductively on $\sigma$ and $\tau$, simultaneously for all
$(p, I)$:

$(p, I) \Vdash \sigma =_{M} \tau$ iff for all
$\langle \sigma_{j}, (p_{j}, I_j) \rangle \in \sigma$ compatible
with (p, I) $(p \cup p_j, I \cap I_{j}) \Vdash \sigma_{j} \in_{M} \tau$ and vice
versa, and

$(p, I) \Vdash \sigma \in_{M} \tau$ iff there is a cover {\it C}
of (p, I) such that for all $(p_j, I_j) \in {\it C}$ there is a
$\langle \tau_{k}, (p_{k}, I_k) \rangle \in \tau$ such that $(p_j,
I_j) \leq (p_k, I_k)$ and $(p_j, I_j) \Vdash \sigma =_{M}
\tau_{k}$.
\end{definition}

(We will later extend this forcing relation to all formulas.)

\begin{definition}
At a node (with associated real r), for any two terms $\sigma$ and $\tau$,
$\sigma =_{M} \tau$ iff, for
some (p, I) compatible with r, (p, I) $\Vdash \sigma =_{M} \tau$.

Also, $\sigma \in_{M} \tau$ iff for
some (p, I) compatible with r, (p, I) $\Vdash \sigma \in_{M} \tau$.
\end{definition}
Thus we have a
first-order structure at each node.

The transition functions are the same as before. That is, if
$\sigma$ is an object at a node, then it's a term, meaning in
particular it's a set in some $M_{n}$. Any later node has for its
universe the terms from some $M_{m}, m \geq n$. With $f$ the
elementary embedding from $M_{n}$ to $M_{m}$, $f$ can also serve
as the transition function between the given nodes. These
transition functions satisfy the coherence conditions necessary
for a Kripke model.

To have a Kripke model, $f$ must also respect =$_{M}$ and
$\in_{M}$, meaning that $f$ must be an =$_{M}$- and
$\in_{M}$-homomorphism (i.e. $\sigma =_{M} \tau \; \rightarrow \;
f(\sigma$) =$_{M} f(\tau$), and similarly for $\in_{M}$). In order
for these to be true, we need an additional restriction on the
model. By way of motivation, one requirement is, intuitively
speaking, that the sets $\sigma$ can't shrink as we go to later
nodes. That is, once $\sigma_{j}$ gets into $\sigma$ at some node,
it can't be thrown out at a later node. $\sigma_{j}$ gets into
$\sigma$ because $r$ is compatible with $(p_j, I_{j})$ (where
$\langle \sigma_j, (p_j, I_j) \rangle \in \sigma$). So we need to
guarantee that if $r$ and $(p, I)$ are compatible and $r'$ is
associated to any extending node then $r'$ and $(p, I)$ are
compatible for any condition $(p, I)$. This holds exactly when
$r'$ extends $r$ and all of the entries in $r'\backslash r$ are
infinitesimally close to lim($r$). This happens, for instance,
when $r' = f(r)$. Other such examples would be $f(r)$ truncated at
some non-standard place and arbitrarily extended by any Cauchy
sequence through the reals with standard part lim($r$); in fact,
all such $r'$ have that form. We henceforth take this as an
additional condition on the construction: once $r$ is associated
to a node, then for any $r'$ associated to an extending node,
rng($r'\backslash r$) must consist only of rationals infinitely
close to lim($r$).

\begin{lemma}
f is an $=_{M}$ and $\in_{M}$-homomorphism.
\end{lemma}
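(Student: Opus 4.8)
The plan is to prove simultaneously, by induction on the pair of terms $\sigma, \tau$, the stronger statement about the forcing relation: for any condition $(p,I)$ living at a node realized by $M_n$, if $(p,I) \Vdash \sigma =_M \tau$ then $f(p,I) \Vdash f(\sigma) =_M f(\tau)$, and likewise for $\in_M$, where $f$ is the elementary embedding $M_n \to M_m$ and $f(p,I)$ denotes the image condition (which, since $p$ is a finite function and $I$ a rational interval, is just $(p,I)$ itself under the identification of standard rationals across the $M_i$'s — this identification is where elementarity does its real work). Passing from the statement about $\Vdash$ to the statement about $=_M$ at a node is then immediate from the definition of $=_M$: if $\sigma =_M \tau$ at a node with associated real $r$, witnessed by some $(p,I)$ compatible with $r$, then by the forcing claim $f(p,I) \Vdash f(\sigma) =_M f(\tau)$, and by the additional compatibility-preservation condition imposed at the end of the construction (rng$(r' \backslash r)$ consists of rationals infinitely close to $\lim r$), the condition $(p,I)$ remains compatible with the real $r'$ associated to the later node; hence $f(\sigma) =_M f(\tau)$ holds there.

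The key steps, in order: (1) Observe that the forcing relation $\Vdash$, together with the auxiliary notions (compatibility, $\leq$, covers), is defined by a formula that makes sense uniformly in each $M_i$, and that $f \colon M_n \to M_m$ is elementary; therefore $f$ carries the statement "$(p,I) \Vdash \sigma =_M \tau$" as computed in $M_n$ to the statement "$f(p,I) \Vdash f(\sigma) =_M f(\tau)$" as computed in $M_m$ — provided we know that $f$ commutes with the syntactic operations involved. (2) Check that $f$ fixes (the names of) finite functions $p$ from $\mathbb{N}$ to $\mathbb{Q}$ and rational intervals $I$ up to the canonical identification, so that $f(p,I)$ is again a legitimate condition and in fact "the same" condition; this uses that standard naturals and standard rationals are absolute between $M_n$ and $M_m$ by elementarity. (3) Check that $f$ commutes with cover: if $C$ is a cover of $(p,I)$ in $M_n$, then $f(C)$ is a cover of $f(p,I)$ in $M_m$ — again immediate from elementarity, since "is a cover" is expressible by a formula. (4) Assemble: run the induction. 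In the $=_M$ clause, a typical $\langle \sigma_j,(p_j,I_j)\rangle \in \sigma$ compatible with $(p,I)$ is sent by $f$ to $\langle f(\sigma_j),(p_j,I_j)\rangle \in f(\sigma)$ compatible with $f(p,I)$, and by the induction hypothesis the forcing of $\sigma_j \in_M \tau$ at $(p\cup p_j, I\cap I_j)$ transfers; the $\in_M$ clause is analogous using step (3) for the cover. (5) Finally, transfer from $\Vdash$ at conditions to $=_M, \in_M$ at nodes as described in the first paragraph, invoking the compatibility-preservation restriction on associated reals.

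The main obstacle — really the only place where any thought is needed beyond bookkeeping — is step (5), i.e. ensuring that the condition witnessing $\sigma =_M \tau$ (or $\sigma \in_M \tau$) at a node is still \emph{compatible} with the real $r'$ at the later node, so that the conclusion $f(\sigma) =_M f(\tau)$ is actually witnessed there. This is exactly why the construction was equipped with the extra hypothesis that, for $r'$ associated to any node extending the node of $r$, rng$(r' \backslash r)$ consists only of rationals infinitely close to $\lim r$: that hypothesis is precisely what guarantees $(p,I)$ compatible with $r$ implies $(p,I)$ compatible with $r'$. Everything else is a routine induction powered by the elementarity of the $f$'s, together with the remark already made in the text that the forcing relation and all the auxiliary combinatorics are interpretable, coherently, in each $M_n$.
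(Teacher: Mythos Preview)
Your proposal is correct in substance and uses the same two ingredients as the paper: elementarity of $f$ to transfer the forcing statement, and the restriction on $r'$ to preserve compatibility of the witnessing condition. The paper's proof is just those two observations, in three lines: from $(p,I)\Vdash \sigma =_M \tau$ in $M_n$, elementarity gives $f((p,I))\Vdash f(\sigma)=_M f(\tau)$ in $M_m$; the construction's constraint on $r'$ guarantees $(p,I)$ (identified with its $f$-image) is still compatible with $r'$; done, and similarly for $\in_M$.

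Where your write-up differs is in the induction-on-terms framing (your step (4)). This is redundant once you have invoked elementarity in step (1): elementarity already transfers the entire forcing statement wholesale, so there is nothing left to induct on. More to the point, if step (4) were meant to stand on its own as an alternative to the elementarity black box, it would have a gap. To verify $f(p,I)\Vdash f(\sigma)=_M f(\tau)$ in $M_m$ from the definition, you must check the clause for \emph{every} $\langle\tau',(q',J')\rangle\in f(\sigma)$ as seen in $M_m$, and when $\sigma$ is infinite these are not all $f$-images of members of $\sigma$ in $M_n$ (there are ``nonstandard'' members indexed past $M_n$'s integers). Your step (4) only handles the $f$-images. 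So the induction, taken literally, does not close; elementarity is what is actually carrying the argument, and the paper simply says so and stops.
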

\begin{proof}
If $\sigma =_{M} \tau$ then let $(p, I)$ compatible with $r$
witness as much. At any later node, $(p, I) = f((p, I)) = (f(p),
f(I)) \Vdash f(\sigma) =_{M} f(\tau)$. Also, the associated real
$r'$ would still be compatible with $(p, I)$. So the same $(p, I)$
would witness $f(\sigma) =_{M} f(\tau)$ at that node. Similarly
for $\in_{M}$.
\end{proof}

We can now conclude that we have a Kripke model.

\begin{lemma} \label{equalitylemma}
This Kripke model satisfies the equality axioms:
\begin{enumerate}
\item $\forall x \; x=x$

\item $\forall x, y \; x=y \rightarrow y=x$

\item $\forall x, y, z \; x=y \wedge y=z \rightarrow x=z$

\item $\forall x, y, z \; x=y \wedge x \in z \rightarrow y \in z$

\item $\forall x, y, z \; x=y \wedge z \in x \rightarrow z \in y.$

\end{enumerate}

\end{lemma}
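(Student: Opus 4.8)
The plan is to verify the five equality axioms by working with the forcing relation $\Vdash$ and the cover machinery from the preceding lemmas, then transferring the conclusions to the node-level structure via the definitions of $=_M$ and $\in_M$ at a node with associated real $r$. Since at a node a statement holds iff it is forced by some $(p,I)$ compatible with $r$, it suffices to prove the corresponding forcing facts: for instance, for axiom (1) that every $(p,I)$ forces $\sigma =_M \sigma$; for axiom (2) that $(p,I)\Vdash \sigma=_M\tau$ implies $(p,I)\Vdash\tau=_M\sigma$; and so on. Each of these is proved by induction on the terms involved, simultaneously for all conditions $(p,I)$, exactly as the definition of $\Vdash$ is set up.

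First I would do axioms (1)--(3), the reflexivity, symmetry and transitivity of $=_M$. Symmetry is immediate from the ``and vice versa'' clause in the definition of $(p,I)\Vdash\sigma=_M\tau$. Reflexivity: given $\langle\sigma_j,(p_j,I_j)\rangle\in\sigma$ compatible with $(p,I)$, one must show $(p\cup p_j, I\cap I_j)\Vdash \sigma_j\in_M\sigma$; take the trivial cover $\{(p\cup p_j,I\cap I_j)\}$ (legitimate by part 4 of the p.o.\ lemma), note $(p\cup p_j,I\cap I_j)\le (p_j,I_j)$, and apply the induction hypothesis that $(p\cup p_j,I\cap I_j)\Vdash\sigma_j=_M\sigma_j$. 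Transitivity is the most bookkeeping-heavy of the three: from $(p,I)\Vdash\sigma=_M\tau$ and $(p,I)\Vdash\tau=_M\rho$ one chases a member of $\sigma$ compatible with $(p,I)$ through $\tau$ — here I expect to need the ``cover of a cover is a cover'' and ``$C\wedge(q,J)$'' parts of the p.o.\ lemma to refine covers so that on each piece the witnessing $\tau_k$ is pinned down, then glue via part 5.

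Next, axioms (4) and (5), the congruence of $\in_M$ under $=_M$ on each side. For (5), suppose $(p,I)\Vdash\sigma=_M\tau$ and $(p,I)\Vdash\rho\in_M\sigma$; unwinding the latter gives a cover $C$ of $(p,I)$ such that each piece lies below some $\langle\sigma_k,(p_k,I_k)\rangle\in\sigma$ and forces $\rho=_M\sigma_k$. Since that piece is $\le(p_k,I_k)$, it is compatible with $(p_k,I_k)$, so by the definition of $(p,I)\Vdash\sigma=_M\tau$ (after meeting with $(p,I)$, which the piece is already below) it forces $\sigma_k\in_M\tau$; unwinding \emph{that} gives a sub-cover on which things sit below members of $\tau$ and are $=_M$ to them, and then transitivity (axiom (3), just proved) plus ``cover of a cover'' assemble a cover witnessing $(p,I)\Vdash\rho\in_M\tau$. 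Axiom (4) is the same argument with the roles swapped, using symmetry. Finally, one lifts all five forcing statements to the node: e.g.\ for (4), if $\sigma=_M\tau$ and $\sigma\in_M\rho$ at a node with real $r$, pick conditions $(p,I),(p',I')$ compatible with $r$ forcing these; by part 2 of the p.o.\ lemma they are compatible with each other, so their glb $(p\cup p',I\cap I')$ (part 3) forces both and is still compatible with $r$, whence $\tau\in_M\rho$ at the node.

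The main obstacle will be the cover-refinement bookkeeping in transitivity and in the $\in_M$-congruence steps: one repeatedly has a cover, needs to pass to a common refinement compatible with some other condition, and must check that forcing is preserved downward along $\le$ and reassembled upward along covers. The monotonicity of $\Vdash$ under $\le$ (if $(p,I)\Vdash\phi$ and $(q,J)\le(p,I)$ then $(q,J)\Vdash\phi$, for the two atomic $\phi$) is really the load-bearing sublemma here; it too is a routine induction on terms using parts 4--6 of the p.o.\ lemma, and I would either prove it first or inline it. None of this is deep — it is the standard Heyting/forcing verification — but it is exactly the kind of thing the author elsewhere in the paper says he does not want to write out three more times, so I would present axioms (1)--(3) in modest detail and then say (4) and (5) go ``similarly,'' mirroring the paper's own ``left to the reader'' style.
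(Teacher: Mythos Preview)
Your proposal is correct and follows essentially the same approach as the paper: isolate the monotonicity sublemma (forcing is preserved downward under $\le$), prove a forcing-level transitivity lemma by induction on terms, and then lift to the node by taking the glb of two conditions compatible with $r$. The only cosmetic differences are that the paper writes out axiom (4) in detail and leaves (5) as ``similar,'' whereas you do the reverse, and that the paper packages transitivity as a clean two-lemma decomposition (monotonicity plus ``same condition forces $\rho=\sigma$ and $\sigma=\tau$ implies it forces $\rho=\tau$'') rather than describing the cover-chasing directly; your slightly more elaborate description of the transitivity step is just an unpacking of what the paper's ``by induction on terms'' hides.
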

\begin{proof}
1: It is easy to show with a simultaneous induction that, for all
$(p, I)$ and $\sigma$, $(p, I)$ $\Vdash \sigma =_{M} \sigma$, and,
for all $\langle \sigma_{j}, (p_j, I_j) \rangle \in \sigma$
compatible with $(p, I), (p \cup p_j, I \cap I_{j}) \Vdash
\sigma_{i} \in_{M} \sigma$.

2: Trivial because the definition of $(p, I)$ $\Vdash \sigma =_{M}
\tau$ is itself symmetric.

3: For this and the subsequent parts, we need some lemmas.
\begin{lemma} If (p', I') $\leq$ (p, I) $\Vdash \sigma =_{M}
\tau$ then (p', I') $\Vdash \sigma =_{M} \tau$, and similarly for $\in_{M}$.
\end{lemma}
\begin{proof} By induction on $\sigma$ and $\tau$.
\end{proof}

\begin{lemma} If (p, I) $\Vdash \rho =_{M} \sigma$ and (p, I) $\Vdash
\sigma =_{M} \tau$ then (p, I) $\Vdash \rho =_{M} \tau$.
\end{lemma}
\begin{proof} Again, by induction on terms.
\end{proof}

Returning to proving property 3, the hypothesis is that for some
$(p, I)$ and $(q, J)$ each compatible with $r$, $(p, I) \Vdash
\rho =_{M} \sigma$ and $(q, J) \Vdash \sigma =_{M} \tau$. By the
first lemma, $(p \cup q, I \cap J) \Vdash \rho =_{M} \sigma,
\sigma =_{M} \tau$, and so by the second, $(p \cup q, I \cap J)
\Vdash \rho =_{M} \tau$. Also, $(p \cup q, I \cap J)$ is
compatible with $r$.

4: Let $(p, I) \Vdash \rho =_{M} \sigma$ and $(q, J) \Vdash \rho
\in_{M} \tau$. We will show that $(p \cup q, I \cap J) \Vdash
\sigma \in_{M} \tau$. Let {\it C} be a cover of $(q, J)$
witnessing $(q, J) \Vdash \rho \in_{M} \tau$. We will show that
$(p \cup q, I \cap J) \wedge {\it C} = (p, I) \wedge {\it C}$ is a
cover of $(p \cup q, I \cap J)$ witnessing $(p \cup q, I \cap J)
\Vdash \sigma \in_{M} \tau$. Let $(q_i, J_i) \in$ {\it C} and
$\langle \tau_k, p_k, I_k \rangle$ be the corresponding member of
$\tau$. By the first lemma, $(p \cup q_i, I \cap J_{i}) \Vdash
\rho =_{M} \sigma$, and so by the second, $(p \cup q_i, I \cap
JS_{i}) \Vdash \sigma =_{M} \tau_{k}$.

5: Similar, and left to the reader.
\end{proof}

With this lemma in hand, we can now mod out by =$_{M}$, so that the
symbol ``=" is interpreted as actual set-theoretic equality. We
will henceforth drop the subscript $_{M}$ from = and $\in$,
although we will not distinguish notationally between a term
$\sigma$ and the model element it represents, $\sigma$'s
equivalence class.

At this point, we need to finish specifying the model in detail.
What remains to be done is to associate a Cauchy sequence to each
node. At the bottom level, assign each Cauchy sequence from $M_0$
to exactly one node. Inductively, suppose we chose have the
sequence $r$ at a node with ground model $M_{n}$. There are
continuum-in-the-sense-of-$M_{n+1}$-many immediate successor
nodes. Associate each possible candidate $r'$ in $M_{n+1}$ with
exactly one such node. (As a reminder, that means each member of
rng($r'$$\backslash$$r$) is infinitely close to lim($r$).)

By way of notation, a node will be named by its associated
sequence. Hence ``$r$ $\models \phi$" means $\phi$ holds at the
node with sequence $r$.

Note that, at any node of level $n$, the choice of $r$'s from that
node on is definable in $M_{n}$. This means that the evaluation of
terms (at and beyond the given node) can be carried out over
$M_{n}$, and so the Kripke model (from the given node on) can be
defined over $M_{n}$, truth predicate and all.

\subsection{The Forcing Relation}
Which $(p, I)$'s count as true determines the interpretation of
all terms, and hence of truth in the end model. We need to get a
handle on this. As with forcing, we need a relation $(p, I) \Vdash
\phi$ which supports a truth lemma. Note that, by elementarity, it
doesn't matter in which classical model $M_n$ or at what node in
the ground Kripke model $\Vdash$ is being interpreted (as long as
the parameters are in the interpreting model, of course).

\begin{definition}
(p, I) $\Vdash \phi$ is defined inductively on $\phi$:

(p, I) $\Vdash \sigma =_{M} \tau$ iff for all
$\langle \sigma_{j}, (p_{j}, I_j) \rangle \in \sigma$ compatible with
(p, I) $(p \cup p_j, I \cap I_{j}) \Vdash \sigma_{j} \in_{M} \tau$ and vice
versa

(p, I) $\Vdash \sigma \in_{M} \tau$ iff there is a cover {\it C}
of (p, I) such that for all $(p_j, I_j) \in {\it C}$ there is a
$\langle \tau_{k}, (p_{k}, I_k) \rangle \in \tau$ such that $(p_j,
I_j) \leq (p_k, I_k)$ and $(p_j, I_j) \Vdash \sigma =_{M}
\tau_{k}$.

(p, I) $\Vdash \phi \wedge \psi$ iff (p, I) $\Vdash \phi$ and (p, I) $\Vdash
\psi$

(p, I) $\Vdash \phi \vee \psi$ iff there is a cover {\it C} of (p,
I) such that, for each (p$_j$, I$_j$) $\in$ {\it C}, (p$_j$, I$_j$)
$\Vdash \phi$ or (p$_j$, I$_j$) $\Vdash \psi$

(p, I) $\Vdash \phi \rightarrow \psi$ iff for all (q, J) $\leq$ (p, I) if
(q, J) $\Vdash \phi$ then (q, J) $\Vdash \psi$

(p, I) $\Vdash \exists x \; \phi(x)$ iff there is a cover {\it C} of (p,
I) such that, for each (p$_j$, I$_j$) $\in$ {\it C}, there is a $\sigma$ such that
(p$_j$, I$_j$) $\Vdash
\phi(\sigma)$

(p, I) $\Vdash \forall x \; \phi(x)$ iff for all  $\sigma$ (p, I) $\Vdash \phi(\sigma)$

\end{definition}

\begin{lemma} \label{helpful lemma}
\begin{enumerate}
\item If (q, J) $\leq$ (p, I) $\Vdash \phi$ then (q, J) $\Vdash \phi$.
\item If {\it C} covers (p, I), and (p$_j$, I$_{j}$) $\Vdash \phi$ for all
(p$_j$, I$_{j}$) $\in$ {\it C}, then (p, I) $\Vdash \phi$.
\item (p, I) $\Vdash \phi$ iff for all r compatible with (p, I) there is a
(q, J) compatible with r such that (q, J) $\Vdash \phi$.
\item Truth Lemma: For any node r, r $\models \phi$
iff (p, I) $\Vdash \phi$ for some (p, I) compatible with r.
\end{enumerate}
\end{lemma}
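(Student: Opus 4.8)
The plan is to establish the four parts essentially in order, proving (1) and (2) by a single induction on the structure of $\phi$, deriving (3) from them, and then proving the Truth Lemma (4) by a second induction on $\phi$ that calls on (1)--(3) and the definition of the Kripke structure. For atomic $\phi$, part (1) is exactly the lemma already proved in the verification of the equality axioms (``if $(p',I')\le (p,I)\Vdash\sigma=_M\tau$ then $(p',I')\Vdash\sigma=_M\tau$'', and likewise for $\in_M$), and part (2) is proved in the same style by induction on terms, using that a cover of a cover is a cover and that $C\wedge(q,J)$ covers $(q,J)$ (items 5 and 6 of the p.o.\ lemma). For the inductive step, $\wedge$ and $\forall$ are immediate by applying the induction hypothesis to each conjunct, resp.\ each instance $\phi(\sigma)$; the clauses $\vee$, $\exists$, and atomic $\in_M$ use item 5 for (2) and item 6 together with the induction hypothesis for (1). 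The one case needing care is $\to$: for (1) it follows at once from transitivity of $\le$; for (2), given a cover $C$ of $(p,I)$ each of whose members forces $\phi\to\psi$, and given $(q,J)\le(p,I)$ with $(q,J)\Vdash\phi$, one passes to the cover $C\wedge(q,J)$ of $(q,J)$, observes that each of its members both extends a member of $C$ and lies below $(q,J)$, hence forces $\phi$ by the induction hypothesis and therefore forces $\psi$, and concludes $(q,J)\Vdash\psi$ by the induction hypothesis (2) for $\psi$.

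Part (3) is then a quick consequence. The forward direction is trivial: if $(p,I)\Vdash\phi$ and $r$ is compatible with $(p,I)$, take $(q,J)=(p,I)$. For the converse, let $C=\{(q,J)\mid (q,J)\le(p,I)\text{ and }(q,J)\Vdash\phi\}$; given any $r$ compatible with $(p,I)$, choose $(q_0,J_0)$ compatible with $r$ and forcing $\phi$, note that $(p,I)$ and $(q_0,J_0)$ are compatible (item 2), so their glb $(p\cup q_0,\,I\cap J_0)$ is a condition $\le(p,I)$ that is still compatible with $r$ and still forces $\phi$ by (1), whence it lies in $C$. Thus $C$ covers $(p,I)$ and part (2) gives $(p,I)\Vdash\phi$. (Taking $C$ to be the set of all such $(q,J)$ avoids any use of choice in the meta-theory.)

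Finally I would prove (4) by induction on $\phi$, uniformly over all nodes, using (1)--(3) and the clauses defining the Kripke semantics. For atomic $\phi$ it holds by the very definition of $=_M$ and $\in_M$ at a node. The cases $\wedge$, $\vee$, $\exists$ are routine: to pass from $\models$ to $\Vdash$ one amalgamates witnessing conditions via the fact that the glb of two conditions compatible with $r$ is again compatible with $r$; to pass from $\Vdash$ to $\models$ one uses that $r$ is compatible with at least one member of any cover of a condition compatible with $r$, together with the induction hypothesis. The delicate cases are $\to$ and $\forall$, precisely because Kripke satisfaction quantifies over all later nodes $r'\ge r$ whereas $\Vdash$ quantifies over extending conditions, and not every condition below one compatible with $r$ is compatible with some node above $r$. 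The easy direction of $\to$ (that $(p,I)\Vdash\phi\to\psi$ with $(p,I)$ compatible with $r$ gives $r\models\phi\to\psi$) uses the design property of the model recorded just before the $=_M$/$\in_M$ definitions: if $(p,I)$ is compatible with $r$ then it is compatible with every $r'$ above $r$, since $\mathrm{rng}(r'\backslash r)$ is confined to the monad of $\lim(r)$. Hence $(p,I)$ remains available at $r'$, and one amalgamates it with an induction-hypothesis witness for $\phi$ at $r'$ to force $\psi$, then applies the induction hypothesis again.

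The hard direction of $\to$ (and, identically, of $\forall$ with a term $\sigma$ in place of $\phi$) is where I expect the real work, and it is the main obstacle of the lemma. Assuming no condition compatible with $r$ forces $\phi\to\psi$, one must manufacture a node $r'\ge r$ with $r'\models\phi$ and $r'\not\models\psi$, contradicting $r\models\phi\to\psi$. Here I would feed ``$(q,J)\not\Vdash\psi$'' into part (3) to produce a Cauchy sequence $s$ compatible with $(q,J)$ no condition compatible with which forces $\psi$ — so $s\not\models\psi$ while $s\models\phi$ by the induction hypothesis — and then exploit the richness of the construction (continuum-many branches at each level and the non-standard integers supplied by the tower $M_0\prec M_1\prec\cdots$) to arrange that such an $s$ can be taken to be the sequence associated to a node genuinely extending $r$, i.e.\ extending $r$ within the monad of $\lim(r)$. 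Packaging this realization cleanly — probing with conditions fine enough around $r$ and re-selecting the sub-conditions so the resulting $s$ is a legitimate successor of $r$ — is the only genuinely delicate point; the remaining verifications are bookkeeping with the p.o.\ lemma.
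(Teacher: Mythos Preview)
Your proposal is correct and follows the paper's approach throughout; parts (1)--(3) are dispatched by the paper in one line each essentially as you describe, and the Truth Lemma is proved by the same induction with $\to$ and $\forall$ as the substantive cases. The paper makes your ``delicate point'' precise in one stroke: work at the node $f(r)$ in $M_{n+1}$ and choose $(p,I)$ compatible with $f(r)$ with $p$ of \emph{non-standard} length (hence $p\supseteq r$ and $I$ infinitesimal about $\lim(r)$), so that any $r'$ compatible with any $(q,J)\le(p,I)$ is automatically a legitimate successor node of $r$ --- the upfront choice of $(p,I)$ removes any need for after-the-fact ``re-selecting.''
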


\begin{proof}
1. A trivial induction, using of course the earlier lemmas about $\leq$ and covers.

2. Easy induction. The one case to watch out for is $\rightarrow$,
where you need to invoke the previous part of this lemma.

3. Trivial, using 2.

4. By induction on $\phi$, in detail for a change.

In all cases, the right-to-left direction (``forced implies true") is pretty easy,
by induction. (Note that only the $\rightarrow$ case needs the
left-to-right direction in this induction.) Hence in the following
we show only left-to-right (``if true at a node then forced").

=: This is exactly the definition of =.

$\in$: This is exactly the definition of $\in$.

$\wedge$: If $r \models \phi \wedge \psi$, then $r \models \phi$
and $r \models \psi$. Inductively let $(p, I) \Vdash \phi$ and
$(q, J) \Vdash \psi$, where $(p, I)$ and $(q, J)$ are each
compatible with $r$. That means that $(p, I)$ and $(q, J)$ are
compatible with each other, and $(p \cup q, I \cap J)$ suffices.

$\vee$: If $r \models \phi \vee \psi$, then WLOG $r \models \phi$
. Inductively let $(p, I)$ $\Vdash \phi$, $(p, I)$ compatible with
$r$. \{$(p, I)$\} suffices.

$\rightarrow$: Suppose to the contrary $r$ $\models \phi
\rightarrow \psi$ but no $(p, I)$ compatible with $r$ forces such.
Work in the node $f(r)$. (Recall that $f$ is the universal symbol
for the various transition functions in sight. What we mean more
specifically is that if $r$ $\in M_n$, i.e. if $r$ is a node from
level $n$, then $f(r)$ is the image of $r$ in $M_{n+1}$, i.e. in
the Kripke structure on level $n+1$.) Let $(p, I)$ be compatible
with $f(r)$ and $p$ have non-standard (in the sense of $M_n$)
length (equivalently, $I$ has infinitesimal length). Since $(p,
I)$ $\not\Vdash \phi \rightarrow \psi$ there is a $(q, J)$ $\leq$
$(p, I)$ such that $(q, J)$ $\Vdash \phi$ but $(q, J)$ $\not\Vdash
\psi$.  By the previous part of this lemma, there is an $r'$
compatible with $(q, J)$ such that no condition compatible with
$r'$ forces $\psi$. At the node $r'$, by induction, $r'$
$\not\models \psi$, even though $r'$ $\models \phi$ (since $r'$ is
compatible with $(p, I)$ $\Vdash \phi$). This contradicts the
assumption on $r$ (i.e. that $r$ $\models \phi \rightarrow \psi$),
since $r'$ extends $r$ (as nodes).

$\exists$: If $r$ $\models \exists x \; \phi(x)$ then let $\sigma$
be such that $r$ $\models \phi(\sigma)$. Inductively there is a
$(p, I)$ compatible with $r$ such that $(p, I)$ $\Vdash
\phi(\sigma)$. \{$(p, I)$\} suffices.

$\forall$: Suppose to the contrary $r$ $\models \forall x \;
\phi(x)$ but no $(p, I)$ compatible with $r$ forces such. As with
$\rightarrow$, let $(p, I)$ non-standard be compatible with
$f(r)$. Since $(p, I)$ $\not\Vdash \forall x \; \phi(x)$ there is
a $\sigma$ such that $(p, I)$ $\not\Vdash \phi(\sigma)$. By the
previous part of this lemma, there is an $r'$ compatible with $(p,
I)$ such that, for all $(q, J)$ compatible with $r', (q, J)
\not\Vdash \phi(\sigma)$. By induction, that means that $r'
\not\models \phi(\sigma)$. This contradicts the assumption on $r$
(i.e. that $r$ $\models \forall x \; \phi(x)$), since $r'$ extends
$r$ (as nodes).

\end{proof}

\subsection{The Final Proofs}
Using $\Vdash$, we can now prove

\begin{theorem}
This Kripke model satisfies IZF$_{Ref}$.
\end{theorem}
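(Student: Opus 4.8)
The plan is to verify the axioms of IZF$_{Ref}$ one at a time, in each case exhibiting a term (or a refinement of terms already in hand) that witnesses the axiom and then invoking the Truth Lemma (part 4 of Lemma \ref{helpful lemma}) to pass from the forcing relation $\Vdash$ to satisfaction at nodes. Since every node is compatible with $(\emptyset, {\mathbb R})$, it is enough to show $(\emptyset, {\mathbb R}) \Vdash \phi$ for each axiom $\phi$; and it is enough, by the definition of $\Vdash$ and by parts 2 and 3 of Lemma \ref{helpful lemma}, to handle the relevant instances locally, below an arbitrary condition. By the uniformity of the definition of $\Vdash$ and the elementarity of the transition functions, the argument is literally the same at every node and in every $M_n$. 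Throughout, the verification runs parallel to the known treatment of Heyting-valued models as in \cite{G}; the genuinely new feature is the bookkeeping forced on us by the non-standard integers that the transition functions introduce.

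The ``algebraic'' axioms are routine. Extensionality is immediate from the definition of $(p,I) \Vdash \sigma =_M \tau$ together with Lemma \ref{equalitylemma}. Pairing of $\sigma$ and $\tau$ is witnessed by $\{ \langle \sigma, (\emptyset, {\mathbb R}) \rangle, \langle \tau, (\emptyset, {\mathbb R}) \rangle \}$, and Union of $\sigma$ by $\{ \langle \rho, (p \cup p', I \cap I') \rangle \mid \langle \tau', (p', I') \rangle \in \sigma \mbox{ and } \langle \rho, (p, I) \rangle \in \tau' \}$. Infinity is witnessed by the canonical-name term for $\omega$, with the usual care that at a node of level $n$ the natural numbers of the model are those of $M_n$. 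Set Induction follows from the well-foundedness of the term hierarchy: if $(p, I) \Vdash \forall x \, (\forall y \in x \; \phi(y) \rightarrow \phi(x))$, then an induction on the stage of $\sigma$, using parts 1 and 2 of Lemma \ref{helpful lemma}, gives $(p, I) \Vdash \phi(\sigma)$ for every term $\sigma$, whence $(p, I) \Vdash \forall x \, \phi(x)$.

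Separation and Power Set exploit the closure of the ground models under the corresponding set operations, together with the definability of $\Vdash$ over each $M_n$. For Separation of $\phi(x)$ from $\sigma$, take $\tau = \{ \langle \sigma_j, (q, J) \rangle \mid \langle \sigma_j, (p_j, I_j) \rangle \in \sigma, \ (q, J) \leq (p_j, I_j), \ (q, J) \Vdash \phi(\sigma_j) \}$; this is a set in the ground model, hence a legitimate term, and one checks that it forces the Separation instance. For Power Set of $\sigma$, note that the conditions form a set, so the collection $S$ of all terms each of whose members has the form $\langle \sigma_j, (q, J) \rangle$ with $\langle \sigma_j, (p_j, I_j) \rangle \in \sigma$ and $(q, J) \leq (p_j, I_j)$ is again a set in the ground model; the term $\{ \langle \rho, (\emptyset, {\mathbb R}) \rangle \mid \rho \in S \}$ then forces ``every subset of $\sigma$ is one of my members'', since any $\tau$ forced below some $(p, I)$ to be a subset of $\sigma$ is, below a suitable cover, forced equal to a canonically formed member of $S$.

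The main obstacle is the Reflection schema, which is exactly where no off-the-shelf meta-theorem applies and where the growth of the model across nodes has to be confronted directly. The structural fact to lean on --- stressed during the construction --- is that from any node $r$ of level $n$ onward the entire Kripke model (terms, transition functions, forcing relation, truth predicate) is definable over $M_n$. Given a formula $\phi$ and a term $\sigma$, I would work inside $M_n$ and apply the Reflection schema available there (legitimate, since $M_n \models$ ZF, and in any case we assume at least IZF$_{Ref}$, hence far more than enough classically, in the meta-theory) to this $M_n$-definition of the Kripke model, cutting the term hierarchy off at a suitable ordinal $\gamma$ so that the resulting set-sized term $\theta_\gamma$ is forced to contain $\sigma$, to be transitive, and to reflect $\phi$; one then reads off from the Truth Lemma that $\theta_\gamma$ genuinely reflects $\phi$ at every node extending $r$. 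The delicate point is coherence across the transition functions: the cutoff $\gamma$ chosen over $M_n$ must be such that its image under $f$ still reflects at nodes of level $n+1$ and beyond, where new terms --- in particular ones mentioning non-standard integers --- have appeared. This is handled by the elementarity of $f$ together with the fact that the whole future of the model from $r$ on is already visible inside $M_n$, so that an $M_n$-reflecting cutoff remains an $M_{n+1}$-reflecting cutoff; threading this through the levels yields Reflection at $r$, and with it all of IZF$_{Ref}$. I expect this last argument, and making the across-node coherence precise in particular, to be the only part that is not essentially routine bookkeeping.
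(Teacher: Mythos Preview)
Your proposal is correct and follows the paper's overall strategy: dispatch the algebraic axioms with explicit witnessing terms, and for Reflection exploit that from any node $r$ of level $n$ the whole Kripke model (terms, forcing, truth) is definable over $M_n$, so that a classical reflection principle available in $M_n$ yields a set-sized term reflecting $\phi$. The one genuine difference is in how Reflection is executed. You reach for a L\'evy-style ordinal cutoff $\gamma$ of the term hierarchy and then worry about ``across-node coherence'' under the transition maps $f$; the paper instead chooses $k$ so that Kripke-truth of $\phi$ from $r$ onward is $\Sigma_k$-definable in $M_n$, takes a set $X \prec_k M_n$ containing $\sigma$, $r$, and the parameters, and lets $\tau = \{ \langle \rho, (\emptyset, {\mathbb R}) \rangle \mid \rho \in X \text{ is a term} \}$. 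This buys exactly what you are working to secure: because the entire future of the model is already visible in $M_n$, $\Sigma_k$-elementarity of $X$ in $M_n$ settles reflection at $r$ outright, and elementarity of $f$ transports the statement to later levels with no separate ``threading'' argument needed. Your route works too, but the coherence concern you flag as the delicate point is an artifact of the ordinal-cutoff packaging rather than a real obstacle; the paper's substructure formulation makes it disappear. One further small divergence: for Set Induction the paper argues by a minimal-$V$-rank counterexample at a node (using elementarity to pull the witness back into $M_n$), whereas you phrase it as a direct induction on stage---these are contrapositives of one another and equally valid.
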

\begin{proof}
Note that, as a Kripke model, the axioms of intuitionistic logic
are satisfied, by general theorems about Kripke models.

\begin{itemize}
\item Empty Set: The interpretation of the term $\emptyset$ will do.
\item Infinity: The canonical name for $\omega$ will do. (Recall
that the canonical name $\bar{x}$ of any set $x \in V$ is defined
inductively as $\{ \langle \bar{y}, (\emptyset, \mathbb{R})
\rangle \mid y \in x \}.)$
\item Pairing: Given $\sigma$ and $\tau$, $\{ \langle \sigma,
(\emptyset, {\mathbb R}) \rangle , \langle \tau, (\emptyset,
{\mathbb R}) \rangle \}$ will do.
\item Union: Given $\sigma$, $\{ \langle \tau, J \cap J_i \rangle
\mid$ for some $\sigma_i, \; \langle \tau, J \rangle \in \sigma_i$
and $\langle \sigma_i, J_i \rangle \in \sigma \}$ will do.
\item Extensionality: We need to show that $\forall x \; \forall y
\; [\forall z \; (z \in x \leftrightarrow z \in y) \rightarrow x =
y]$. So let $\sigma$ and $\tau$ be any terms at a node $r$ such
that $r \models ``\forall z \; (z \in \sigma \leftrightarrow z \in
\tau)"$. We must show that $r \models ``\sigma = \tau"$. By the
Truth Lemma, let $r \in J \Vdash ``\forall z \; (z \in \sigma
\leftrightarrow z \in \tau)"$; i.e. for all $r' \in J, \rho$ there
is a $J'$ containing $r'$ such that $J \cap J' \Vdash \rho \in
\sigma \leftrightarrow \rho \in \tau$. We claim that $J \Vdash
``\sigma = \tau"$, which again by the Truth Lemma suffices. To
this end, let $\langle \sigma_i, J_i \rangle$ be in $\sigma$; we
need to show that $J \cap J_i \Vdash \sigma_i \in \tau$. Let $r'$
be an arbitrary member of $J \cap J_i$ and $\rho$ be $\sigma_i$.
By the choice of $J$, let $J'$ containing $r'$ be such that $J
\cap J' \Vdash \sigma_i \in \sigma \leftrightarrow \sigma_i \in
\tau$; in particular, $J \cap J' \Vdash \sigma_i \in \sigma
\rightarrow \sigma_i \in \tau$. It has already been observed in
\ref{equalitylemma}, part 1, that $J \cap J' \cap J_i \Vdash
\sigma_i \in \sigma$, so $J \cap J' \cap J_i \Vdash \sigma_i \in
\tau$. By going through each $r'$ in $J \cap J_i$ and using
\ref{helpful lemma}, part 3, we can conclude that $J \cap J_i
\Vdash \sigma_i \in \tau$, as desired. The other direction
($``\tau \subseteq \sigma"$) is analogous.
\item Set Induction (Schema): Suppose $r$ $\models ``\forall x \;
((\forall y \in x \; \phi(y)) \rightarrow \phi(x))"$; by the Truth
Lemma, let $J$ containing $r$ force as much. We must show $r$
$\models ``\forall x \; \phi(x)"$. Suppose not. Using the
definition of satisfaction in Kripke models, there is an $r'$
extending (i.e. infinitesimally close to) $r$(hence in $J$) and a
$\sigma$ such that $r'$ $\not \models \phi(\sigma)$. By
elementarity, there is such an $r'$ in $M_n$, where $n$ is the
level of $r$. Let $\sigma$ be such a term of minimal $V$-rank
among all $r'$s in $J$. Fix such an $r'$. By the Truth Lemma (and
the choice of $J$), $r'$ $\models ``(\forall y \in \sigma \;
\phi(y)) \rightarrow \phi(\sigma)"$. We claim that $r'$ $\models
``\forall y \in \sigma \; \phi(y)"$. If not, then for some $r''$
extending $r'$ (hence in $J$) and $\tau, r'' \models \tau \in
\sigma$ and $r'' \not \models \phi(\tau)$. Unraveling the
interpretation of $\in$, this choice of $\tau$ can be substituted
by a term $\tau$ of lower $V$-rank than $\sigma$. By elementarity,
such a $\tau$ would exist in $M_n$, in violation of the choice of
$\sigma$, which proves the claim. Hence $r'$ $\models
\phi(\sigma)$, again violating the choice of $\sigma$. This
contradiction shows that $r \models ``\forall x \; \phi(x)"$.
\item Separation (Schema): Let $\phi(x)$ be a formula and $\sigma$
a term. Then $\{ \langle \sigma_i, J \cap J_i \rangle \mid \langle
\sigma_i, J_i \rangle \in \sigma$ and $J \Vdash \phi(\sigma_i) \}$
will do.
\item Power Set: A term $\hat{\sigma}$ is a canonical subset of
$\sigma$ if for all $\langle \sigma_i, \hat{J_i} \rangle \in
\hat{\sigma}$ there is a $J_i \supseteq \hat{J_i}$ such that
$\langle \sigma_i, {J_i} \rangle \in \sigma$. $\{ \langle
\hat{\sigma}, (\emptyset, {\mathbb R}) \rangle \mid \hat{\sigma}$
is a canonical subset of $\sigma \}$ is a set (in $M_n$), and will
do.
\item Reflection (Schema): Recall that the statement of Reflection
is that for every formula $\phi(x)$ (with free variable $x$ and
unmentioned parameters) and set $z$ there is a transitive set $Z$
containing $z$ such that $Z$ reflects the truth of $\phi(x)$ in
$V$ for all $x \in Z$. So to this end, let $\phi(x)$ be a formula
and $\sigma$ be a set at a node $r$ of level $n$ (in the tree
which is this Kripke model's partial order). Let $k$ be such that
the truth of $\phi(x)$ at node $r$ and beyond is $\Sigma_{k}$
definable in $M_n$. In $M_{n}$, let $X$ be a set containing
$\sigma$, $r$, and $\phi$'s parameters such that $X \prec_{k}
M_{n}$. Let $\tau$ be $\{ \langle \rho, (\emptyset, {\mathbb R})
\rangle \mid \rho \in X$ is a term\}. $\tau$ will do.
\end{itemize}
\end{proof}

We are interested in the canonical term $\{ \langle {\bar p}, (p,
I) \rangle \mid $ $p$ is a finite function from ${\mathbb N}$ to
${\mathbb Q}$ and $I$ is a non-empty, open interval from the reals
with rational endpoints$\}$, where ${\bar p}$ is the canonical
name for $p$. We will call this term $Z$. Note that at node $r$
$Z$ gets interpreted as $r$.

\begin{proposition}
For all nodes r, r $\models$ ``Z is a Cauchy sequence".
\end{proposition}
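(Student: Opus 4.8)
The plan is to transcribe into the Kripke setting the argument proving the analogous proposition for the topological model in Section~2, the only real change being that satisfaction at a node must now be routed through the Truth Lemma (Lemma~\ref{helpful lemma}, part~4). So it suffices to produce, for each node $r$, a condition compatible with $r$ that forces ``$Z$ is a Cauchy sequence'', and I expect in fact that the weakest condition $(\emptyset, {\mathbb R})$ forces this, so that it holds at every node. Recall that at a node with associated sequence $r$ the term $Z$ is interpreted as (a presentation of) $r$, which is a genuine Cauchy sequence of the classical model $M_n$; the work is to re-express this fact in the forcing language. Unwinding ``$Z$ is a Cauchy sequence'' there are two parts: that $Z$ is a total, single-valued function ${\mathbb N} \to {\mathbb Q}$, and that for each $N \in {\mathbb N}$ there is an $M$ with $\forall m, n > M \; |Z(m) - Z(n)| \le 1/N$.

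For the first part, given a condition $(p, I)$ and a numeral $\bar N$, extend $p$ to any finite $p'$ with $N \in \mathrm{dom}(p')$ and $\mathrm{rng}(p' \setminus p) \subseteq I$; then $(p', I) \le (p, I)$ and $(p', I) \Vdash Z(\bar N) = \overline{p'(N)}$, because $\langle \bar{p'}, (p', I) \rangle \in Z$. The family of all such $(p', I)$ covers $(p, I)$, since any $r$ compatible with $(p, I)$ is compatible with $(r \upharpoonright (\mathrm{dom}(p) \cup \{0, \dots, N\}), I)$, one member of that family; hence $(p, I) \Vdash \exists q \; Z(\bar N) = q$. Single-valuedness is immediate from the fact that any two conditions compatible with a common $r$ are compatible with each other (Lemma, part~2), so any two finite subfunctions of $r$ agree where both are defined, and the values are rationals by construction. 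For the Cauchy part, fix a node $r$ of level $n$ and $N \in \omega^{M_n}$; since $r$ is Cauchy in $M_n$, there are $M$ and a rational open interval $I$ of length $< 1/(2N)$ with $\{r(k) : k \ge M\} \subseteq I$, and enlarging $I$ to a rational interval $J$ of length $< 1/N$ containing $\lim r$, the condition $(r \upharpoonright M, J)$ is compatible with $r$ and forces $\forall m, n > \bar M \; |Z(m) - Z(n)| \le \overline{1/N}$, since every extension pins every coordinate past $M$ into $J$. A parallel cover argument --- covering an arbitrary condition by conditions $(r' \upharpoonright M', I')$ with $\mathrm{length}(I') < 1/(2N)$ --- shows that \emph{every} condition forces $\exists M \; \forall m, n > M \; |Z(m) - Z(n)| \le \overline{1/N}$.

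What remains is to force the outer universal quantifier over $N$ (present in both parts), and this is the one step where the Kripke bookkeeping departs from the topological version: an ``integer'' is now a member of the possibly nonstandard $\omega^{M_n}$, so ``let $N$ be an integer'' cannot simply be read off in the metatheory. I would handle an arbitrary term $\sigma$ with $(p, I) \Vdash \sigma \in \bar\omega$ by unwinding the definition of $\in_M$: this yields a cover of $(p, I)$ on each piece of which $\sigma$ is forced equal to an actual numeral $\bar N_j$, whence by the previous paragraph and the equality axioms (Lemma~\ref{equalitylemma}) that piece forces the relevant instance, and Lemma~\ref{helpful lemma}, part~2, reassembles these over the cover. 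I expect no genuine obstacle here: for this proposition nonstandardness is harmless, because $r$ is Cauchy in $M_n$'s sense ``all the way up'' through the nonstandard integers --- it is the later theorems that exploit the nonstandard integers to produce failures. The main care is simply in carrying out the cover manipulations cleanly and in keeping track of which model $M_n$ each interpretation lives in.
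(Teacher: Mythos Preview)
Your proposal is correct and follows essentially the same approach as the paper: for totality, exhibit a condition compatible with $r$ that pins down $Z(N)$; for the Cauchy property, use that $r$ is Cauchy in $M_n$ to find $M$ and a small interval, then enlarge the interval slightly so that it contains $\lim r$, yielding the condition $(r\upharpoonright M, J)$. The paper is terser --- it works directly at the node level (``suppose $r \models$ `$N$ is an integer'\,'') rather than unfolding cover arguments for $(\emptyset,{\mathbb R})$, and it handles your concern about nonstandard $N$ implicitly by that same phrase (since at level $n$ such an $N$ is simply an element of $\omega^{M_n}$) --- but your extra care about covers and the outer universal quantifier is sound and amounts to the same proof.
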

\begin{proof}
To see that $\bot \models$ ``$Z$ is total", suppose $r$ $\models$
``$N$ is an integer". Then ($\langle N, r(N) \rangle, {\mathbb
R})$ is compatible with $r$ and forces ``$Z(N) = r(N)$". That $Z$
is a function is similarly easy.

As for $Z$ being Cauchy, again let $r$ $\models$ ``$N$ is an
integer". Since $r$ is Cauchy, there is an integer $M$ such that
beyond $M$ $r$ stays within an interval $I$ of size $1/(2N)$. Of
course, future nodes might be indexed by Cauchy sequences $s$
extending $r$ that go outside of $I$, but only by an infinitesimal
amount. So let $J$ extend $I$ on either side and still have length
less that $1/N$. Then ($r \upharpoonright M, J$) is compatible
with $r$, and forces that $Z$ beyond $M$ stay in $J$, making $Z$
``Cauchy for $1/N$", to coin a phrase.
\end{proof}

In order to complete the theorem, we need only prove the following
\begin{proposition}
For all nodes r, r $\models$ ``Z has no modulus of convergence."
\end{proposition}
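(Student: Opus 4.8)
The plan is to mimic, in the Kripke setting, the topological argument from Section 2: assume toward a contradiction that some condition forces ``$f$ is a modulus of convergence for $Z$'', and then build an explicit Cauchy sequence $r'$ at some extending node along which $f$ cannot possibly be a modulus. Concretely, suppose $(p,I) \Vdash$ ``$f$ is a modulus of convergence for $Z$''. WLOG $I$ is a finite interval. Pick $n$ with $1/n$ less than the length of $I$, and set $\epsilon = (\mathrm{length}(I) - 1/n)/2$. Since $(p,I)$ forces $f(n)$ to be defined, passing to a cover and then to one member of it, find $(q,J) \leq (p,I)$ forcing a specific value $f(n) = m$; WLOG $\mathrm{length}(q) > m$. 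The point of forcing $f(n)=m$ is that it pins down which pairs of future entries of $Z$ are allowed to differ: any extension of $(q,J)$ that adds two entries more than $1/n$ apart at positions $> m$ will force ``$f$ is not a modulus'', contradicting the assumption once we exhibit a compatible real.

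Next I would run the same ``marching toward an endpoint / turning around'' construction as in the topological proof. Starting from $(q,J)$, repeatedly extend the finite function by the midpoint of the current interval (or, in Case I, by its right endpoint, extending the interval out to $j_{\max}$, the sup of all right-endpoints of intervals $K$ for which $(q_0 \text{ extended}, K)$ still forces $f(n)=m$), always staying within $(p,I)$ and always keeping the value $f(n) = m$ forced. If the right endpoint ever threatens to exceed $\mathrm{sup}(I) - \epsilon$, ``turn around'' and march left instead; if after turning around the left endpoint drops below $\mathrm{inf}(I) + \epsilon$, then we have explicitly placed two entries more than $1/n$ apart beyond position $m$, blowing $f$ as a modulus (Case A). In the remaining case (Case B) the construction runs for infinitely many stages, monotonically and boundedly, so the $q_k$'s converge to a Cauchy sequence $X$; crucially $X$ lies in $(p,I)$, so $X$ is compatible with $(p,I)$, and some condition $(q',K)$ compatible with $X$ forces $f(n) = m'$ for some $m'$. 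Comparing via a common extension with some $(q_k, J_k)$ forces $m = m'$, which puts us in Case I at that stage, and then $J_{k+1} = (\mathrm{sup}\,J_k, j_{\max})$ with $j_{\max} \geq \mathrm{sup}\,K > \lim X = \lim_k \mathrm{sup}(J_k) \geq \mathrm{sup}(J_{k+1}) = j_{\max}$, a contradiction.

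The real work is translating ``blows $f$ being a modulus'' and ``some condition compatible with $X$ forces $f(n)=m'$'' into the Kripke language. For the first, I would use the Truth Lemma (Lemma \ref{helpful lemma}, part 4): if $(q_k, J_k)$ extended by two far-apart entries forces the negation of the modulus statement, then at the node indexed by any compatible real $r'$ the model satisfies that $f$ is not a modulus; but that node extends $r$ and $r \models$ ``$f$ is a modulus'', contradiction — here I use that forcing is monotone downward and that truth at a node is witnessed by some compatible condition. For the second, I again invoke the Truth Lemma together with the fact that $X$, being compatible with $(p,I) \Vdash$ ``$f$ is a modulus'', names a genuine node of the tree (after truncating at a nonstandard place and extending infinitesimally if necessary, per the construction's side condition on which reals index nodes), so $X \models$ ``$f(n)$ is defined'', hence some condition compatible with $X$ forces a value.

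The main obstacle I anticipate is bookkeeping around the side condition on nodes — namely that a real indexing an extending node must agree with its predecessor except for entries infinitesimally close to the predecessor's limit. The finite functions $q_k$ produced by the midpoint/turn-around construction all live over the \emph{standard} reals and their limit $X$ is an honest standard Cauchy sequence, so $X$ (or a node-legal modification of it) is compatible with $(q,J)$ and hence with $(p,I)$; I must check that the construction never needs a real that fails the side condition, and that ``$(q_k, J_k)$ forces $f(n)=m$'' together with ``$(q',K)$ forces $f(n)=m'$'' can be combined by taking the glb $(q_k \cup q', J_k \cap K)$, which is legitimate since the two are compatible (their finite-function parts agree up to $\mathrm{length}(q')$ because $q_k \supseteq q'$, and $\mathrm{sup}(J_k) \in K$). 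Modulo that, the argument is the Section 2 proof read through the Kripke forcing relation, so I would keep the prose parallel to that proof and simply flag each appeal to the Truth Lemma.
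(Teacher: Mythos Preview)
Your approach is essentially identical to the paper's: same choice of $n$ and $\epsilon$, same Case I/II marching construction with turn-around, same Case A/B endgame, and the same final $j_{\max}$ contradiction. Your anticipated obstacle about the side condition is a non-issue --- the constructed $X$ is used directly as a bottom-level node in its own right (every Cauchy sequence in $M_0$ is one, and by elementarity the whole forcing argument can be run there), so no truncation or infinitesimal modification is needed; likewise in Case A the contradiction already lives at the forcing level (the blowing condition extends $(p,I)$ and hence forces both ``$f$ is a modulus'' and its negation), so you need not and should not claim that the witnessing node extends $r$.
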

\begin{proof}
Suppose $r \models ``f$ is a modulus of convergence for $Z$." Let
$(p, I)$ compatible with $r$ force as much. WLOG $I$ is a finite
interval. Let $n$ be such that $1/n$ is less than the length of
$I$, and let $\epsilon$ be (length($I) - 1/n)/2$.

Let $(q, J) \leq (p, I)$ force a value $m$ for $f(n)$. WLOG
length$(q) > m$, as $q$ could be so extended. If $J=I$, then $(q,
J)$ could be extended simply by extending $q$ with two values a
distance greater than $1/n$ apart, thereby forcing $f$ not to be a
modulus of convergence. So $J \subset I$. That means either inf
$J$ $>$ inf $I$ or sup $J <$ sup $I$. WLOG assume the latter. Let
mid $J$ be the midpoint of $J, \; q_0$ be $q$ extended by mid $J$,
and $J_0$ be (mid $J$, sup $J$). Then $(q_0, J_0) \leq (q, J)$,
and therefore $(q_0, J_0) \Vdash f(n) = m$.

What $(q_1, J_1$) is depends:

CASE I: There is an open set $K$ containing sup($J_0$) such that
$(q_0, K) \Vdash f(n) = m$. Then let $j_{max}$ be the sup of the
right-hand endpoints (i.e. sups) of all such $K$'s. Let $q_1$ be
$q_0$ extended by sup $J_0$.

\underline{Claim}: $(q_1, (\sup J_0, j_{max})) \Vdash f(n) = m.$

\begin{proof}
Let $r$ be any Cauchy sequence compatible with $(q_1$, (sup J$_0,
j_{max}$)). Since lim $r < j_{max}$, $r$ (that is, rng($r
\backslash q_0$)) is actually bounded below $j_{max}$. By the
definition of $j_{max}$, there is an open $K$ containing sup $J_0$
such that $r$ is bounded by sup $K$. As $r$ is bounded below by
sup $J_0$, $r$ (again, rng($r \backslash q_0$)) is contained
within $K$. As ($q_0, K) \Vdash f(n) = m, \; r \models f(n) = m$.
\end{proof}
Of course, $J_1$ will be (sup $J_0, j_{max}$).

CASE II: Not Case I. Then extend by the midpoint again. That is,
$q_1$ is $q_0$ extended by mid $J_0$, and $J_1$ is (mid $J_0$, sup
$J_0$). Also in this case, $(q_1, J_1) \Vdash f(n) = m$.

Clearly we would like to continue this construction. The only
thing that might be a problem is if the right-hand endpoint of
some $J_k$ equals (or goes beyond!) sup $I$, as we need to stay
beneath $(p, I)$. In fact, as soon as sup($J_k) >$ sup$(I) -
\epsilon$ (if ever), extend $q_k$ by something within $\epsilon$
of sup $I$, and continue the construction with left and right
reversed. That is, instead of going right, we now go left. This is
called ``turning around".

What happens next depends.

CASE A: We turned around, and after finitely many more steps, some
$J_k$ has its inf within inf($I) + \epsilon$. Then extend $q_k$ by
something within $\epsilon$ of inf $I$. This explicitly blows $f$
being a modulus of convergence for $Z$.

CASE B: Not Case A. So past a certain point (either the stage at
which we turned around, or, if none, from the beginning) we're
marching monotonically toward one of $I$'s endpoints, but will
always stay at least $\epsilon$ away. WLOG suppose we didn't turn
around. Then the construction will continue for infinitely many
stages. The $q_k$'s so produced will in the limit be a (monotonic
and bounded, hence) Cauchy sequence $r$. Furthermore, lim $r$ is
the limit of the sup($J_k$)'s. Finally, $r$ is compatible with
$(p, I)$. Hence $r \models ``f$ is total", and so $r \models f(n)
= m'$, for some $m'$. Let some condition compatible with $r$ force
as much. This condition will have the form $(q', K)$, where lim $r
\in K$. Let $k$ be such that sup($J_k) \in K$, and $q_k \supseteq
q'$. Consider $(q_k, J_k$). Note that $(q_k, J_k \cap K)$ extends
both $(q_k, J_k$) and $(q', K)$, hence forces both $f(n) = m$ and
$f(n) = m'$, which means that $m=m'$.

Therefore, at this stage in the construction, we are in Case I. By
the construction, $J_{k+1} = (\sup J_k, j_{max}$), where $j_{max}
\geq$ sup $K >$ lim $r$ = lim$_k (\sup(J_k)) \geq \sup(J_{k+1}) =
j_{max}$, a contradiction.
\end{proof}

\section{Not every Cauchy sequence of Cauchy sequences converges}

\begin{theorem} \label{thm2}
IZF$_{Ref}$ does not prove that every Cauchy sequence of Cauchy
sequences converges to a Cauchy sequence.
\end{theorem}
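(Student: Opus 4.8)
The plan is to repeat the strategy of Section~2: build a topological model $M$ over a Heyting algebra whose points \emph{are} the objects to be ruled out, so that $M \models$ IZF$_{Ref}$ holds automatically by \cite{G}. The space $T$ will consist of all sequences $X = \langle X_n \mid n \in {\mathbb N}\rangle$ such that each $X_n$ is a Cauchy sequence of rationals and the sequence of reals $\langle \lim X_n \mid n \in {\mathbb N}\rangle$ is itself Cauchy. A basic open set $(s,N,J)$ will carry (i) a finite partial specification $s = \{(n_i,p_i,I_i) \mid i<k\}$ which, exactly as in Theorem~1, prescribes for finitely many indices $n_i$ an initial segment $p_i \subseteq X_{n_i}$, the requirement $\mathrm{rng}(X_{n_i}\setminus p_i)\subseteq I_i$, and $\lim X_{n_i} \in I_i$; and (ii) an ``outer corridor'' given by an integer $N$ and a rational interval $J$ imposing $\lim X_n \in J$ for all $n \ge N$. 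Closure under intersection is checked coordinatewise as in Section~2. Let $Z$ be the canonical term interpreted at the point $X$ as $X$ itself, and write $Z_n$ for its $n$-th component.

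First I would check $\|$Z is a Cauchy sequence of Cauchy sequences of rationals$\| = T$. That each $Z_n$ is a Cauchy sequence of rationals is the argument of Section~2 repeated inside the $n$-th coordinate, with $I_{n}$ playing the role of the interval $I$ there. That $\langle Z_n\rangle$ is Cauchy follows from the outer corridor: given $1/N$, any point $X$ is by definition eventually confined to a real interval of length $<1/(2N)$; widening it to absorb possible infinitesimal overshoot, as in Section~2, gives an open set around $X$ forcing ``$Z$ is Cauchy for $1/N$''. I would also record --- by a variant of the argument of Theorem~1 run on the outer corridor, the new interval bounds $I_n$ supplying the room to place late limits where one wishes --- that $\|$Z has no modulus of convergence$\| = T$; hence if $Z$ converged to some $W$ with a modulus of convergence, the triangle inequality would furnish a modulus for $Z$, a contradiction, so the remaining task is to exclude limits with no modulus.

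The heart of the matter is to show that no condition forces ``$W$ is a rational Cauchy sequence and $Z \to W$''. Equivalently, writing $L$ for $\lim_n \lim Z_n$ (which is a Dedekind real, the Dedekind reals being Cauchy complete), one must show $\|$L is not a Cauchy real$\| = T$: a limit $W$ of $Z$ would force $\lim W = L$, and conversely any $W$ with $\lim W = L$ is a limit of $Z$. So suppose $(p,I,N,J) \Vdash$ ``$W$ is a rational Cauchy sequence and $Z \to W$''. I would run a wandering construction in the style of Section~2. At stage $k$ one has a condition $c_k$ with outer corridor $(N_k,J_k)$ forcing $\lim W = L \in \overline{J_k}$; one pushes the corridor toward one endpoint of $J_k$ --- using the inner intervals $I_n$ to wander the late $X_n$ without committing their limits --- ``turning around'' whenever the corridor threatens to leave $(p,I,N,J)$, and at each stage refining to force a fresh value of $W$ at a coordinate past $W$'s own Cauchy-settling time for a precision comparable to $|J_k|$, which pins $L$ into a small interval about that value. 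If the chain never terminates (the analogue of Case~B of Section~2), the $c_k$'s converge to a point $X^*$ of $T$, at which $W$ is forced Cauchy with $\lim W = L$; the resulting geometric configuration --- the interval into which $W$'s committed values force $L$, against the endpoint of the corridor toward which the construction has driven $L$ --- produces a contradiction parallel to the inequality $j_{max} \ge \sup K > \lim X \ge \sup J_{k+1} = j_{max}$ of Section~2.

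The step I expect to be the main obstacle is precisely this last one: over a single finite condition a sufficiently clever $W$ can always ``track'' the provisional location of $L$, so no contradiction is visible at any finite stage, and it must be manufactured at a limit point of the wandering. Getting the bookkeeping right --- which coordinate of $W$ to pin at each stage, how fast to shrink the corridor relative to the precision at which $W$ is being pinned, and when to turn around so as both to stay beneath $(p,I,N,J)$ and to force the limit real to overshoot what $W$'s committed values predict --- is where the full analogue of the Case~I / Case~II / turning-around analysis of Section~2 is needed, now carried simultaneously on the inner sequences and on the outer corridor.
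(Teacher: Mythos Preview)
Your framework matches the paper's: the space is Cauchy sequences of Cauchy sequences (the paper indexes the inner constraints by an initial segment $j < n_p$ rather than an arbitrary finite set, but this is cosmetic), the canonical term $Z$ is forced to be such a sequence, and the task is to show that no condition forces any rational Cauchy sequence $f$ to be a limit of $Z$. You are also right that the crux is the last step and that a clever $f$ can track the provisional $L$ at every finite stage. (A side remark: your triangle-inequality reduction is not right as stated --- a modulus for $W$ as a sequence of rationals does not by itself yield a modulus for the outer sequence $Z$, since you have no modulus for the convergence $Z_n \to W$ --- but this is harmless, as you then attack the general case anyway.)

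Where the proposal is thin is exactly in that general case. You plan to run the Section~2 wandering on the outer corridor, but you give no mechanism for preserving already-forced values of $f$ when you move the corridor: enlarging $I_q$ is not an extension in the order, so forcing is in general lost. The paper supplies the mechanism with two notions absent from Section~2: a \emph{$j$-extension} $q \leq_j p$ (all clauses of $q \subseteq p$ hold except possibly at the $j$-th inner component) and an \emph{$\infty$-extension} $q \leq_\infty p$ (all clauses hold except possibly $I_q \subseteq I_p$), together with lemmas saying that if $q \subseteq p$ forces $f(n)=m$, then for any target $x$ in $I_j$ (resp.\ $I_p$) one can pass to an $r \leq_j^* q$ (resp.\ $r \leq_\infty^* q$) still inside $p$, still forcing $f(n)=m$, with $x$ in the relevant interval of $r$. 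These lemmas are what let one drag intervals across $I_p$ without losing pinned values; their proofs are themselves Section~2-style half-the-gap marches, but run one coordinate at a time while freezing the others.

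The final construction is also not the monotone-march-and-turn-around of Section~2 that you envisage. The paper fixes a target $L$ in the top half of $I_p$, shrinks the outer interval to the bottom third, and at each stage asks whether one can extend to force a \emph{new} value of $f$ landing in the top half. If yes, do so. If no, force a new value (necessarily in the bottom half), then $\infty$-extend to put $L$ in the outer interval, $j$-extend across each newly created inner coordinate to put $L$ there too (recording this side condition as $r_k$), and finally $\infty$-extend back down into the bottom third to continue the main chain. One obtains two chains $\langle p_k\rangle$ and $\langle r_k\rangle$ and a dichotomy: if the ``no'' branch occurs only finitely often, the tail of $\langle p_k\rangle$ nests and converges to a point at which $f$ has arbitrarily late values in the top half while $\lim Z$ is trapped in the bottom third; if it occurs infinitely often, the $r_k$'s nest and converge to a point at which $\lim Z$ is near $L$ while every pinned value of $f$ lies in the bottom half. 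Either way some condition forces $f \ne \lim Z$. Your sketch anticipates neither the extension lemmas nor this two-track argument, so while the outline is sound, the decisive technical ideas are still to be supplied.
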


The statement of the theorem itself needs some elaboration. The
distance $d(x_{0n}, x_{1n})$ between two Cauchy sequences $x_{0n}$
and $x_{1n}$ is the sequence $\mid x_{0n} - x_{1n} \mid$. $x_{0n}
< x_{1n}$ if there are $m, N \in {\mathbb N}$ such that for all $k
> N \; x_{0k} + 1/m < x_{1k}$. A rational number $r$ can be
identified with the constant Cauchy sequence $x_{n} = r. \; x_n$ =
0 if $\forall m \; \exists N \; \forall k > N \; \mid x_k \mid <
1/m$. $x_{0n}$ and $x_{1n}$ are equal (as reals, equivalent as
Cauchy sequences if you will) if $d(x_{0n}, x_{1n})$ = 0. With
these definitions in place, we can talk about Cauchy sequences of
Cauchy sequences, and limits of such. The theorem is then that it
is consistent with IZF$_{Ref}$ to have a convergent sequences of
Cauchy sequences with no limit.

Note that we are not talking about reals! A real number would
be an equivalence class of Cauchy sequences (omitting, for the
moment, considerations of moduli of convergence). It would be weaker
to claim that the sequence of reals represented by the constructed
sequence of sequences has no limit. After all, given a sequence of
reals, it's not clear
that there is a way to choose a Cauchy sequence from each real. We
are claiming here that even if your task is made easier by being
handed a Cauchy sequence from each real, it may still not be
possible to get a ``diagonalizing", i.e. limit, Cauchy sequence.

\subsection{The Topological Space and Model}

Let $T$ be the space of Cauchy sequences of Cauchy sequences. By
way of notation, if $X$ is a member of $T$, then $X_j$ will be the
$j^{th}$ Cauchy sequence in $X$; as a Cauchy sequence of
rationals, $X_j$ will have values $X_j(0), X_j$(1), etc. Still
notationally, if $X_n \in T$, then the $j^{th}$ sequence in $X_n$
is $X_{nj}$. In the classical meta-universe, the Cauchy sequence
$X_j$ has a limit, lim($X_j$); in addition, the sequence $X$ has a
limit, which will be written as lim($X$).

A basic open set $p$ is given by a finite sequence $\langle (p_j,
I_j) \mid j<n_p \rangle$ of basic open sets from the space of the
previous theorem (i.e. $p_j$ is a finite sequence of rationals and
$I_j$ is an open interval), plus an open interval $I_p$. $X \in p$
if $X_j \in (p_j, I_j)$ for each $j < n_p$, if lim($X_j$) $\in
I_p$ for each $j \geq n_p$, and lim($X$) $\in I$. Note that $q
\subseteq p$ ($q$ \underline{extends} $p$) if $n_q \geq n_p, (q_j,
K_j) \subseteq (p_j, I_j)$ for $j < n_p, K_j \subseteq I_p$ for $j
\geq n_p$, and $I_q \subseteq I_p$.

$p$ and $q$ are compatible (where WLOG $n_p \leq n_q$) if, for $j
< n_p \; (p_j, I_j$) and ($q_j, K_j$) are compatible, for $n_p
\leq j < n_q \; K_j \cap I_p \not = \emptyset$, and $I_q \cap I_p
\not = \emptyset$. In this case, $p \cap q$ is not the basic open
set you'd think it is, but rather a union of such. The problem is
that for $n_p \leq j < n_q$ it would be too much to take the
$j^{th}$ component to be $(q_j, K_j \cap I_p$), because that would
leave out all extensions of $q_j$ with entries from $K_j
\backslash I_p$ before they finally settle down to $K_j \cap I_p$.
So $p \cap q$ will instead be covered by basic open sets in which
the $j^{th}$ component will be ($r_j, K_j \cap I_p$), where ($r_j,
K_j) \subseteq (q_j, K_j$). (So the given basic open sets form not
a basis for the topology, but rather a sub-basis.)

As always, the sets in the induced Heyting-valued model $M$ are of
the form $\{\langle \sigma_{k}, p_{k} \rangle \mid k \in K \}$,
where $K$ is some index set, each $\sigma_{k}$ is a set
inductively, and each $p_{k}$ is an open set. Note that all sets
from the ground model have canonical names, by choosing each
$p_{k}$ to be $T$ (i.e. $n_p$ = 0 and $I_p = {\mathbb R}$),
hereditarily. $M$ satisfies IZF$_{Ref}$.

\subsection{The Extensions $\leq_j$ and $\leq_\infty$}
In the final proof, we will need the following notions.

\begin{definition}
\underline{{\it j}-extension} $\leq_j$: q $\leq_j$ p for some $j <
n_p$ if q and p
satisfy all of the clauses of q extending p except possibly for
the condition on the $j^{th}$ component: (q$_j$, K$_j$) need not
be a subset of $(p_j, I_j)$, although we will still insist that
(q$_j$, I$_j$) be a subset of $(p_j, I_j)$.
\end{definition}
More concretely, $q_j$ comes from $p_j$ by extending with elements
from $I_j$; it's just that we're no longer promising to keep to
$I_j$ in the future. Notice that $\leq_j$ is not transitive; the
transitive closure of $\leq_j$ will be notated as $\leq_j^*$.

\begin{definition}
\underline{$\infty$-extension} $\leq_\infty$: q
$\leq_\infty$ p if q and p satisfy all of the clauses of q
extending p except possibly for the last, meaning that I$_q$ need
not be a subset of I$_p$.
\end{definition}
$\leq_\infty^*$ is the transitive
closure of $\leq_\infty$.

\begin{lemma} Suppose q $\subseteq$ p, q $\subseteq \|$f(n)=m$\|$
for some particular m and n, and $j < n_p$. Then for all x $\in$ I$_j$ there is an
r $\subseteq$ p, r $\leq_j^*$ q such that r $\subseteq \|$f(n)=m$\|$
and x $\in$ L$_j$, where (r$_j$, L$_j$) is r's $j^{th}$ component.
\end{lemma}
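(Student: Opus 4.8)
The plan is to argue by induction on the number of steps $j < n_p$ and the distance we need to "travel" within $I_j$, mimicking the turning-around construction from the proof that $Z$ has no modulus of convergence, but now applied to a single coordinate. The key point is that moving $x$ from wherever it currently sits in $L_j$ to an arbitrary target in $I_j$ must be done in $\leq_j$-steps, i.e.\ by extending $q_j$ with new entries drawn from $I_j$ (not merely from the current $L_j$), and the obstruction — as in Section~2 — is that each such extension must still force $f(n)=m$, which a priori might fail once we leave the interval $L_j$.

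First I would set up the basic move: given $r \subseteq p$ with $r \subseteq \|f(n)=m\|$ and $j$th component $(r_j, L_j)$, and given a target $x \in I_j$ on (say) the right of $\sup L_j$, I extend $r_j$ by $\sup L_j$ (or a point just inside) and widen the $j$th interval to the right as far as possible while still forcing $f(n)=m$. This is exactly the CASE~I analysis: let $j_{\max}$ be the supremum of right endpoints of intervals $K \supseteq L_j$ such that replacing the $j$th component by $(r_j \cup \langle \sup L_j\rangle, K)$ still forces $f(n)=m$; the same union-of-opens argument used in the Claim of Proposition in Section~2 shows the resulting condition, call it $r'$, satisfies $r' \subseteq \|f(n)=m\|$, $r' \leq_j r \subseteq p$ (hence $r' \leq_j^* q$ by transitivity of $\leq_j^*$), and $r' \subseteq p$. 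If $x$ already lies in the new $L_j' = (\sup L_j, j_{\max})$ — or more precisely in the closure issues handled by extending $r_j$ by a point within the target — we are done; otherwise we iterate. The fallback CASE~II (extend by the midpoint) guarantees the construction never stalls. Symmetrically for $x$ to the left of $\inf L_j$, and "turning around" is available since we are working inside the fixed interval $I_j$ on the $j$th coordinate, which gives us room on both sides.

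The main obstacle — and the heart of the argument — is showing the iteration actually reaches $x$ after finitely many $\leq_j$-steps, rather than marching monotonically forever while staying bounded away from $x$. This is precisely the dichotomy CASE~A / CASE~B from Section~2. In CASE~B the $r_j$-parts accumulate to a Cauchy sequence $s_j$ whose limit is the sup (or inf) of the intervals produced; splicing $s_j$ into the $j$th coordinate of $r$ yields a point $X$ of the space compatible with $p$, so $X \models ``f(n) = m'"$ for some $m'$, and comparing with the forced value $m$ along any condition compatible with $X$ forces $m = m'$. Then the same contradiction as before applies: we must be in CASE~I at that stage, but CASE~I yields $j_{\max}$ strictly above $\lim s_j$, which is itself the sup of the very intervals bounded by $j_{\max}$ — impossible. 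Hence CASE~B cannot persist, the construction terminates, and at termination the target $x$ lies in the final $L_j$. I should also note the bookkeeping point that each individual move is a $\leq_j$-extension \emph{and} keeps us $\subseteq p$ (we never touch coordinates other than $j$, and on coordinate $j$ we only add points from $I_j$ while keeping the interval inside $I_j$ until the final widening, where staying inside $I_j$ is exactly what the choice of target ensures), so the composite is $\leq_j^* q$ with $r \subseteq p$ as required.
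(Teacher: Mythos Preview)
Your outline has the right flavor but contains a genuine gap at the CASE~B step, stemming from the claim ``we never touch coordinates other than $j$.'' The relation $\leq_j$ \emph{does} allow shrinking the other coordinates (only the $j^{\rm th}$ coordinate is exempted from the usual extension clause), and the paper's proof uses this essentially. If you literally freeze all coordinates $k\neq j$ at $(q_k,K_k)$ throughout, then your CASE~I/CASE~II dichotomy is phrased as: ``is there a $K$ past $\sup L_j$ such that replacing \emph{only} the $j^{\rm th}$ component forces $f(n)=m$?'' In CASE~B you produce a limit Cauchy sequence $s_j$ and splice it into some $X\in p$; a neighborhood $s$ of $X$ then forces $f(n)=m$. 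But $s$ will in general have strictly smaller components at coordinates $k\neq j$ than $q$ does, so $s$ does \emph{not} witness your CASE~I at any stage (your CASE~I demands the other coordinates stay equal to $(q_k,K_k)$). The contradiction you borrow from Section~2 therefore does not fire: nothing prevents the construction from being in CASE~II forever while a witnessing neighborhood of the limit exists only after shrinking the side coordinates.

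The paper avoids this by letting the side coordinates shrink at every step and \emph{dovetailing} so that all coordinates (not just the $j^{\rm th}$) converge to a single point $X\in T$. At each stage one takes a $j$-extension whose $j^{\rm th}$ interval reaches at least half as far right as possible (among all $j$-extensions inside $p$ forcing $f(n)=m$, with side coordinates allowed to shrink). If the target $x$ is never covered, the limit $X$ lies in $p$; a neighborhood $r$ of $X$ forces $f(n)=m$ and has $j^{\rm th}$ interval overshooting $\lim X_j$ by some $\epsilon>0$; at a late enough stage $r$ itself was an available choice, contradicting the half-maximality of the step taken. Two further remarks: the ``turning around'' machinery from Section~2 is unnecessary here, since you have a fixed target $x$ and can assume WLOG $x\geq\sup K_j$; and the ``$K\supseteq L_j$'' in your CASE~I should be ``$K$ containing $\sup L_j$,'' matching Section~2.
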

\begin{proof}
If $x \in K_j$, then we are done: let $r$ be $q$. So assume WLOG
that $x \geq \sup(K_j$). The inspiration for this construction is
the construction of the previous theorem. The main difference is
that not only do we have ($q_j, K_j$) to contend with, we also
have all of $q$'s other components around. Hence the notion of a
$j$-extension: we will do the last theorem's construction on the
$j^{th}$ coordinates, and leave all the others alone.

First off, we would like to show that $q$ has a $j$-extension $q'
\subseteq p$ also forcing $f(n)=m$ such that sup($K_j) \in K'_j$.
Toward this end, let $X \in T$ be a member of (the open set
determined by) $q$ except that lim($X_j$) = sup($K_j$). $X$ is in
$p$, so there is some $r \subseteq p$ such that $X \in r$ and $r$
forces a value for $f(n)$, say $m'. \; q$ and $r$ are compatible
though: apart from the $j^{th}$ component, $X$ is in both, and the
only thing happening in the $j^{th}$ component is that, in $r$,
sup($K_j$) $\in L_j$, meaning that $K_j$ and $L_j$ overlap. So any
common extension of both $q$ and $r$ would have to force $f(n)=m$
and $f(n)=m'$; since $p$ already forces that $f$ is a function,
$m=m'$. Using $r$, it is easy to construct the desired $q'$: take
the $j^{th}$ component from $r$, and let each other component be
the intersection of the corresponding components from $r$ and $q$.

If there is such a $q'$ such that $x \in K'_j$, then we are done.
Else we would like to mimic the last theorem's construction by
having in our next condition the interval part of the $j^{th}$
component be (sup($K_j$), $j_{max}$) (for a suitably defined
$j_{max}$). The problem is, $q$ has all these other components
around. For any real $y < j_{max}$ we could find a $j$-extension
of $q$ with (sup($K_j$), $y$) in the $j^{th}$ component, but not
necessarily for $y = j_{max}$ itself.

To this end, consider all such $q'$ as above. Each $q'$ can be
extended (to say $q''$) by restricting the interval in the
$j^{th}$ component to (sup($K_j$), sup($K'_j$)). Let $q_1$ be such
a $q''$ where that interval is at least half as big as possible
(i.e. among all such $q''$, where of course sup($K'_j$) has to be
bounded by sup($I_j$)).

Continue this construction so that $q_n$ is defined from $q_{n-1}$
just as $q_1$ was defined from q. WLOG dovetail this construction
with extending all other components so that after infinitely many
steps we would have produced an $X \in T$. (This remark needs a
word of justification about the $j^{th}$ components. By the
definition of $j$-extension alone, it is not clear that a sequence
of $j$-extending conditions $q_0 \geq_j q_1 \geq_j$ ... converges
to a point in $T$. In our case, though, by the construction
itself, the various $K_{nj}$'s are monotonically increasing and
bounded, hence the $X_j$ so determined is Cauchy.)

If at some finite stage we have covered $x$, then we are done. If
not, then sup($X_j$) = sup$_n$(sup($K_{nj}$)) $\leq x \in I_j$, so
that $X \in p$. So there is some $r \subseteq p$ with $X \in r$
such that $r$ forces a value for $f(n)$, say $m'$. Let $\epsilon$
be sup($L_j) - \sup(X_j$). Eventually in the construction,
$K_{nj}$ will be contained within $\epsilon$ of sup($X_j$). With
$r$ as the witness, at the next stage $K_{(n+1)j}$ would go beyond
sup($X_j$), which is a contradiction. Hence this case is not
possible, and at some finite stage we must have covered $x$, as
desired.

\end{proof}

We have a similar lemma for $\infty$-extensions.

\begin{lemma} Suppose q $\subseteq$ p and q $\subseteq \|$f(n)=m$\|$
for some particular m and n. Then for all x $\in$ I$_p$ there is an
r $\subseteq$ p, r $\leq_\infty^*$ q such that r $\subseteq \|$f(n)=m$\|$
and x $\in$ I$_r$.
\end{lemma}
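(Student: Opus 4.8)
The statement is the $\infty$-extension analogue of the $j$-extension lemma just proved, and the plan is to run essentially the same construction, but on the ``interval at infinity'' component $I_p$ rather than on a $j^{th}$ coordinate. The difference is that modifying $I_q$ (enlarging it past $I_p$, which is what $\leq_\infty$ allows) affects all the tail entries of $X$ simultaneously — every $X_j$ with $j \geq n_q$, plus $\lim(X)$ — whereas a $j$-extension only touched one coordinate. So the bookkeeping is heavier, but the engine is the same monotone-approximation-to-a-point argument.

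First I would dispose of the trivial case: if $x \in I_q$ we take $r = q$. Otherwise WLOG $x \geq \sup(I_q)$. The first step mirrors the previous lemma: I would show $q$ has an $\infty$-extension $q' \subseteq p$, still forcing $f(n)=m$, with $\sup(I_q) \in I_{q'}$. To get this, pick $X \in T$ lying in $q$ except that $\lim(X) = \sup(I_q)$ and all the ``free'' tail sequences $X_j$ ($j \geq n_q$) also limit to $\sup(I_q)$; since $X \in p$, some $r \subseteq p$ with $X \in r$ forces a value $m'$ for $f(n)$; because $\sup(I_q) \in I_r$, $I_q$ and $I_r$ overlap (and the finitely many named components of $q$ and $r$ are compatible since $X$ is in both), so $q$ and $r$ have a common extension, forcing $m = m'$. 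From $r$ I build $q'$ by taking its interval-at-infinity component and intersecting the finitely many named components of $q$ and $r$ coordinatewise.

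Next, if some such $q'$ already has $x \in I_{q'}$ we are done. Otherwise I iterate: among all $\infty$-extensions $q'$ of $q$ as above, each can be cut down (an ordinary extension) by restricting the infinity-interval to $(\sup(I_q), \sup(I_{q'}))$; let $q_1$ be one where this interval is at least half as long as the supremum of the possible lengths (bounded above by $\sup(I_p)$). Define $q_{n}$ from $q_{n-1}$ the same way, dovetailing with extending all the named components and the (now increasingly many) tail components so that after $\omega$ steps a genuine point $X \in T$ is produced. Here I need the same Cauchy-ness remark as before: the intervals $I_{q_n}$ are monotone increasing and bounded, so the resulting $\lim(X)$-component and each tail sequence converges; one has to check the named components $q_{nj}$ also settle down, which they do since they form a decreasing chain of ordinary extensions. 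If $x$ gets covered at a finite stage, done. If not, then $\sup_n \sup(I_{q_n}) \leq x \in I_p$, so $X \in p$, so some $r \subseteq p$ with $X \in r$ forces $f(n) = m'$; setting $\epsilon = \sup(I_r) - \sup(X)$'s-limit, eventually $I_{q_n}$ lies within $\epsilon$ of that limit, and then the construction of $q_{n+1}$ via $r$ would push $\sup(I_{q_{n+1}})$ strictly past it, contradicting that it is bounded by the limit. Hence this case cannot occur, so $x$ is covered at a finite stage, giving the desired $r \leq_\infty^* q$ with $r \subseteq \|f(n) = m\|$ and $x \in I_r$.

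The main obstacle I anticipate is not the logical skeleton — that is copied from the $j$-extension lemma — but verifying that the $\omega$-step dovetailed construction genuinely lands in $T$, i.e. that the object $X$ built by simultaneously fiddling with infinitely many tail components and the infinity-interval while repeatedly enlarging it really is a Cauchy sequence of Cauchy sequences. One must ensure each tail $X_j$ is assigned a coherent limiting value and that $\lim(X)$ exists; monotonicity of the $I_{q_n}$'s handles the ``at infinity'' part, but the individual tail sequences need to be steered (this is the ``WLOG dovetail'' clause) so that each becomes Cauchy with the right limit. Once $X \in T$ is secured the compatibility-and-function argument that forces $m = m'$ is routine, exactly as in the preceding lemma.
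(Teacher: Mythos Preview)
Your proposal is correct and takes essentially the same approach as the paper, whose entire proof reads ``Similar to the above.'' You have faithfully transported the boundary-point/half-the-possible-increment argument from the $j^{th}$ coordinate to the interval-at-infinity component, and you correctly flag the one genuinely new bookkeeping issue (that moving $I_q$ drags along all tail sequences $X_j$ with $j\geq n_q$, so the dovetailing must steer each of them to a limit while keeping the outer sequence Cauchy); this is handled, as you note, by choosing the tail limits to approach $\sup(I_q)$ monotonically from inside $I_q$.
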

\begin{proof}
Similar to the above.
\end{proof}

Observe that the same arguments work for preserving finitely many
values of $f$ simultaneously.

\subsection{The Final Proof}
We are interested in the canonical term $\{ \langle {\bar p_j}, p
\rangle \mid p$ is an open set$\}$, where ${\bar p_j}$ is the
canonical name for the sequence $\langle p_j \mid j<n_p \rangle$
from $p$. We will call this term $Z$. It should be clear that $T =
\| Z$ is a Cauchy sequence of Cauchy sequences$\|$. Hence we need
only prove
\begin{proposition} T = $\|$Z does not have a limit$\|$.
\end{proposition}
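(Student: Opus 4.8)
The plan is to argue by contradiction, exactly mirroring the architecture of the proof of the analogous proposition for Theorem~\ref{thm1}, but now carried out on the ``$\infty$-coordinate'' of conditions in the new space, with the $j$-extension and $\infty$-extension lemmas doing the work that simple interval-refinement did before. So suppose $p \subseteq \| L$ is a limit of $Z \|$ for some term $L$ and some basic open $p$; WLOG $I_p$ is a finite interval. Since $L$ is forced to be a Cauchy sequence, it in particular has some putative modulus-like behavior we can exploit: choose $n$ with $1/n$ less than the length of $I_p$, set $\epsilon = (\mathrm{length}(I_p) - 1/n)/2$, and pass to $q \subseteq p$ forcing that, say, $\mid Z_k - L \mid < 1/(3n)$ for all $k \geq$ some value $N$ (this is what ``$L$ is a limit'' gives us, together with $L$ being Cauchy we may also fix a rational approximation). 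The point of such a $q$ is that it pins down, up to $1/n$, where $\mathrm{lim}(Z_k)$ must lie for large $k$ in terms of a rational that $q$ decides — so $q$ behaves like ``$f(n) = m$'' did in the earlier proof, except the quantity being controlled is now $I_q$, the interval governing the tails of the outer sequence.

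Next I would run the ``marching'' construction on $I_q$: repeatedly replace $q$ by an $\infty$-extension whose outer interval has been pushed toward one endpoint of $I_p$, using the second lemma (the $\infty$-extension lemma) to guarantee at each step that we can move the outer interval to include any prescribed point of $I_p$ while still forcing the same approximation data for $L$. Case~I / Case~II (is there a maximal such push, or only midpoint steps), ``turning around'' when we get within $\epsilon$ of $\sup I_p$, and Cases~A/B (do we eventually get within $\epsilon$ of both endpoints, or march monotonically forever) are all as before. In Case~A we have forced the tails of $Z$ to concentrate near two points of $I_p$ more than $1/n$ apart while simultaneously forcing $\mid Z_k - L\mid$ small — but $L$ is a single Cauchy sequence, so it cannot be within $1/(3n)$ of two rationals $> 1/n$ apart; this explicitly blows the hypothesis. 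In Case~B, the $q_k$'s converge to an actual point $X \in T$ lying in $p$, so $X \models ``L$ is a limit of $Z$'', hence some condition compatible with $X$ forces the relevant approximation value $m'$; compatibility of $(q_k$-like conditions$)$ with that witness forces $m = m'$, which places us back in Case~I at that stage, and the inequality $j_{max} \geq \sup K > \mathrm{lim}(X) \geq \sup(I_{q_{k+1}}) = j_{max}$ gives the contradiction, just as in Section~2.

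The subtlety specific to this space — and the main obstacle — is that an $\infty$-extension only loosens the constraint on $I_q$, but the \emph{conclusion} we need (that $Z$ has no limit) is about the outer sequence's tails, which are governed precisely by $I_q$ together with the behavior at coordinates $j \geq n_q$. So I must be careful that ``pushing $I_q$ toward an endpoint of $I_p$'' genuinely corresponds to forcing $\mathrm{lim}(Z_k)$ (for large $k$) to lie near that endpoint — this is where the definition $X \in p$ requiring $\mathrm{lim}(X_j) \in I_p$ for $j \geq n_p$ is used — and conversely that the witness conditions $(q', K)$ produced by ``$X \models L$ is a limit'' really do constrain $I_{q'}$ and not just finitely many coordinates. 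A second point requiring care is the dovetailing: as in the $j$-extension lemma, a descending $\leq_\infty$-chain need not converge in $T$ on its own, but by making the outer intervals $I_{q_k}$ monotone and bounded (and simultaneously pinning down the inner coordinates), the limit sequence $X$ is genuinely Cauchy and genuinely in $p$. Granting these, the argument is a faithful transcription of the Theorem~\ref{thm1} proposition with ``$f(n) = m$'' replaced by ``the decided rational approximation to $\mathrm{lim}(Z_k)$ forced by $q$ via $L$'' and ``interval refinement'' replaced by ``$\infty$-extension'', so I would not expect further surprises beyond bookkeeping.
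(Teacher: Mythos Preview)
There is a genuine gap in your Case~A, and it comes from over-reading what the $\infty$-extension lemma gives you. That lemma lets you move $I_q$ to contain any prescribed point of $I_p$ while preserving finitely many forced equalities $f(n) = m$; it does \emph{not} preserve arbitrary statements forced by $q$, because an $\infty$-extension is not a refinement in the topology --- $r \leq_\infty^* q$ yields $r \subseteq p$ but not $r \subseteq q$. The data you need to carry through the marching are universal statements: that a specific $N$ witnesses ``$|Z_k - L| < 1/(3n)$ for all $k \geq N$'', and that a specific $M$ witnesses the Cauchy-ness of $L$ so that the single value $L(M) = m$ actually approximates $\lim(L)$. Both are forced by $q$ but not by $p$, and both are lost under $\infty$-extension. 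What survives is only the bare equality $L(M) = m$, which by itself says nothing about where $\lim(L)$ lies. So after turning around and reaching the other endpoint, your final condition $r$ forces $L(M) = m$ and forces various $Z_j$'s to lie near both endpoints of $I_p$, but it does \emph{not} force $\lim(L)$ near $m$, nor does it force $|Z_k - L|$ small for those particular $j$'s (the witness $N$ may now be larger than all of them). The term $L$ is free to let its later values drift along with $Z$; no contradiction arises, and with Case~A empty the Case~A/B dichotomy collapses.

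The paper's argument is organized quite differently, precisely to avoid this trap. Rather than fixing approximation data once and trying to preserve it, the paper forces a \emph{new} value of the putative limit $f$ at every stage and asks whether that new value can be placed in the top half of $I_p$ or must fall in the bottom half. This produces two interleaved chains: the $p_k$'s (eventually genuine refinements, pinning $\lim(Z)$ into the bottom third) and the $r_k$'s (built whenever new values of $f$ fall in the bottom half, using both the $\infty$-extension \emph{and} the $j$-extension lemma to relocate the intervening coordinates near a fixed point $L$ in the top half). The Cauchy-ness of $f$ is invoked only once, at the limit point of whichever chain is infinite, via a straight covering argument rather than along the $\leq_\infty^*$ path. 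Your sketch never substantively uses the $j$-extension lemma and tries to transport Cauchy-type witnesses through $\infty$-extensions where they cannot survive; those are essential differences from the paper's proof, not bookkeeping.
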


\begin{proof}
Suppose $p \subseteq \| f$ is a Cauchy sequence$\|$. It suffices
to show that for some $q \subseteq p, \; q \subseteq \| f \not =$
lim$(Z)\|$.

If $p$ can ever be extended to force infinitely many values for
$f$ simultaneously, then do so, and further extend (it suffices
here to extend merely the last component) to force $Z$ away from
$f$'s limit. This suffices for the theorem.

If this is not possible, then the construction will be to build
one or two sequences of open sets, $p_k$ and possibly $r_k$,
indexed by natural numbers $k$. It is to be understood even though
not again mentioned that the construction below is to be
dovetailed with a countable sequence of moves designed to produce
a single member of $T$ in the end (i.e. each individual component
must shrink to a real as in the previous theorem, the $n_{p_k}$'s
must be unbounded as $k$ goes through ${\mathbb N}$, and the last
components $I_{p_k}$ must shrink to something of length 0).

First, let $p_0$ be built by extending $p$ by cutting $I_p$ to its
bottom third, and let $L$ be some point in $I_p$'s top half. If
$p_0$ can be extended (to $p_1$) so that $f$ is forced to have an
additional value (that is, beyond what has already been forced) in
$I_p$'s top half, then do so. Else proceed as follows. First
extend $p_0$ to force an additional value for $f$, necessarily in
$I_p$'s bottom half. Then by the second lemma above,
$\infty$-extend that latter condition, to $q$ say, preserving the
finitely many values of $f$ already determined, and getting $L$
into $I_q$. Typically $n_q > n_p$, so let $\bar{q}$ be such that
$n_{\bar{q}} = n_q$, if $j < n_p$ then $\bar{q}$'s $j^{th}$
component is the same as $p$'s, if $n_p \leq j < n_q$ then
$\bar{q}$'s $j^{th}$ component is ($\emptyset, I_p$), and
$I_{\bar{q}} = I_p$. Note that $q \subseteq \bar{q} \subseteq p$,
so we can apply the first lemma above to $q$ and $\bar{q}$.
Starting from $q$, iteratively on $j$ from $n_p$ up to $n_q$,
$j$-extend to get $L$ into the interval part of the $j^{th}$
component, while preserving the finitely many values of $f$
already determined. Call the last condition so obtained $r_0$.
Finally, $\infty$-extend $r_0$ to get the last component to be a
subset of $I_{p_0}$, while still preserving $f$ of course. Let
this latter condition be $p_1$.

Stages $k > 0$ will be similar. To start, if possible, extend
$p_k$ to force an additional value for $f$ in $I_p$'s top half.
Call this condition $p_{k+1}$.

If that is not possible, first extend $p_k$ to force a new value
for $f$, necessarily in $I_p$'s bottom half. Then $\infty$-extend
(to $q$ say) to get $L$ into the last component $I_q$. After that,
$j$-extend for each $j$ from $n_{r_i}$ to $n_q$ to get $L$ in
those components, where $i$ is the greatest integer less than $k$
such that $r_i$ is defined. (It bears mentioning that $r_h$ is
defined if and only if at stage $h$ we are in this case.) If need
be, shrink those components to be subsets of $I_{r_i}$, for the
purpose of getting $r_k \subseteq r_i$ (once we define $r_k$).
That last condition will be $r_k$. Next, $\infty$-extend $r_k$ to
get the last component to be a subset $I_{p_k}$. This final
condition is $p_{k+1}$.

This completes the construction.

If the second option happens only finitely often, let $k$ be
greater than the last stage where it happens. Then not only does
$p_k$ force lim($Z$) to be in $I_p$'s bottom third, as all $p_i$'s
do actually, but also $p_k$ is respected in the rest of the
construction: for $i > k$, $p_i \subseteq p_k$. Let $l \geq k$ be
such that $6/l <$ length($I_p$) (i.e. the distance between $I_p$'s
top half and bottom third is greater than $1/l$). Recall that $p
\subseteq \| f$ is a Cauchy sequence$\|$; that is, $p \subseteq \|
\forall \epsilon > 0 \; \exists N \; \forall m, n \geq N \; |f(m)
- f(n)| < \epsilon\|$. Since $1/l >$ 0, $p \subseteq \| \exists N
\; \forall m, n \geq N \; |f(m) - f(n)| < 1/l\|$. That means there
is a cover $C$ of $p$ such that each $q \in C$ forces a particular
value for $N$. Let $S$ be $\bigcap_{j \geq k} p_j$, and let $q \in
C$ contain $S$. Similarly, let ${\hat q}$ containing $S$ force a
value for $f(N)$. $q \wedge {\hat q} \wedge p_k$ is non-empty
because it contains $S$, and $q \wedge {\hat q} \wedge p_k$ forces
by the construction that $f(N)$ is in $I_p$'s top half, by the
choice of $q$ that lim($f$) is away from $I_p$'s bottom third, and
by choice of $k$ that lim($Z$) is in $I_p$'s bottom third. In
short, $q \wedge {\hat q} \wedge p_k \subseteq \| f \not =$
lim($Z)\|$.

Otherwise the second option happens infinitely often. Then we have
an infinite descending sequence of open sets $r_k$, and a similar
argument works. Let $S$ be $\bigcap_j r_j$, where the intersection
is taken only over those $j$'s for which $r_j$ is defined. Let $k$
be such that $r_k \subseteq \|$lim($Z) - $midpoint($I_p) <
\epsilon \|$, for some fixed $\epsilon >$ 0. Let $q$ containing
$S$ be such that, for a fixed value of $N$, $q \subseteq \|
\forall m, n \geq N \; |f(m) - f(n)| < \epsilon\|$. Let ${\hat q}$
force a particular value for $f(N)$, necessarily in $I_p$'s bottom
half. Again, $q \wedge {\hat q} \wedge p_k \subseteq \| f \not =$
lim($Z)\|$.

\end{proof}

\section{The given Cauchy sequence has a modulus, but the limit doesn't}

\begin{theorem} \label{thm3}
IZF$_{Ref}$ does not prove that every Cauchy sequence with a modulus
of convergence of Cauchy sequences converges to a Cauchy sequence
with a modulus of convergence.
\end{theorem}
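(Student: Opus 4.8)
The plan is to combine the two constructions already developed: the space of Theorem \ref{thm2} gives us a Cauchy sequence of Cauchy sequences with no limit, but there the outside sequence $Z$ had no modulus of convergence; and we need to engineer the space so that $Z$ does come equipped with a modulus while still refusing to have a limit-with-modulus. The natural idea is to take a topological space whose points are Cauchy sequences of Cauchy sequences together with a fixed, externally prescribed modulus of convergence on the outside sequence — i.e., the points of $T$ are pairs $(X, g)$ where $X = \langle X_j \mid j \in \mathbb{N}\rangle$ is a Cauchy sequence of Cauchy sequences of rationals and $g$ is a modulus of convergence for $X$ (in the sense that lim$(X_j)$ and lim$(X_k)$ are within $1/n$ whenever $j,k \geq g(n)$). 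Basic open sets are then as in Section 4, namely finite tuples $\langle (p_j, I_j) \mid j < n_p \rangle$ together with an outside interval $I_p$, but now refined by a finite condition on $g$ (a finite partial function from $\mathbb{N}$ to $\mathbb{N}$, compatible with the interval data in the obvious way: if $g(n)$ is decided to be $N$, then all the $I_j$ for $j \geq N$ must already lie within a common interval of length $1/n$). One verifies, exactly as before, that the basic (or sub-basic) open sets are closed under the appropriate intersection operation, so they generate a topology, and the full Heyting-valued model $M$ over it satisfies IZF$_{Ref}$ by the meta-theorem of \cite{G}.

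Next I would introduce the canonical term $Z = \{\langle \bar{p_j}, p\rangle \mid p$ is an open set$\}$ exactly as in Section 4, so that at a point $(X, g)$ the term $Z$ is interpreted as $X$. Because every point of $T$ carries a genuine modulus $g$, and because the basic open sets decide finite fragments of $g$ coherently, one shows $\|Z$ is a Cauchy sequence of Cauchy sequences with a modulus of convergence$\| = T$: given $n$, the open sets deciding $g(n)$ cover $T$, and on each such piece $Z$ is forced to have the corresponding Cauchy-ness witness, so a name for the modulus can be read off. One should double-check that each individual $X_j$ need not have a modulus — indeed we do not want it to, since the middle column of the table is left open — but this is harmless: the theorem only asks that the outside sequence have a modulus.

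The heart of the matter is then the analogue of the final proposition of Section 4: $T = \|Z$ has no limit that is a Cauchy sequence with a modulus of convergence$\|$. I would develop $j$-extension and $\infty$-extension lemmas just as in Section 4.2, now with the extra bookkeeping that the modulus $g$ on the outside sequence must be respected when we extend conditions (this constrains how far out we are allowed to push new outside components before their limits must settle, but since we only ever need to move finitely many components and by infinitesimal amounts in the end, there is always room). Then, assuming some condition $p$ forces "$f$ is a Cauchy sequence with modulus $h$," I would run the same dichotomy: either $p$ can be extended to force infinitely many values of $f$ at once (and then extend the last component to push $Z$'s limit away from $f$'s limit, using $h$ to know how far is enough), or not, in which case I would build the sequence(s) $p_k$ (and possibly $r_k$) of conditions as in Section 4.3, dovetailed so as to produce a genuine point of $T$ in the limit — being careful that this limiting point carries a legitimate outside modulus $g$, which is where the new constraints in the space are used. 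The key extra leverage over Theorem \ref{thm2} is that now $f$ is assumed to have its own modulus $h$, so the "Cauchy sequence" hypothesis on $f$ becomes effective, and the contradiction-extraction at the end (finding $q \wedge \hat{q} \wedge p_k$ forcing $f \not= $ lim$(Z)$) is if anything cleaner, since we get an explicit stage $N = h(l)$ beyond which $f$'s values are trapped near lim$(f)$, away from the third of $I_p$ where lim$(Z)$ has been forced to live.

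The main obstacle I anticipate is verifying that the limiting point produced by the dovetailed construction genuinely lies in the space $T$ — that is, that the outside modulus $g$ we are simultaneously building really is a modulus for the outside sequence $X$ we are building. In Section 4 this issue did not arise because points carried no outside modulus; here, every time we $\infty$-extend to move the outside interval $I_q$ (to capture a point like $L$, or to shrink toward $I_{p_k}$), we risk violating a modulus constraint that has already been committed to. The fix will be to interleave the decisions about $g$ carefully with the extensions — only committing $g(n) = N$ after the first $N$ outside components have been pinned down to a common small interval — and to exploit the fact, already used repeatedly in Sections 3 and 4, that in a non-standard-flavored setting one always has infinitesimal room to maneuver: the finitely many components we touch at any stage can be adjusted by amounts small enough not to disturb any modulus promise. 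Making this interleaving precise, and checking it is consistent with also forcing the finitely-many-values-of-$f$ preservation from the two extension lemmas, is the one place where real care (rather than routine mimicry of Section 4) is needed.
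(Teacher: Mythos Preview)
Your approach differs from the paper's in two ways, and both make your life harder than necessary.

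First, the space. The paper does not let the outside modulus vary over points of $T$. Instead it fixes the convergence function once and for all to be $c(n) = 2^{-n}$ and takes $T$ to be the set of Cauchy sequences of Cauchy sequences whose outside sequence has that particular convergence function. Basic open sets are then exactly as in Section~4 (a tuple $\langle (p_j, I_j) \mid j < n_p\rangle$ together with $I_p$), with no extra modulus data at all. This single move wipes out what you correctly flag as your main obstacle: there is no $g$ to build alongside $X$ in the limiting construction, because every point of $T$ automatically carries the modulus $2^{-n}$ by fiat. The only new bookkeeping is a notion of \emph{canonical form} for open sets (requiring $|\sup(I_j) - \sup(I_k)| \leq 2^{-j}$ for $j<k$, and similarly against $I_p$), which guarantees that the boundary points used below still lie in $T$.

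Second, the contradiction argument. You plan to import the full Section~4 machinery: $j$-extensions, $\infty$-extensions, and the $p_k$/$r_k$ dichotomy. The paper's argument is far more direct, precisely because the hypothetical limit now comes with a modulus. Writing $g$ for the candidate limit sequence and $f$ for its modulus, one forces $f(\epsilon) = N$ and a value for $g(N)$ (say $g(N) \leq$ midpoint$(I_p)$), and then runs a single boundary-point, half-as-big-as-possible sequence of canonical conditions $q = q_0, q_1, q_2, \ldots$ (all components pushed to their sups simultaneously, so the $2^{-n}$ constraint is preserved) until $\sup(I_{q_n})$ exceeds $g(N) + \epsilon$; if this never happened the $q_n$ would converge to a point of $p$ whose neighborhood contradicts the half-as-big choice. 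Shrinking the last component then forces $\lim(Z) > g(N) + \epsilon$ while the modulus $f$ forces $g$ to stay within $\epsilon$ of $g(N)$, hence $g \neq \lim(Z)$. No dichotomy, no separate $j$- and $\infty$-extensions, no $r_k$'s.

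Your plan is not obviously wrong, and your diagnosis of its sticking point is accurate, but you are over-engineering on both fronts: the fixed-$2^{-n}$ space together with the direct use of the hypothetical modulus yields a proof perhaps a third the length of what you outline.
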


\begin{definition} c is a convergence function for a Cauchy sequence $\langle X_j
\mid j \in \bf{N} \rangle$ if c is a decreasing sequence of
positive rationals; for all n, if j, k
$\geq$ n then $\mid X_j - X_k \mid \leq c(n)$; and lim(c(n)) = 0.
\end{definition}
Notice that convergence functions and moduli of convergence are
easily convertible to each other: if $c$ is the former, then
$d(n)$ := the least $m$ such that $c(m) \leq 2^{-n}$ is the
latter; and if $d$ is the latter, then $c(n) := 2^{-m}$, where $m$
is the greatest integer such that max($m, d(m)) \leq n$, is the
former. Therefore the current construction will be of a Cauchy
sequence $\langle X_j \mid j \in {\bf N} \rangle$ with a
convergence function but no limit. Without loss of generality, the
convergence function in question can be taken to be $c(n) =
2^{-n}$.

Let the topological space $T$ be $\{ \langle X_j \mid j \in {\bf
N} \rangle \mid \langle X_j \mid j \in \bf{N} \rangle$ is a Cauchy
sequence of Cauchy sequences with convergence function $2^{-n}$
\}. As in the previous section, for $X \in T, \; X_j$ will be the
$j^{th}$ Cauchy sequence in $X$'s first component. The real number
represented by $X_j$, i.e. $X_j$'s limit, will be written as
lim($X_j$). In the classical meta-universe, the limit of the
sequence $\langle X_j \mid j \in \bf{N} \rangle$ will be written
as lim($X$).

$T$ is a subset of the space from the previous section, and the
topology of $T$ is to be the subspace topology. That is, a basic
open set $p$ is given again by a finite sequence $\langle (p_j,
I_j) \mid j<n_p \rangle$ and an open interval $I_p$. $X \in p$ if,
again, $X_j \in (p_j, I_j)$ for each $j<n_p$, lim($X_j$) $\in I_p$
for each $j\geq n_p$, and lim($X$) $\in I_p$. $p$ and $q$ are
compatible under the same conditions as before, and $p \cap q$ is
covered by basic open sets, just as in the last theorem; the
convergence function causes no extra trouble.

Note that $q \subseteq p \; (q$ \underline{extends} $p$) if all of
the same conditions from the last section hold: $n_q \geq n_p$,
($q_j, K_j) \subseteq (p_j, I_j)$ for $j < n_p, \; K_j \subseteq
I_p$ for $j \geq n_p$, and $I_q \subseteq I_p$.

In the following, we will need to deal with basic open sets in
canonical form. The issue is the following. Suppose, in $p$, $I_0$
= (0, 1) and $I_1$ = (0, 10). Then $X_1$ could certainly contain
elements from (0, 10). However, when it comes to taking limits,
$X_1$ has 2 as an upper bound, because of $I_0$ and the
convergence function $2^{-n}$, but this is not reflected in $I_1$.

\begin{definition} p is in canonical form if, for $j < k < n_p,
\mid$sup(I$_j$) - sup(I$_k)\mid \leq 2^{-j}$, and also
$\mid$sup(I$_j$) - sup(I$_p)\mid \leq 2^{-j}$.
\end{definition}

The value of canonical form is that, if for $j < n_p$ lim($X_j) =
\sup (I_j)$ and if lim($X$) = sup($I_p$), then, although $X \not
\in p$, $X$ could still be in $T$.

\begin{proposition} Every open set is covered by open sets in canonical
form.
\end{proposition}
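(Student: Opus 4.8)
The plan is to reduce the statement to basic open sets and then, for each point of a basic open set, to produce a canonical-form basic open neighbourhood of that point that is still contained in the set.

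First I would note that it suffices to cover an arbitrary \emph{basic} open set $p$ by canonical-form basic open sets: since the basic open sets form only a sub-basis, a general open set is a union of finite intersections of basic open sets, and each such finite intersection is itself covered by basic open sets (this was recorded when $p\cap q$ was described); so a cover of every basic open set by canonical-form ones yields a cover of every open set. Fix then a basic open set $p=\langle (p_j,I_j)\mid j<n_p\rangle$ together with $I_p$, and fix $X\in p$. I will produce a canonical-form basic open set $q_X$ with $X\in q_X\subseteq p$; then $\{q_X\mid X\in p\}$ covers $p$.

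To construct $q_X$, keep $n_{q_X}=n_p$; for each $j<n_p$ replace $p_j$ by a long enough initial segment $q_j$ of $X_j$ that $\mathrm{rng}(X_j\setminus q_j)$ lies in a small neighbourhood of $\lim(X_j)$, and replace $I_j$ by an interval $I'_j\subseteq I_j$ with $\lim(X_j)$ in its interior; replace $I_p$ by an interval $I'_p\subseteq I_p$ containing $\lim(X)$ together with every $\lim(X_j)$ for $j\ge n_p$ (possible since those limits converge to $\lim(X)$, an interior point of $I_p$). With $q_j$ chosen long enough this gives $X\in q_X\subseteq p$ for \emph{any} admissible choice of the right endpoints $\sigma_j:=\sup(I'_j)\in(\lim(X_j),\sup(I_j)]$ and $\sigma_p:=\sup(I'_p)\in(S,\sup(I_p)]$, where $S:=\sup_{k\ge n_p}\lim(X_k)$. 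So everything reduces to choosing the $\sigma$'s so that $q_X$ is in canonical form, i.e.\ $|\sigma_j-\sigma_k|\le 2^{-j}$ for $j<k<n_p$ and $|\sigma_j-\sigma_p|\le 2^{-j}$ for $j<n_p$.

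Such a choice exists because the convergence function $2^{-n}$ already clusters the limits the way canonical form demands: for $j<n_p$ and any $k>j$ one has $|\lim(X_j)-\lim(X_k)|\le 2^{-j}$, hence (letting $k\to\infty$) $|\lim(X_j)-\lim(X)|\le 2^{-j}$ and $S\le\lim(X_j)+2^{-j}$, while $S\ge\lim(X)$. Concretely, fix a small $\delta>0$ and set $\sigma_p:=S+\delta$ and $\sigma_j:=\min(\sup(I_j),\max(\lim(X_j),S)+\delta)$; each canonical-form inequality can then be read off from the displayed bounds. I expect the main work to be exactly this verification, particularly when the cap $\sup(I_j)$ is active, i.e.\ $\sigma_j=\sup(I_j)<\max(\lim(X_j),S)+\delta$: there one must use $\sup(I_j)>\lim(X_j)$ together with $|\lim(X_i)-\lim(X_j)|\le 2^{-\min(i,j)}$ and $S\le\lim(X_j)+2^{-j}$ to keep $\sigma_j$ within $2^{-j}$ of its neighbours, while tracking strict versus non-strict inequalities (the intervals being open) and taking $\delta$ below the finitely many positive quantities $\sup(I_p)-S$, $\sup(I_j)-\lim(X_j)$, $\sup(I_j)-S+2^{-j}$, and $\sup(I_j)-\lim(X_i)+2^{-i}$ (for $i<j<n_p$) that enter the estimates. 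Granting this verification, $q_X$ is a canonical-form basic open set with $X\in q_X\subseteq p$, and the proposition follows.
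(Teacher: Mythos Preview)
Your reduction to basic open sets and the pointwise strategy are fine, and the construction can be made to work, but you have taken a harder road than the paper does and left the crucial verification as a sketch. The paper's device is much cleaner: rather than keeping $n_{q_X}=n_p$ and hand-picking the suprema $\sigma_j$, it allows $n_q$ to grow and takes every $J_k$ (for $k<n_q$) to be the interval of a \emph{single common radius} $r$ centred at $\lim(X_k)$, and $I_q$ the interval of the same radius about $\lim(X)$. Then $\sup(J_j)-\sup(J_k)=\lim(X_j)-\lim(X_k)$, so the canonical-form inequalities $|\sup(J_j)-\sup(J_k)|\le 2^{-j}$ and $|\sup(J_j)-\sup(I_q)|\le 2^{-j}$ drop out immediately from the convergence function, with no case analysis at all; the only work is choosing $r$ small enough and $n_q$ large enough that the resulting $q$ sits inside $p$. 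Your approach, by contrast, forces $I'_p$ to swallow all of the tail limits $\lim(X_k)$ ($k\ge n_p$), which is exactly what drags the quantity $S$ into the formula for every $\sigma_j$ and generates the capped/uncapped case split. The cases you outline do go through (the key being that $\max(\cdot,S)$ is $1$-Lipschitz and that $\sup(I_j)>\lim(X_j)$ gives the slack needed when a cap is active), but the list of $\delta$-constraints you give needs to be checked against each pair $(j,k)$ and against $\sigma_p$, which you have not actually carried out. So: correct in outline, but the paper's equal-radius trick buys you canonical form for free and is worth adopting.
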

\begin{proof}
Let $X \in p$ open. If, in $q \subseteq p$, $J_k$ is an interval
with midpoint lim($X_k$) and radius independent of $k$, and $I_q$
an interval with midpoint lim($X$) and the same radius, then $q$
will be canonical. We will construct such a $q$ containing $X$.

By way of choosing the appropriate radius, as well as $n_q$, let
$\delta$ be half the distance from lim($X$) to the closer of
$I_p$'s endpoints. Let $N \geq n_p$ be such that for all $k \geq
N$ lim($X_k$) is within $\delta$ of lim($X$). Let $r \leq \delta$
be such that for all $k < N$ (lim($X_k) - r$, lim($X_k) + r$)
$\subseteq I_k$. Let $n_q \geq N$ be such that for all $k \geq
n_q$ lim($X_k$) is within $r$ of lim($X$). For $k < n_q$ let $J_k$
be the neighborhood with center lim($X_k$) and radius $r$, and let
$q_k$ be an initial segment of $X_k$ long enough so that beyond it
$X_k$ stays within $J_k$. Let $I_q$ be the neighborhood with
center lim($X$) and radius $r$. This $q$ suffices.
\end{proof}

As always, the sets in the induced Heyting-valued model $M$ are of
the form $\{\langle \sigma_{k}, p_{k} \rangle \mid k \in K \}$,
where $K$ is some index set, each $\sigma_{k}$ is a set
inductively, and each $p_{k}$ is an open set. Note that all sets
from the ground model have canonical names, by choosing each
$p_{k}$ to be $T$ (i.e. $n_p$ = 0 and $I_p = {\mathbb R}$),
hereditarily. $M$ satisfies IZF$_{Ref}$.

We are interested in the canonical term $\{ \langle {\bar p_j}, p
\rangle \mid p$ is an open set$\}$, where ${\bar p_j}$ is the
canonical name for the sequences $\langle p_j \mid j<n_p \rangle$
from $p$. We will call this term $Z$. It should be clear that $T$
= $\|$Z is a Cauchy sequence of Cauchy sequences with convergence
function $2^{-n} \|$. Hence we need only prove
\begin{proposition} T = $\|$No Cauchy sequence equal to
lim(Z) has a modulus of convergence$\|$.
\end{proposition}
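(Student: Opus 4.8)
The plan is to argue by contradiction, in close analogy with the final propositions of the two previous sections, but now tracking a convergence function for the candidate limit. Suppose $p \subseteq \|\, g$ is a modulus of convergence for some Cauchy sequence $W$ equal to $\mathrm{lim}(Z)\,\|$. By the previous proposition (or rather its proof technique) we may pass to a subcondition that forces a specific rational value for each of finitely many entries of $W$ and of $g$; and, exactly as before, we first dispose of the easy case in which $p$ can be extended to force infinitely many values of $W$ simultaneously — then we further extend the last component to push $\mathrm{lim}(Z)$ away from the (now-determined) value of $\mathrm{lim}(W)$, contradicting $W = \mathrm{lim}(Z)$. So assume henceforth that only finitely many values of $W$ can ever be forced.

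Next I would run the oscillation construction on the last coordinate $I_p$ just as in Theorem~\ref{thm2}: build a descending sequence $p_k$ (with an auxiliary $r_k$ when the ``bottom-half'' case occurs), dovetailed so that the components shrink to reals, the $n_{p_k}$'s are unbounded, and the $I_{p_k}$'s shrink to a point, thereby producing a genuine member $X$ of $T$ in the limit. The two lemmas on $j$-extensions and $\infty$-extensions are what let us move $X_j$'s and $\mathrm{lim}(X)$ around while preserving the finitely many forced values of $W$ and $g$; here one must additionally stay inside canonical form at every stage, using the Proposition that every open set is covered by canonical ones, so that the limiting $X$ really lies in $T$ (the convergence function $2^{-n}$ is exactly what canonical form encodes). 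As in Theorem~\ref{thm2}, either the ``bottom-half'' case occurs only finitely often — so some $p_k$ is respected forever and $\mathrm{lim}(Z)$ is pinned in $I_p$'s bottom third — or it occurs infinitely often and we get the descending $r_k$ with $\mathrm{lim}(Z)$ pinned near $\mathrm{midpoint}(I_p)$; in either case a value of $W$ gets forced into $I_p$'s top half while $\mathrm{lim}(W)$ stays away from the bottom third, so the common refinement of the relevant conditions with $p_k$ (nonempty since it contains the master point $S = \bigcap_k p_k$ or $\bigcap_k r_k$) forces $W \neq \mathrm{lim}(Z)$, the desired contradiction.

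The one genuinely new ingredient over Theorem~\ref{thm2} is the presence of the modulus $g$ of $W$: we must make sure that whatever finite data about $W$ we forced (its values at the indices named by $g$) is enough to separate $W$ from $\mathrm{lim}(Z)$. Concretely, once a condition $q$ forces a value $N$ for $g(n)$ and a value for $W(N)$, then $q$ forces $|W(m) - W(N)| \le 2^{-n}$ for all $m \ge N$, hence forces $\mathrm{lim}(W)$ to within $2^{-n}$ of that rational; choosing $n$ large relative to $\mathrm{length}(I_p)$ then traps $\mathrm{lim}(W)$ in a tiny interval, which we arrange to be disjoint from $I_p$'s bottom third (resp. far from $\mathrm{midpoint}(I_p)$). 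So the modulus, far from being an obstacle, is precisely the tool that upgrades ``some value of $W$ is in the top half'' to ``$\mathrm{lim}(W)$ is bounded away from the bottom third''.

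I expect the main obstacle to be the bookkeeping of canonical form through the dovetailed $j$- and $\infty$-extensions: each time we lengthen $p_k$ or adjust an $I_j$ we must re-verify the constraints $|\mathrm{sup}(I_j) - \mathrm{sup}(I_k)| \le 2^{-j}$ and $|\mathrm{sup}(I_j) - \mathrm{sup}(I_p)| \le 2^{-j}$, so that the limiting object stays in $T$ rather than merely in the ambient space of the previous section. The lemmas of Section~5.2 are stated for that ambient space, so one has to check — as the remark ``the convergence function causes no extra trouble'' already asserts for compatibility and covers — that their proofs go through verbatim inside the canonical subspace, interleaving a canonicalization step (via the Proposition above) after each extension. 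Everything else is a routine transcription of the arguments of Theorems~\ref{thm1} and~\ref{thm2}.
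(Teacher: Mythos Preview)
Your approach is correct but takes a noticeably longer route than the paper. The paper exploits the modulus \emph{immediately}: having assumed $p \Vdash ``f$ is a modulus for the Cauchy sequence $g$'', it fixes $\epsilon < \tfrac{1}{2}\,\mathrm{length}(I_p)$, extends once to force $f(\epsilon) = N$ and $g(N)$ (say $g(N) \le \mathrm{midpoint}(I_p)$), and observes that this already traps $\lim(g)$ within $\epsilon$ of $g(N)$. From there only a \emph{one-directional} march of the last component $I_q$ is needed --- essentially the Theorem~\ref{thm1} boundary-point argument applied to $\sup(I_q)$ --- until $I_{q_n}$ reaches above $g(N)+\epsilon$, at which point a shrink forces $\lim(Z) > g(N)+\epsilon \ge \lim(g)$. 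There is no two-case dichotomy, no auxiliary $r_k$ sequence, no ``turning around'', and no preliminary split into ``infinitely many values forced'' versus not. You instead rerun the full Theorem~\ref{thm2} machinery (bottom-third versus top-half, the $p_k$/$r_k$ split), invoking the modulus only at the end to convert ``a value of $W$ lies in the top half'' into a bound on $\lim(W)$. That works, but the elaborate oscillation of Theorem~\ref{thm2} existed precisely to cope with the \emph{absence} of a modulus on the candidate limit; once a modulus is present, localizing $\lim(W)$ is a one-step affair, and the argument collapses to the simpler Theorem~\ref{thm1} pattern. Your insight in the third paragraph --- that the modulus is the tool that upgrades a single forced value to a bound on the limit --- is exactly right; the paper just uses it at the outset rather than the conclusion.
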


\begin{proof}
Suppose $p \subseteq \| f$ is a modulus of convergence for a
Cauchy sequence $g \|$, $p$ in canonical form. It suffices to show
that for some $q \subseteq p$, $q \subseteq \| g \not =$
lim($Z)\|$.

Let $\epsilon <$ (length $I_p$)/2. Let $q \subseteq p$ in
canonical form force ``$f(\epsilon) = N$"; WLOG $n_q > N$. We can
also assume (by extending again if necessary) that $q$ forces a
value for $g(N)$; WLOG $g(N) \leq$ midpoint($I_p$). Let $X \in p$
be on the boundary of $q$; that is, $X_k$ extends $q_k$ ($k <
n_q$), $X_k$ beyond length($q_k$) is a sequence through $J_k$ with
limit $\sup(J_k$), and $X$ beyond $n_q$ is a sequence through
$I_q$ with limit $\sup(I_q$) (more precisely, $\langle \lim(X_k)
\mid k \geq n_q \rangle$ is such a sequence).

(Technical aside: By the canonicity of $q$'s form, $X \in T$. But
why should $X$ be in $p$? This could fail only if $\sup(J_k) =
\sup(I_k$) or if $\sup(I_q) = \sup(I_p$). The latter case would
actually be good. The point of the current argument is to get a
condition $r$ (forcing the things $q$ forces) such that $I_r$
contains points greater than $g(N) + \epsilon$, which would fall
in our lap if $\sup(I_q) = \sup(I_p$). If $\sup(I_q) < \sup(I_p$)
and $\sup(J_k) = \sup(I_k$), then $X_k$ must be chosen so that
$\lim(X_k$) is slightly less than this sup. Could this interfere
with 2$^{-n}$ being a convergence function for $X$? No, by the
canonicity of $p$. If $l$ is another index such that $\sup(J_l) =
\sup(I_l$), then by letting $\lim(X_l$) be shy of this sup by the
same amount as for $k$ the convergence function 2$^{-n}$ is
respected (for these two indices). If $\sup(J_l) < \sup(I_l$),
then what to do depends on whether $\sup(J_k$) and $\sup(J_l$) are
strictly less than 2$^{-min(k,l)}$ apart or exactly that far
apart. In the former case, there's some wiggle room in the
$k^{th}$ slot for $\lim(X_k$) to be less than $\sup(J_k$). In the
latter, $\sup(J_l$) must be $\sup(J_k$) either increased or
decreased by 2$^{-min(k,l)}$. The first option is not possible, by
the canonicity of $p$, as $\sup(J_l) < \sup(I_l$). In the second
option, having $\lim(X_k$) be less than $\sup(J_k$) brings
$\lim(X_k$) and $\lim(X_l$) even closer together. Similar
considerations apply to comparing $\lim(X_k$) and $\lim_k(X_k) =
\sup(I_q$).)

Let $q_1$ in canonical form containing $X$ force values for
$f(\epsilon$) and $g(f(\epsilon$)). Since $q_1$ and $q$ are
compatible, they force the same such values. WLOG $q_1$ is such
that $\sup(I_{q_1}$) is big (that is, $\sup(I_{q_1}) - \sup(I_q$)
is at least half as big as possible). Continuing inductively,
define $q_{n+1}$ from $q_n$ as $q_1$ was defined from $q$.
Continue until $I_{q_n}$ contains points greater than $g(N) +
\epsilon$. This is guaranteed to happen, because, if not, the
infinite sequence $q_n$ will converge to a point $X$ in $p$. Some
neighborhood $r$ of $X$ forcing values for $f(\epsilon$) and
$g(f(\epsilon$)) will contain some $q_n$, witnessing that
$q_{n+1}$ would have been chosen with larger last component than
it was, as in the previous proofs.

Once the desired $q_n$ is reached, shrink $I_{q_n}$ to be strictly
above $g(N) + \epsilon$. Call this new condition $r$. $r$ forces
``$\lim(Z) > g(N) + \epsilon$", and $r$ also forces ``$g \leq g(N)
+ \epsilon$". So $r$ forces ``$g \not = \lim(Z$)", as desired.
\end{proof}

\section{The reals are not Cauchy complete}

\begin{theorem} \label{thm4}
IZF$_{Ref}$ does not prove that every Cauchy sequence of reals
has a limit.
\end{theorem}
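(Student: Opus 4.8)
The plan is to produce, just as in the previous two sections, a full Heyting-valued model over a topological space $T$ together with a canonical term $Z$ which the model sees as a Cauchy sequence of \emph{reals} with modulus of convergence $2^{-n}$, and then to show that no term $g$ is forced everywhere to be a Cauchy sequence of rationals with $Z(n)\to g$. Since any limit real of $Z$ would in particular hand us, via a representative, such a Cauchy sequence — and we are prepared to give the would-be limit away for free as a naked sequence, with no modulus attached — this suffices. As always the model satisfies IZF$_{Ref}$ by the meta-theorem cited as \cite{G}, so there is nothing to check there.

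For the space I would build on the space of Section 5 (Cauchy sequences of Cauchy sequences with the outer convergence function $2^{-n}$ built in, basic opens taken in canonical form), adding enough bookkeeping that the $n$-th component $Z(n)$ is forced to be a genuine real — an equivalence class of sequence–modulus pairs — rather than merely a naked Cauchy sequence, which is what caused trouble in Theorem \ref{thm1}. Concretely, each point of $T$ should carry, alongside its $n$-th Cauchy sequence $X_n$, a modulus for it; a basic open set specifies finite initial data about finitely many of the $X_n$ and their moduli, an interval controlling the tail and limit of each component, and an interval $I_p$ controlling $\lim(Z)$, all in canonical form with respect to $2^{-n}$. One then checks, exactly in the style of the Propositions of Sections 4 and 5, that below any condition a representative of $Z(n)$ can be forced, so that $\|Z(n)$ is a real$\|=T$, and hence $\|Z$ is a Cauchy sequence of reals with modulus $2^{-n}\|=T$.

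The heart of the argument is the absence of a limit. Suppose $p\subseteq\|g$ is a Cauchy sequence of rationals and $Z(n)\to g\|$. I would then run the iterated-extension construction of Sections 4 and 5, using the lemmas on $j$-extensions and $\infty$-extensions (which transfer to the present space, the extra modulus data causing no more difficulty than the convergence function did in Section 5), to drive some component of $Z$ away from the value $g$ has committed to while preserving the finitely many values of $g$ already decided. Either one eventually forces finitely many — or, after finitely many "turning around" steps, infinitely many — values of $g$ together with a separation of $\lim(Z)$ from the rational $g$ is settling near, which blows $g$ being a limit; or the construction runs forever, producing a point of $T$ at which the Cauchy-ness of $g$ (against the outer modulus of $Z$) is violated, the familiar contradiction. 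This reproduces the Case A / Case B and "second option finitely / infinitely often" dichotomies of the earlier proofs.

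The genuinely delicate point, and the one that separates this theorem from Theorem \ref{thm3}, is the following tension: one must (i) make each $Z(n)$ provably a real, so that representatives of the $Z(n)$ exist locally, and yet (ii) prevent those local representatives from being glued into a single term — a Cauchy sequence of sequence–modulus pairs indexed by $n$ — since by the easy positive result that every Cauchy sequence with modulus of sequence–modulus pairs has a limit sequence with modulus, such a glued term would immediately yield a limit for $Z$ and sink the theorem. The resolution must be that choosing a representative of $Z(n)$ amounts to committing to a "direction" in which the tail of $X_n$ approaches its limit relative to the ambient intervals, and that the turning-around phenomenon makes any choice uniform in $n$ unforceable; arranging the topology and the canonical-form constraints so that they enforce exactly this, while still leaving $Z$ Cauchy with modulus $2^{-n}$, is where I expect the real work to be.
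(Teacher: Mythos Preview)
You have put your finger exactly on the obstruction, but your proposed way around it does not work, and the paper's actual resolution is a genuinely different idea that your plan misses.

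If the points of $T$ carry moduli for the individual $X_n$'s, as you propose, then in the \emph{full} Heyting-valued model the canonical term $Z$ itself is a term, and $Z$ is, on the nose, a Cauchy sequence with modulus $2^{-n}$ of sequence--modulus pairs. By the easy positive result (1) that you yourself cite, $Z$ then has a limit sequence with modulus, and hence $[Z]$ has a limit real. Your suggestion that ``the turning-around phenomenon makes any choice uniform in $n$ unforceable'' cannot be right: $Z$ \emph{is} a uniform choice of representatives and is forced by the top condition. No amount of tinkering with the topology of a space whose points already encode representatives will remove the generic term that names those representatives from the full model. Conversely, if you drop the moduli from the space (reverting to the Section~5 space), you are back to the problem of whether each $[Z_j]$ is even an inhabited real, which your plan does not address.

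What the paper actually does is pass to a \emph{symmetric submodel}: over the Section~5 space one takes only those terms with hereditarily finite support, where ``support $J$'' means invariance under the homeomorphisms of $T$ that fix the components indexed by $J$. The point is that $Z$ (the sequence of sequences) has no finite support and is therefore \emph{not} in the submodel, whereas each $[Z_j]$ has empty support (a finite change in $Z_j$ does not change its equivalence class) and hence the sequence of reals $[Z]$ \emph{is} in the submodel. The key step is then an Extension Lemma: if $f$ has support $J$ and $q$ forces $f(n)=m$, then any $r$ agreeing with $q$ on the $J$-components can be extended to force $f(n)=m$. It follows that any Cauchy sequence $f$ in the submodel has all its values pinned down by finitely many components of the generic, so it cannot track $\lim([Z])$, which depends on infinitely many. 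This finite-support argument replaces the iterated $j$-extension/$\infty$-extension machinery you were planning to run; the latter is used only inside the proof of the Extension Lemma.

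Note also a casualty of this move: the meta-theorem from \cite{G} applies only to full models, so you cannot invoke it. IZF$_{Ref}$ must be checked for the symmetric submodel by hand; the paper sketches the hardest case (Separation) and leaves the rest.
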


As stated in the introduction, what we will actually prove will be
what seems to be the hardest version: there
is a Cauchy sequence, with its own modulus of convergence, of real
numbers, with no Cauchy sequence as a limit, even without a
modulus of convergence. Other versions are possible, such as
changing what does and doesn't have a modulus.
After all of the preceding proofs, and
after the following one, it should not be too hard for the reader to achieve any
desired tweaking of this version.

Let $T$ consist of all Cauchy sequences of Cauchy sequences, all
with a fixed convergence function of 2$^{-n}$. An open set $p$ is
given by a finite sequence $\langle (p_j, I_j) \mid j < n_p
\rangle$ as well as an interval $I_p$, with the usual meaning to
$X \in p$.

Recall from the previous section:
\begin{definition}
$p$ is in canonical form if, for each $j < k < n_p,$ $\mid
\sup(I_j) - \sup (I_k) \mid \leq 2^{-j}$. Also, $\mid \sup(I_j) -
\sup (I_p) \mid \leq 2^{-j}$.
\end{definition}

Also from the last section:
\begin{proposition}
Every open set is covered by sets in canonical form.
\end{proposition}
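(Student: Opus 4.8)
The plan is to reuse the construction behind the identically-stated proposition of the previous section, since the space $T$ and the notion of canonical form are literally the same here. It suffices to cover each basic open set $p$, and for that it suffices to exhibit, for every $X \in p$, a basic open set $q$ in canonical form with $X \in q \subseteq p$; the family of all such $q$ (as $X$ ranges over $p$) then covers $p$, and an arbitrary open set is covered by taking unions.

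The key device is to build $q$ with a \emph{single uniform radius} $r$: the $j$-th component of $q$ will be $(q_j, J_j)$ with $J_j$ the $r$-ball around $\lim(X_j)$, and $I_q$ the $r$-ball around $\lim(X)$. Then $\sup(J_j) = \lim(X_j) + r$ and $\sup(I_q) = \lim(X) + r$, so the canonical-form requirements $|\sup(J_j)-\sup(J_k)| \le 2^{-j}$ (for $j<k<n_q$) and $|\sup(J_j)-\sup(I_q)| \le 2^{-j}$ reduce to $|\lim(X_j)-\lim(X_k)| \le 2^{-j}$ and $|\lim(X_j)-\lim(X)| \le 2^{-j}$ — both of which hold automatically, since $X\in T$ has convergence function $2^{-n}$: applying the convergence inequality with the smaller index $j$ and letting the larger index run up to $k$ (resp. to the limit) gives exactly these bounds. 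Concretely: let $\delta$ be half the distance from $\lim(X)$ to the nearer endpoint of $I_p$; pick $N\ge n_p$ with $\lim(X_k)$ within $\delta$ of $\lim(X)$ for all $k\ge N$; pick $r\le\delta$ small enough that for each of the finitely many $k<n_p$ the $r$-ball around $\lim(X_k)$ lies inside $I_k$ (possible since $X\in p$ gives $\lim(X_k)\in I_k$); pick $n_q\ge N$ with $\lim(X_k)$ within $r$ of $\lim(X)$ for $k\ge n_q$; for $k<n_q$ let $q_k$ be an initial segment of $X_k$ long enough that $X_k$ stays in $J_k$ afterwards; and take $I_q$ as above.

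What remains is routine bookkeeping: $X\in q$ by construction; $q\subseteq p$ because for $k<n_p$ we arranged $J_k\subseteq I_k$ while $q_k$ extends $p_k$ with range in $I_k$, and for $n_p\le k<n_q$ and for $I_q$ itself the $\delta$-gap from the endpoints of $I_p$ together with $r\le\delta$ forces containment in $I_p$; and $q$ is canonical by the paragraph above. There is no genuine obstacle — the one point deserving a moment's attention is precisely the verification that the uniform-radius intervals meet the canonical-form inequalities, and that is exactly where the ambient space's fixed convergence function $2^{-n}$ is used, so it is that feature of $T$ which makes canonicalization possible at all.
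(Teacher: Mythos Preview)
Your approach is essentially identical to the paper's: for each $X\in p$ build a canonical $q\ni X$ with $q\subseteq p$ by centering every interval at the corresponding limit with one uniform radius $r$, so that the canonical-form inequalities reduce to the convergence-function bounds already satisfied by $X$. That key idea, and the concrete choices of $\delta$, $N$, $r$, $n_q$, match the paper's proof almost verbatim.

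There is, however, one bookkeeping slip. When you choose $r$ you impose the containment constraint only for $k<n_p$, and then claim that for $n_p\le k<n_q$ ``the $\delta$-gap from the endpoints of $I_p$ together with $r\le\delta$ forces containment in $I_p$''. That inference is valid only for $k\ge N$, where you have arranged $|\lim(X_k)-\lim(X)|<\delta$; for $n_p\le k<N$ the convergence function gives only $|\lim(X_k)-\lim(X)|\le 2^{-k}$, which for small $k$ can exceed $\delta$, so $\lim(X_k)$ may sit arbitrarily close to an endpoint of $I_p$ and the $r$-ball about it need not lie in $I_p$. (Concretely: take $n_p=0$, $I_p=(0,1)$, $\lim(X)=0.5$, so $\delta=0.25$ and $N=2$; if $\lim(X_0)=0.01$ then $J_0=(-0.24,0.26)\not\subseteq I_p$.) The paper avoids this by shrinking $r$ to accommodate \emph{all} $k<N$, not just $k<n_p$; with that one-word change your argument goes through.
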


Henceforth when choosing open sets we will always assume they are
in canonical form.

\begin{definition}
$p$ and $q$ are similar, $p \sim q$, if $n_p = n_q, I_p = I_q, I_k
= J_k$, and length($p_k$) = length($q_k$). So $p$ and $q$ have the
same form, and can differ only and arbitrarily on the rationals
chosen for their components.

If moreover $p_k = q_k$ for each $k \in J$ then we say that $p$
and $q$ are $J$-similar, $p \sim_J q$.
\end{definition}

If $p \sim q$, this induces a homeomorphism on the topological
space $T$, and therefore on the term structure. (To put it
informally, wherever you see $p_k$, or an initial segment or
extension thereof, replace it (or the corresponding part) with
$q_k$, and vice versa. This applies equally well to members of
$T$, open sets, and (hereditarily) terms.)

\begin{definition} If $p, q$, and $r$ are open sets, $\sigma$ is a
term, and $q$ and $r$ are similar, then the image of $p$ under the
induced homeomorphism is notated by $p_{qr}$ and that of $\sigma$
by $\sigma_{qr}$.
\end{definition}

\begin{lemma}
p $\Vdash \phi(\vec{\sigma})$ iff p$_{qr} \Vdash \phi(\vec{\sigma}_{qr})$.
\end{lemma}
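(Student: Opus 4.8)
The statement to prove is that if $p \sim q \sim r$ (so that the similarity-induced homeomorphism is well-defined) then $p \Vdash \phi(\vec\sigma)$ iff $p_{qr} \Vdash \phi(\vec\sigma_{qr})$. The plan is a routine but careful induction on the structure of $\phi$, exploiting that the map $X \mapsto X_{qr}$ is a homeomorphism of $T$ that is the identity outside the coordinates where $q$ and $r$ differ. First I would record the basic functoriality facts about the homeomorphism: it sends basic open sets to basic open sets (indeed $p'_{qr}$ is again in canonical form when $p'$ is, since the interval data $I_k$, $I_p$ and the lengths of the $p'_k$ are all preserved), it respects $\subseteq$ and $\cap$, it sends covers to covers, and applied to terms it commutes with the term-forming operation in the obvious sense, $\langle \sigma_{qr}, (p')_{qr}\rangle \in \sigma_{qr}$ iff $\langle \sigma, p'\rangle \in \sigma$. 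Crucially the homeomorphism is an involution-like bijection: $(p')_{qr}$ composed with $(\cdot)_{rq}$ (or rather, since $q \sim r$, applying the inverse map) returns $p'$, so it suffices to prove one direction of the biconditional and apply it to the inverse.

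With that machinery in place, the induction goes clause by clause through the definition of $\Vdash$. The atomic cases $\sigma =_M \tau$ and $\sigma \in_M \tau$ are a simultaneous sub-induction on the ranks of the terms: the definition of $p' \Vdash \sigma =_M \tau$ quantifies over $\langle \sigma_j, p_j\rangle \in \sigma$ compatible with $p'$ and asserts something about $(p' \cap p_j) \Vdash \sigma_j \in_M \tau$; applying $(\cdot)_{qr}$ to everything in sight — noting compatibility is preserved, $\cap$ is preserved, and the members of $\sigma_{qr}$ are exactly the images of members of $\sigma$ — reduces this verbatim to the corresponding statement for $\sigma_{qr}, \tau_{qr}$, and similarly for $\in_M$ using that covers map to covers. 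The propositional connectives $\wedge, \vee$ are immediate; $\rightarrow$ and $\forall$ use that the homeomorphism is a bijection on conditions below $p$ (for $\rightarrow$) and on all terms $\sigma$ (for $\forall$), so the "for all $(q',J') \le p'$" or "for all $\sigma$" quantifier is just re-indexed by the bijection; $\exists$ and $\vee$ use that it maps covers to covers. I would spell out perhaps the $\rightarrow$ case in a sentence and leave the rest with "the remaining cases are analogous."

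The one point that genuinely needs attention — and is the only place I expect any friction — is the interaction of the homeomorphism with the \emph{canonical-form} requirement and with the peculiar fact that in this space $p \cap q$ is not a basic open set but a union of basic open sets (a sub-basis, not a basis). So in the atomic clauses, when the definition refers to $(p' \cup p_j, I' \cap I_j)$ or to a cover witnessing $\in_M$, one must check that the homeomorphism, which is defined on $T$ and hence on genuine open sets, carries these unions-of-basic-sets to the corresponding unions and preserves the canonical-form bookkeeping (the $2^{-j}$ constraints on the sups). Since the similarity $p \sim q$ fixes all the interval data $I_k$ and $I_p$ and changes only the rational entries of the finite stems $p_k$, none of the sup-constraints or limit constraints are touched, so canonical form is preserved; this is really just an observation, not an argument, but it is worth stating explicitly so the reader sees why the homeomorphism is well-defined on the relevant subspace of $T$ and on its term model. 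Once that is noted, the whole lemma is bookkeeping, and I would present it compactly.
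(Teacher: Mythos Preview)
Your proposal is correct and is exactly the approach the paper takes: the paper's entire proof is the single phrase ``A straightforward induction,'' and what you have written is a careful unpacking of that induction. One small slip: you write ``if $p \sim q \sim r$,'' but in the paper's setup only $q \sim r$ is assumed and $p$ is an arbitrary open set being transported by the $q\!\sim\!r$-induced homeomorphism; also, your worries about canonical form and the sub-basis issue, while not wrong, are not actually needed for this lemma, since the homeomorphism is defined on all of $T$ and all open sets, not just basic or canonical ones.
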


\begin{proof}
A straightforward induction.
\end{proof}

\begin{definition}
$\sigma$ has support $J$ if for all $p\sim_J q$ $\bot \Vdash
\sigma = \sigma_{pq}$. $\sigma$ has finite support if $\sigma$ has
support $J$ for some finite set $J$.
\end{definition}

The final model $M$ is the collection of all terms with
hereditarily finite support.

As always, let $Z$ be the canonical term. Note that $Z$ is not in
the symmetric submodel! However, each individual member of $Z$,
$Z_j$, is, with support $\{ j \}$. Also, so is $\langle [Z_j] \mid
j \in {\bf N} \rangle$, which we will call $[Z]$, with support
$\emptyset$. (Here, for $Y$ a Cauchy sequence, $[Y]$ is the
equivalence class of Cauchy sequences with the same limit as $Y$,
i.e. the real number of which $Y$ is a representative.) That's
because no finite change in $Z_j$ affects [$Z_j$]. (Notice that
even though each member of [$Z_j$] has support $\{ j \}$,
[$Z_j$]'s support is still empty.) It will ultimately be this
sequence [$Z$] that will interest us. But first:

\begin{proposition}
M $\models$ IZF$_{Ref}$.
\end{proposition}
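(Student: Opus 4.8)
The plan is to establish $M \models \mathrm{IZF}_{Ref}$ by transferring the result from the ambient Heyting-valued model $M'$ over the topological space $T$ (which satisfies $\mathrm{IZF}_{Ref}$ by the meta-theorem of \cite{G}, as already invoked in the previous sections) to its submodel $M$ of hereditarily-finite-support terms. The two key ingredients are: (1) a \emph{transfer/closure lemma} showing that every axiom instance provable in $M'$ with finite-support parameters has witnesses that can themselves be taken to have finite support; and (2) the observation that truth of formulas with finite-support parameters is the same whether computed in $M$ or in $M'$. For (2) one argues by induction on formulas: the only nontrivial quantifier case is $\exists$, and here one needs precisely the closure lemma to know that if $M' \models \exists x\, \phi(x,\vec\sigma)$ with the $\vec\sigma$ of finite support, then some finite-support $\tau$ already witnesses it.

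The core technical work is the closure lemma: for each axiom, inspect the witness term provided in the proof that $M' \models \mathrm{IZF}_{Ref}$ (Empty Set, Infinity, Pairing, Union, Separation, Power Set, Reflection, and verifying Extensionality and Set Induction hold) and check that when the input terms have finite support, the constructed witness does too. Most cases are routine: if $\sigma,\tau$ have supports $J_\sigma, J_\tau$, then the pairing term has support $J_\sigma \cup J_\tau$, the union term has support $J_\sigma$, the Separation term carries support $J_\sigma$ together with the (finite) support of the parameters in $\phi$, and Empty Set and Infinity have empty support since the canonical names are invariant under all the homeomorphisms $p \mapsto p_{qr}$. The symmetry lemma (p $\Vdash \phi(\vec\sigma)$ iff $p_{qr} \Vdash \phi(\vec\sigma_{qr})$) is the workhorse throughout: it shows that applying an $\sim_J$-homeomorphism to a witness term produces a forced-equal term, so the support of the witness is controlled by the supports of the data.

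The two cases that deserve real care are Power Set and Reflection. For Power Set, the witness in $M'$ is the set of \emph{all} canonical subsets of $\sigma$; in $M$ we must instead take the set of canonical subsets that \emph{have finite support}, and then verify two things: that this smaller term still has finite support (it has support $J_\sigma$, since an $\sim_{J_\sigma}$-homeomorphism permutes finite-support canonical subsets of $\sigma$ among themselves), and that it still works as a power set \emph{inside $M$}, i.e.\ that every finite-support subset of $\sigma$ in $M$ is forced equal to a finite-support canonical subset. For Reflection, the witness is $\tau = \{\langle\rho,(\emptyset,\mathbb R)\rangle \mid \rho \in X$ a term$\}$ for a suitable elementary submodel $X$; the point is that one may choose $X$ to additionally contain all the finitely many relevant support-witnessing data and to be closed under the homeomorphisms $p\mapsto p_{qr}$ for $q\sim r$ in $X$ — since those homeomorphisms are themselves definable — so that $\tau$ acquires finite (indeed empty, or $J_\sigma$-bounded) support, and moreover $\tau$ reflects $\phi$ \emph{within $M$} because truth in $M$ and $M'$ agree on finite-support parameters by step (2).

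The main obstacle I expect is the delicate interplay between steps (1) and (2): step (2)'s $\exists$-case needs the closure lemma, but verifying the closure lemma for Reflection (and checking that the reflecting set genuinely reflects $M$-truth, not just $M'$-truth) appears to need step (2) again. The clean way around this is to prove step (2) \emph{simultaneously} with the closure lemma by induction on formula complexity, or alternatively to prove the closure lemma first in the purely syntactic form ``$M' \models \exists x\,\phi \Rightarrow$ a finite-support witness exists'' directly from the forcing definition and the symmetry lemma, without reference to $M$-truth at all, and only afterwards deduce step (2). I would adopt the latter route: establish the closure lemma for $\exists$ as a statement about $\Vdash$ in $M'$ alone, then bootstrap it to all axioms and to step (2). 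The remaining verifications are then bookkeeping of the kind already carried out three times in the preceding sections, so they can reasonably be compressed.
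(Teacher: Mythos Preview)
Your basic strategy---verify each axiom by checking that the natural witness term inherits finite support, with the symmetry lemma as the engine---is what the paper does. (The paper in fact sketches only Separation, calling it the toughest case, and waves at the rest.)

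However, your step (2) is false as stated, and your Reflection argument depends on it. Truth in $M$ and in the full model $M'$ does \emph{not} agree on formulas with finite-support parameters: the whole point of passing to the symmetric submodel is to kill the limit of $[Z]$. In $M'$ the generic $Z$ is available, and via the easy positive result (1) from the introduction it yields a limit sequence-with-modulus for $\langle Z_j \rangle$, hence a real that is a limit for $[Z]$; in $M$, by the proposition immediately following this one, $[Z]$ has no limit. So ``$[Z]$ has a limit'' is a formula whose only parameter has empty support, yet $M$ and $M'$ disagree on it. The flaw in your proposed induction for (2) is the $\forall$ case, not the $\exists$ case: from $p \Vdash_M \forall x\,\phi(x)$ (i.e.\ $\phi(\tau)$ is forced for all finite-support $\tau$) one cannot conclude $p \Vdash_{M'} \forall x\,\phi(x)$.

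Consequently Reflection cannot be obtained by transfer from $M'$; it must be verified directly for the $M$-forcing relation, choosing the reflecting set in the metatheory to be closed under the relevant homeomorphisms so that the resulting term has finite support and reflects $M$-truth. Drop step (2) entirely; step (1) alone, carried out axiom-by-axiom against the $M$-forcing relation, is the correct route and is what the paper (tersely) does.
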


\begin{proof}
As far as the author is aware,
symmetric submodels have been studied only in the context of
classical set theory, not intuitionistic, and, moreover, the only
topological models in the literature are full models, in which the
terms of any given model are all possible terms built on the space
in question, not submodels. Nonetheless, the same proof that the full model
satisfies IZF (easily, IZF$_{Ref}$) applies almost unchanged to
the case at hand. To keep the author honest without trying the
patience of the reader, only the toughest axiom, Separation, will
be sketched.

To this end, suppose the term $\sigma$ and formula $\phi$ have
(combined) support $J$ (where the support of a formula is the
support of its parameters, which are hidden in the notation used).
The obvious candidate for a term for the appropriate subset of
$\sigma$ is $\{ \langle \sigma_i, p \cap p_i \rangle \mid \langle
\sigma_i, p_i \rangle \in \sigma \wedge p \Vdash
\phi(\sigma_i)\}$, which will be called Sep$_{\sigma, \phi}$. We
will show that this term has support $J$.

To this end, let $q \sim_J r$. We need to show that $\bot \Vdash$
Sep$_{\sigma, \phi}$ = (Sep$_{\sigma, \phi})_{qr}$. In one
direction, any member of (Sep$_{\sigma, \phi})_{qr}$ is of the
form $\langle \sigma_i, p \cap p_i \rangle_{qr}$, where $\langle
\sigma_i, p \cap p_i \rangle \in$ Sep$_{\sigma, \phi}$, i.e.
$\langle \sigma_i, p_i \rangle \in \sigma$ and $p \Vdash
\phi(\sigma_i)$. We need to show that $(p \cap p_i)_{qr} \Vdash
(\sigma_i)_{qr} \in$ Sep$_{\sigma, \phi}$. Since $\bot \Vdash
\sigma = \sigma_{qr}$ and $(p_i)_{qr} \Vdash (\sigma_i)_{qr} \in
\sigma_{qr}, (p_i)_{qr} \Vdash (\sigma_i)_{qr} \in \sigma$. In
addition, by the lemma above, $p_{qr} \Vdash
\phi_{qr}((\sigma_i)_{qr})$ (where $\phi_{qr}$ is the result of
taking $\phi$ and applying the homeomorphism to its parameters).
Since $\phi$'s parameters have support $J$, $\bot \Vdash \phi_{qr}
= \phi$,  and $p_{qr} \Vdash \phi((\sigma_i)_{qr})$. Summarizing,
$(p \cap p_i)_{qr} \Vdash (\sigma_i)_{qr} \in \sigma \wedge
\phi((\sigma_i)_{qr})$, so $(p \cap p_i)_{qr} \Vdash
(\sigma_i)_{qr} \in $ Sep$_{\sigma, \phi}$, as was to be shown.

The other direction is similar.
\end{proof}

So there was no harm in taking the symmetric submodel. The benefit
of having done so is the following

\begin{lemma} Extension Lemma: Suppose q, r $\subseteq$ p, q
$\subseteq \| f(n) = m \|$ and for $j \in$ J (q$_j$, K$_j$) =
(r$_j$, L$_j$) (i.e. q and r agree on f's support). Then r has an
extension forcing f(n) = m.
\end{lemma}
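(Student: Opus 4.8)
The Extension Lemma is exactly where the symmetric submodel earns its keep, so I would exploit finite support together with the homeomorphisms $p \mapsto p_{qr}$ from the similarity relation. The setup is: $f$ has finite support $J$, $q \subseteq \| f(n) = m \|$, and $q$ and $r$ agree on their $j$-th components for $j \in J$. The idea is to transport the fact ``$f(n)=m$ is forced'' from a condition compatible with $q$ to one compatible with $r$ by sliding the non-$J$ coordinates, where $q$ and $r$ may disagree, via a similarity homeomorphism that fixes the $J$-coordinates.

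First I would reduce to the case where $q$ and $r$ are similar in the sense of the paper (same form: same $n_p$, same $I_p$, same intervals $I_k=J_k$, same lengths of the finite rational parts), by extending $r$ inside itself: given the actual $r \subseteq p$, I can pass to an extension $r' \subseteq r$ whose form matches some extension $q' \subseteq q$ that still forces $f(n)=m$ (using \ref{helpful lemma}-type downward persistence of forcing, and the freedom to lengthen the $p_j$'s and shrink the $I_j$'s and $I_p$), while keeping $r'$ and $q'$ equal on the $J$-coordinates — possible precisely because $q$ and $r$ already agree there, so I only adjust coordinates outside $J$ and the $I_p$-part. Now $q' \sim_J r'$. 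By the Lemma on homeomorphisms (``$p \Vdash \phi(\vec\sigma)$ iff $p_{qr} \Vdash \phi(\vec\sigma_{qr})$''), applying the homeomorphism induced by $q' \sim r'$ to $q' \Vdash f(n) = m$ gives $(q')_{q'r'} = r' \Vdash f_{q'r'}(n) = m$. But $f$ has support $J$ and $q' \sim_J r'$, so $\bot \Vdash f = f_{q'r'}$ — that is the whole point of support — and hence $r' \Vdash f(n) = m$. Since $r' \subseteq r$, this exhibits the required extension of $r$ forcing $f(n)=m$.

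The one genuine subtlety — and the step I expect to be the main obstacle — is the reduction to $q' \sim_J r'$: I need a common ``form'' for extensions of $q$ and of $r$ that simultaneously (i) is reachable from $q$ by a forcing-preserving extension, and (ii) keeps the $J$-coordinates of the two conditions literally identical, not merely similar. Making the lengths of the rational parts agree on non-$J$ coordinates is harmless (just pad the shorter with further rationals from the relevant interval), and making the intervals and $I_p$ agree is a matter of intersecting down, which only shrinks conditions and hence preserves forcing. The care needed is that all this stay in canonical form (so that the resulting conditions still determine points of $T$ with the prescribed convergence function) and that shrinking $I_p$ and the non-$J$ intervals does not accidentally disturb agreement on $J$ — it does not, since those coordinates are untouched. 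Once the forms are aligned, the homeomorphism argument and the support hypothesis do all the remaining work essentially mechanically.
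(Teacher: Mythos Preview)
Your strategy via the similarity homeomorphism is the natural first thought, and the final step is fine: if you can arrange $q' \sim_J r'$ with $q' \Vdash f(n)=m$, then the support of $f$ gives $r' \Vdash f(n)=m$ immediately. The gap is in the reduction you flag as ``the one genuine subtlety.'' The similarity relation of the paper requires the \emph{intervals} to coincide: $I_{q'} = I_{r'}$ and $K'_k = L'_k$ for every $k$, not just for $k \in J$. But for $j \notin J$ nothing prevents $K_j$ and $L_j$ from being disjoint subintervals of $I_j$ (and likewise $I_q, I_r$ disjoint in $I_p$). In that case every extension of $q$ has its $j$-th interval inside $K_j$, every extension of $r$ has its inside $L_j$, and ``intersecting down'' yields the empty set. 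So no pair $q' \subseteq q$, $r' \subseteq r$ with $q' \sim_J r'$ exists, and the homeomorphism cannot be applied directly. Broadening the notion of similarity to move intervals does not help either, since the support hypothesis on $f$ is stated only for the paper's similarity and gives no invariance under interval-shifting automorphisms.

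The paper's proof confronts exactly this disjoint-interval case. It runs a marching argument in the style of the earlier sections: starting from $q$, one repeatedly passes to a boundary point (with $\lim(X_j) = \sup(K_j)$ on the offending coordinates), picks a neighborhood of that point inside $p$ forcing a value for $f(n)$---necessarily the same value $m$, by compatibility with the previous condition---and so slides the non-$J$ intervals toward $r$'s. The support hypothesis enters not as a single global transport but locally, inside the march: when the finite sequence $(q_k)_j$ obstructs reaching the next boundary point (because of the $2^{-n}$ convergence constraint), one may freely alter it for $j \notin J$, obtaining a $J$-similar condition that still forces $f(n)=m$. After finitely many steps the resulting condition is compatible with $r$, and then a common extension does the job. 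So the similarity/support idea is essential, but it is deployed to license small repairs during a marching construction rather than as a one-shot homeomorphism.
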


\begin{proof}
Take a sequence of refinements of $q$ converging to a point $X$ on
$q$'s boundary, as follows. Consider $j < n_r, j \not \in J$. If
$K_j \cap L_j$ is non-empty, then just work within the latter set.
Else either $\sup(K_j) < \inf(L_j$), in which case let $\lim(X_j)
= \sup(K_j$), or $\inf(K_j) > \sup(L_j$), in which case let
$\lim(X_j) = \inf(K_j$). (In what follows, we will consider only
the first of those two cases.) Similarly for $I_q$ and $I_r$. As
usual, since $X \in p, \; X$ has a neighborhood forcing a value
for $f(n)$; since $X$ is on $q$'s boundary, any such neighborhood
has to force the same value for $f(n)$ that $q$ did. Let $q_1$ be
such a neighborhood where, for $j$ the smallest integer not in
$J$, $\sup((K_1)_j) - \sup(K_j$) is at least half as big as
possible.

To continue this construction, consider what would happen if
$\sup(K_1)_j < \inf(L_j$). We would like to take another point
$X$, this time on the boundary of $q_1$, with $\lim(X_j) =
\sup(K_1)_j$. The only possible obstruction is that ($q_1)_j$
might have entries far enough away from $\sup(K_1)_j$ so that the
constraint of the convergence function would prevent there being
such an $X$. In this case, change ($q_1)_j$ so that this is no
longer an obstruction. Since $j \not \in J$, the new condition is
$J$-similar to the old, and so will still force the same value for
$f(n)$.

Repeat this construction, making such that each of the finitely
many components $j < n_r$, $j \not \in J$ and the final component
get paid attention infinitely often (meaning $\sup((K_{n+1})_j) -
\sup((K_n)_j)$ is at least half as big as possible). This produces
a sequence $q_n$. Eventually $q_n$ will be compatible with $r$. If
not, let $X$ be the limit of the $q_n$'s. If $X \not \in r$ then,
for some component $j$, $\lim(X_j) < \inf(L_j$). $X$ has a
neighborhood, say $q_{\infty}$, forcing $f(n) = m$. At some large
enough stage at which $j$ gets paid attention, the existence of
$q_{\infty}$ would have made the $j^{th}$ component of the next
$q_n$ contain $\lim(X_j$), a contradiction.
\end{proof}

With the Extension Lemma in hand, the rest of the proof is easy.
It should be clear that [$Z$] has convergence function 2$^{-n}$.
So it remains only to show
\begin{proposition} $\|$ [Z] has no limit $\|$ = T.
\end{proposition}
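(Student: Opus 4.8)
The plan is a reductio along the lines of the proof of Theorem~\ref{thm3}, with the Extension Lemma supplying what the modulus $f$ supplied there. First I would reduce the statement to one about a bare Cauchy sequence: if some nonempty open set forces ``$[Z]$ has a limit'', then, unwinding the existential, some nonempty open set forces ``$y$ is a real and $[Z]\to y$'' for a specific term $y$, and since in the symmetric model such a $y$ is an inhabited equivalence class, a further refinement forces ``$\langle g,h\rangle\in y$'' for a finite-support Cauchy-sequence term $g$, hence forces ``$g$ is a Cauchy sequence of rationals and $[Z]\to[g]$''. So it suffices to show that no nonempty open set forces this for any finite-support $g$. Suppose toward a contradiction that $p$ does; put $p$ in canonical form, let $J$ be the finite support of $g$ and $J^{*}=\max J$, and --- choosing, among the conditions in the relevant cover, one whose limit interval sits near the middle of $I_{p}$ --- arrange that $I_{p}$ has room of size more than $10\epsilon$ on either side of where the limit of $Z$ will be forced to lie, for an $\epsilon$ to be fixed below.

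Next I would extract a handful of forced facts. Fix a positive rational $\epsilon$ with $100\epsilon<\min(2^{-J^{*}},\ \mathrm{length}(I_{p}))$; the bound by $2^{-J^{*}}$ is exactly what prevents the finitely many frozen support-components of $g$ from blocking the coming construction. From ``$[Z]\to[g]$'' pass to a refinement $q\subseteq p$, with $I_{q}$ still central in $I_{p}$, forcing a particular value $N$ with $q\Vdash\forall k\ge N\ |[Z_{k}]-[g]|\le\epsilon$; from ``$g$ is Cauchy'' extend further to a canonical $q'$, with $n_{q'}$ large, forcing a value $g(K)=a$ together with $\theta_{0}:\ \forall l\ge K\ |g(l)-a|\le\epsilon$, so that $q'\Vdash |[g]-a|\le\epsilon$. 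By the left--right symmetry of the whole construction we may assume $a\le\mathrm{midpoint}(I_{p})$, so that $I_{p}$ extends well below $a-10\epsilon$.

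Then comes the main step. Starting from $q'$ and \emph{never touching any component of index in $J$}, I would run the ``march toward a target, turning around at the endpoints of $I_{p}$ if necessary'' construction of the proof of Theorem~\ref{thm3}, carried out downward on the limit component (with the non-$J$ finite components and the newly appearing components dragged along so as to keep canonical form and the convergence function $2^{-n}$ intact), until the limit interval lies strictly below $a-10\epsilon$; the inequality $100\epsilon<2^{-J^{*}}$ guarantees that such a target is reachable without violating the constraints imposed by the frozen (and canonical) components of index in $J$, all of whose limits are tied to within $2^{-j}\ge 2^{-J^{*}}$ of the limit of $Z$. Call the resulting condition $r$; so $r\subseteq p$, $r$ agrees with $q'$ on every component of index in $J$, and $r\Vdash\mathrm{lim}(Z)<a-10\epsilon$. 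Now apply the Extension Lemma --- in the form, noted in the text, that preserves finitely many forced facts of support contained in $J$ simultaneously, here the three statements ``$g$ is a Cauchy sequence'', $\theta_{0}$, and ``$\forall k\ge N\ |[Z_{k}]-[g]|\le\epsilon$'' --- to get a nonempty $r'\subseteq r$ forcing all three.

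Finally the contradiction: at any node $Y\in r'$ one has $[g]^{Y}\ge a-\epsilon$ (from ``$g$ Cauchy'' and $\theta_{0}$); choosing $k\ge N$ with $2^{-k}<\epsilon$, the value $[Z_{k}]^{Y}=\mathrm{lim}(Y_{k})$ lies within $2^{-k}<\epsilon$ of $\mathrm{lim}(Y)<a-10\epsilon$ (since $2^{-n}$ is a convergence function for $[Z]$), hence $[Z_{k}]^{Y}<a-9\epsilon$; but ``$\forall k\ge N\ |[Z_{k}]-[g]|\le\epsilon$'' forces $[Z_{k}]^{Y}\ge[g]^{Y}-\epsilon\ge a-2\epsilon$ --- impossible. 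So $r'$ is empty, contradicting $r'\ne\emptyset$; hence no $p$ forces ``$[Z]$ has a limit'', i.e. $\|[Z]\text{ has a limit}\|=\emptyset$ and so $\|[Z]\text{ has no limit}\|=T$. I expect the real work to be the main step: executing the march while simultaneously freezing $g$'s (finite!) support, staying below $p$, and respecting the fixed convergence function $2^{-n}$ --- this is precisely what forces $\epsilon$ to be taken small relative to $2^{-J^{*}}$ --- together with checking that the Extension Lemma, stated for a single value ``$f(n)=m$'', really does cover a finite conjunction of statements of support $\subseteq J$, including the universally quantified statement expressing convergence.
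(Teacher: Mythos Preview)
Your approach is the paper's approach --- finite support plus the Extension Lemma --- but the paper dispatches the proposition in two sentences: since any candidate $f$ has finite support $J$, the Extension Lemma shows that all of $f$'s values are already determined by the $J$-components, whereas $\lim([Z])$ is affected by infinitely many components; hence $f\ne\lim([Z])$. Your proposal is essentially an attempt to make those two sentences precise, and that is where things get tangled.

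Two remarks. First, the march in your main step is redundant with the Extension Lemma. The Extension Lemma already lets you pass from $q'$ to \emph{any} $r\subseteq p$ agreeing with $q'$ on the $J$-components --- its proof \emph{is} such a march --- so you may simply write down an $r$ with $I_r$ below $a-10\epsilon$ (your bound $100\epsilon<2^{-J^*}$ is exactly what guarantees such an $r$ is nonempty and sits under $p$) and invoke the lemma once.

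Second, your stated worry is justified and is a genuine gap as written. The Extension Lemma is proved for statements of the form ``$f(n)=m$'': a neighbourhood of the boundary point forces \emph{some} value for $f(n)$, and compatibility with the previous condition pins it down to $m$. That mechanism does not extend to arbitrary universally quantified statements, even ones whose parameters have support contained in $J$ (consider ``$[Z_k]>0$'' for $k\notin J$: the parameter $[Z_k]$ has empty support, yet the statement is certainly not preserved when $L_k$ is moved below $0$). Fortunately you do not need it to. Your statement ``$g$ is Cauchy'' and your statement ``$\forall k\ge N\,|[Z_k]-[g]|\le\epsilon$'' (indeed $|[Z_k]-[g]|\le 2^{-k}$, directly from the convergence function and $[g]=\lim([Z])$) are already forced by $p$, hence by any $r'\subseteq p$; no Extension Lemma is needed for them. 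For $\theta_0$, use the modulus $h$ you yourself extracted in the reduction: force the \emph{atomic} facts $h(\epsilon)=K$ and $g(K)=a$ at $q'$, carry those through the Extension Lemma to $r'$, and then $\theta_0$ follows at $r'$ from ``$h$ is a modulus for $g$'', which is again forced by $p$. With that adjustment your argument goes through and fleshes out exactly what the paper's two sentences gesture at.
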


\begin{proof}
Suppose $p \subseteq \| f$ is a Cauchy sequence $\|$. It suffices
to find a $q \subseteq p$ such that $q \subseteq \| f \not =
\lim([Z]) \|$.

By the Extension Lemma, all of $f$'s values are determined by
$f$'s finite support $J$. So $f$ cannot be a limit for [$Z$], as
any such limit has to be affected by infinitely many components.
\end{proof}

\section{Questions}
There is a variant of the questions considered nestled between the
individual Cauchy sequences of the big Cauchy sequence being
adorned with a modulus of convergence and not. It could be that
each such sequence has a modulus of convergence, but the sequence
is not paired with any modulus in the big sequence. Looked at
differently, perhaps the big sequence is one of Cauchy sequences
with moduli of convergence but not uniformly. Certainly this extra
information would not weaken any of the positive results. Would it
weaken any of the negative theorems though? Presumably not:
knowing that each of the individual sequences has a modulus doesn't
seem to help to build a limit sequence or a modulus for such, if
there's no way you can get your hands on them. Still, in the
course of trying to prove this some technical difficulties were
encountered, so the questions remain open.

The negative results here open up other hierarchies. Starting with
the rationals, one could consider equivalence classes of Cauchy
sequence with moduli of convergence. By the last theorem, that may
not be Cauchy complete. So equivalences classes can be taken of
sequences of those. This process can be continued, presumably into
the transfinite. Is there a useful structure theorem here? All of
this can be viewed as taking place inside of the Dedekind reals,
which are Cauchy complete. There is a smallest Cauchy complete set
of reals, namely the intersection of all such sets. As pointed out
to me by the referee, this could be a proper subset of the Dedekind
reals, since that is the case in the topological model of
\cite{FH}. Naturally enough, the same is also the case in the
Kripke model of \cite{LR}. Is there any interesting structure
between the Cauchy completion of the rationals and the Dedekind
reals? What about the corresponding questions for other notions of
reals, such as simply Cauchy sequences sans moduli?

As indicated in the introduction, the first two models, one
topological and the other Kripke, are essentially, even if not
substantially, different. What is the relation between the two?

In the presence of Countable Choice, all of the positive results
you could want here are easily provable (e.g. every Cauchy
sequence has a modulus of convergence, the reals are Cauchy
complete, etc.). Countable Choice itself,
though, is a stronger principle than necessary for this, since,
as pointed out to me by Fred
Richman, these positive results are true under classical logic,
but classical logic does not imply Countable Choice. Are there
extant, weaker choice principles that would suffice instead?
Can the exact amount of choice necessary be specified? These
questions will start to be addressed in the forthcoming
\cite{LRi}, but there is certainly more that can be done than is
even attempted there.

\end{document}